\newcommand\widesim[1]{\ThisStyle{%
 \setbox0=\hbox{$\SavedStyle#1$}%
 \stackengine{-.1\LMpt}{$\SavedStyle#1$}{%
 \stretchto{\scaleto{\SavedStyle\mkern.2mu\sim}{.4\wd0}}{.4\ht0}%
 }{O}{c}{F}{T}{S}%
}}
\newtheorem{thm}{Theorem}[section]
\newtheorem{lem}[thm]{Lemma}
\newtheorem{prop}[thm]{Proposition}
\newtheorem{cor}[thm]{Corollary}
\theoremstyle{definition}
\theoremstyle{remark}
\newtheorem{remark}[thm]{Remark}
\newtheorem{remarks}[thm]{Remarks}
\newtheorem{example}[thm]{Example}
\numberwithin{equation}{section}
\renewcommand{\theparagraph}{\thesubsection.\arabic{paragraph}}
\newcommand{\K}{{\mathbb K}} \newcommand{\N}{{\mathbb N}}
 \newcommand{\R}{{\mathbb R}}
 \newcommand{\C}{{\mathbb C}}
\newcommand{\gtp}{{\mathfrak p}} \newcommand{\gtq}{{\mathfrak q}}
\newcommand{\gtm}{{\mathfrak m}} \newcommand{\gtn}{{\mathfrak n}}
\newcommand{\gta}{{\mathfrak a}} \newcommand{\gtb}{{\mathfrak b}}
\newcommand{\gtP}{{\mathfrak P}} \newcommand{\gtQ}{{\mathfrak Q}}
\newcommand{\Fhaz}{{\EuScript F}}
\newcommand{\Jhaz}{{\EuScript J}}
\newcommand{\Pp}{{\EuScript P}}
\newcommand{\Ss}{{\EuScript S}}
\newcommand{\Tt}{{\EuScript T}}
\newcommand{\Uu}{{\EuScript U}}
\newcommand{\Hw}{{\EuScript H}_w}
\newcommand{\an}{{\EuScript O}}
\newcommand{\mer}{{\EuScript M}}
\newcommand{\I}{{\EuScript I}}
\newcommand{\ceros}{{\EuScript Z}}
\newcommand{\Reg}{\operatorname{Reg}}
\newcommand{\Sing}{\operatorname{Sing}}
\newcommand{\qf}{\operatorname{qf}}
\newcommand{\Int}{\operatorname{Int}}
\newcommand{\cl}{\operatorname{Cl}}
\newcommand{\supp}{\operatorname{supp}}
\newcommand{\Spec}{\operatorname{Spec}}
\newcommand{\id}{\operatorname{id}}
\newcommand{\x}{{\tt x}} \newcommand{\y}{{\tt y}} 
\newcommand{\z}{{\tt z}} \renewcommand{\t}{{\tt t}}
\newcommand{\w}{{\tt w}}
\newcommand{\ol}{\overline}
\numberwithin{equation}{section}
\renewcommand\thesubsection{\thesection.\Alph{subsection}}
\begin{document}
\title[Normalization of complex analytic spaces from a global viewpoint]{Normalization of complex analytic spaces\\ from a global viewpoint}

\author{Francesca Acquistapace}
\author{Fabrizio Broglia}
\address{Dipartimento di Matematica, Universit\`a degli Studi di Pisa, Largo Bruno Pontecorvo, 5, 56127 PISA (ITALY)}
\email{francesca.acquistapace@unipi.it, fabrizio.broglia@unipi.it}

\author{Jos\'e F. Fernando}
\address{Departamento de \'Algebra, Facultad de Ciencias Matem\'aticas, Universidad Complutense de Madrid, 28040 MADRID (SPAIN)}
\email{josefer@mat.ucm.es}

\date{10/10/2017}
\subjclass[2010]{32C20, 32C05, 14P15 (primary); 58A07, 32Q35 (secondary)}
\keywords{Stein space, normalization, completion, inverse limit, real underlying structure, complexification, coherence}

\thanks{Authors supported by Spanish GRAYAS MTM2014-55565-P. First and second authors are also supported by ``National Group for Algebraic and Geometric Structures, and their Applications'' (GNSAGA - INdAM) and Italian MIUR. Third author is also supported by Grupos UCM 910444. This article is the fruit of the close collaboration of the authors in the last fifteen years and has been mainly written during a two-months research stay of third author in the Dipartimento di Matematica of the Universit\`a di Pisa. Third author would like to thank the department for the invitation and the very pleasant working conditions.}

\begin{abstract}--- \ 
In this work we study some algebraic and topological properties of the ring $\an(X^\nu)$ of global analytic functions of the normalization $(X^\nu,\an_{X^\nu})$ of a reduced complex analytic space $(X,\an_X)$. If $(X,\an_X)$ is a Stein space, we characterize $\an(X^\nu)$ in terms of the (topological) completion of the integral closure $\ol{\an(X)}^\nu$ of the ring $\an(X)$ of global holomorphic functions on $X$ (inside its total ring of fractions) with respect to the usual Fr\'echet topology of $\ol{\an(X)}^\nu$. This shows that not only the Stein space $(X,\an_X)$ but also its normalization is completely determined by the ring $\an(X)$ of global analytic functions on $X$. This result was already proved in 1988 by Hayes-Pourcin when $(X,\an_X)$ is an irreducible Stein space whereas in this paper we afford the general case. We also analyze the real underlying structures $(X^\R,\an_X^\R)$ and $(X^{\nu\,\R},\an_{X^\nu}^\R)$ of a reduced complex analytic space $(X,\an_X)$ and its normalization $(X^\nu,\an_{X^\nu})$. We prove that the complexification of $(X^{\nu\,\R},\an_{X^\nu}^\R)$ provides the normalization of the complexification of $(X^\R,\an_X^\R)$ if and only if $(X^\R,\an_X^\R)$ is a coherent real analytic space. Roughly speaking, coherence of the real underlying structure is equivalent to the equality of the following two combined operations: (1) normalization + real underlying structure + complexification, and (2) real underlying structure + complexification + normalization.
\end{abstract}
\maketitle

\section{Introduction}\label{s1}

In this paper we analyze the algebraic and topological relation between the Fr\'echet algebras $\an(X)$ and $\an(X^\nu)$ of global analytic functions on a reduced complex analytic space $(X,\an_X)$ and on its normalization $((X^\nu,\an_{X^\nu}),\pi)$. Algebraic operations as integral closure and topological operations as completion will have significant roles. Given a commutative ring $A$ we denote $\ol{A}^\nu$ the integral closure of $A$ in its total ring of fractions $Q(A)$. 

Assume first that $X\subset\C^n$ carries an algebraic structure and let $(X^\mu\subset\C^{n+m},\rho)$ denote its algebraic normalization. The ring $\Pp(X^\mu)$ of polynomial functions on $X^\mu$ is (isomorphic to) the integral closure $\ol{\Pp(X)}^\nu$ of the ring $\Pp(X)$ of polynomial functions on $X$ (in the ring of rational functions on $X$). The ring $\ol{\Pp(X)}^\nu$ is in addition a finitely generated $\Pp(X)$-module and a reduced $\C$-algebra. We endow $X$ and $X^\mu$ with their natural analytic structures, that is, we consider the reduced complex analytic spaces $(X,\an_X:=\an_{\C^n}|_X)$ and $(X^\mu,\an_{X^\mu}:=\an_{\C^{n+m}}|_{X^\mu})$. Zariski Main Theorem is equivalent to the following statement: \em $\an_{X^\mu,y}$ is an integrally closed domain for each $y\in X^\mu$ \em (see \cite[\S V.6]{mo}). As a consequence of the latter fact, one can prove that $\an(X^\mu)$ is (isomorphic to) the integral closure $\overline{\an(X)}^\nu$ of $\an(X)$ in its total ring of fractions. More precisely, one has the following result.

\begin{thm}[Zariski Theorem]\label{algebraiccase}
Let $X\subset\C^n$ be an algebraic set. Then the $\C$-algebras $\ol{\an(X)}^\nu$ and $\an(X^\mu)$ are isomorphic, the tuple $(X^\mu,\an_{X^\mu},\rho)$ is isomorphic to the (analytic) normalization $((X^\nu,\an_{X^\nu}),\pi)$ of $(X,\an_X)$ and $\an(X^\nu)$ is a finitely generated $\an(X)$-module.
\end{thm}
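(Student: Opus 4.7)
\emph{Proof plan.} My plan is to reduce all three claims to the local input provided by the cited form of Zariski's Main Theorem, namely that $\an_{X^\mu,y}$ is a normal local noetherian domain for every $y\in X^\mu$. The three assertions are tightly coupled: once $(X^\mu,\an_{X^\mu},\rho)$ is identified with the analytic normalization, the rest becomes a comparison of the global algebra $\an(X^\mu)$ with both the $\an(X)$-module structure induced by $\rho$ and the integral closure $\ol{\an(X)}^\nu$ inside a common total ring of fractions. The main technical obstacle is this first identification, which rests on a global application of the analytic Zariski Main Theorem together with the universal property of the analytic normalization.

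I would first verify that $(X^\mu,\an_{X^\mu},\rho)$ \emph{is} the analytic normalization of $(X,\an_X)$. By ZMT each stalk $\an_{X^\mu,y}$ is an integrally closed local domain, so $(X^\mu,\an_{X^\mu})$ is a normal complex analytic space. The map $\rho$, coming from the algebraic normalization, is a finite polynomial map and hence finite surjective holomorphic; it restricts to a biholomorphism over the regular locus of $X$ because algebraic and analytic structures coincide at smooth points. These are precisely the characterizing properties of the analytic normalization, yielding a canonical isomorphism $((X^\nu,\an_{X^\nu}),\pi)\cong(X^\mu,\an_{X^\mu},\rho)$ over $X$.

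For the finite generation of $\an(X^\nu)$, I would pick generators $g_1,\ldots,g_k$ of $\Pp(X^\mu)=\ol{\Pp(X)}^\nu$ as a $\Pp(X)$-module; viewed as analytic functions they are integral over $\pi^*\an(X)\subset\an(X^\mu)$. The associated $\an_X$-morphism $\an_X^k\to\rho_*\an_{X^\mu}$ is a morphism of coherent sheaves, since $\rho$ is finite holomorphic. Its stalk at $x\in X$ is the map $\an_{X,x}^k\to\prod_{y\in\rho^{-1}(x)}\an_{X^\mu,y}$; the flatness of the analytic local ring $\an_{X,x}$ over the algebraic localization $\Pp(X)_{\gtm_x}$ identifies the target with $\Pp(X^\mu)\otimes_{\Pp(X)}\an_{X,x}$, and the fact that $g_1,\ldots,g_k$ generate $\Pp(X^\mu)$ over $\Pp(X)$ gives surjectivity stalkwise. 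Cartan's Theorem B on the Stein space $X$ then promotes this to a surjection of global sections $\an(X)^k\twoheadrightarrow\an(X^\mu)$.

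Finally, for $\ol{\an(X)}^\nu\cong\an(X^\mu)$, I would observe that $\pi$ induces a bijection of irreducible components and is a generic biholomorphism on each, so that $\pi^*$ identifies $Q(\an(X))$ with $Q(\an(X^\mu))$ and both rings sit inside a common total ring of fractions. The inclusion $\an(X^\mu)\subseteq\ol{\an(X)}^\nu$ is then immediate from the finite generation established above. Conversely, any $f\in\ol{\an(X)}^\nu$ satisfies a monic equation with coefficients in $\an(X)\subset\an(X^\mu)$, so at each $y\in X^\mu$ the germ $f_y$ is integral over $\an_{X^\mu,y}$; normality of this local ring (Step~1) forces $f_y\in\an_{X^\mu,y}$, and patching over $y\in X^\mu$ yields $f\in\an(X^\mu)$, completing the identification.
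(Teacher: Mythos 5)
Your proposal is essentially sound and, at bottom, follows the same route as the paper: both arguments reduce everything to a stalkwise comparison of $\rho_*\an_{X^\mu}$ with the $\an_X$-module generated by the finitely many algebraic generators of $\ol{\Pp(X)}^\nu$ over $\Pp(X)$, and then globalize on the Stein space $X$ via Cartan's Theorem B / \cite[VIII.A.Thm.15]{gr2}. The packaging differs: you identify $(X^\mu,\rho)$ with the analytic normalization at the outset by checking the characterizing properties (normal stalks from ZMT, finite surjective holomorphic, biholomorphic over $\Reg(X)$), whereas the paper obtains that identification only at the end, as a corollary of $\ol{\an(X)}^\nu\cong\an(X^\mu)$ together with Theorem \ref{main} (a finitely generated $\an(X)$-module is closed, hence complete, so $\an(X^\mu)=\cl(\ol{\an(X)}^\nu)\cong\an(X^\nu)$, and a Stein space is determined by its algebra of global functions). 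Your ordering is arguably cleaner, and it makes the final inclusion $\ol{\an(X)}^\nu\subset\an(X^\mu)$ immediate from integrality plus normality of the stalks.

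The one place where your argument is genuinely thin is the identification $(\rho_*\an_{X^\mu})_x=\prod_{y\in\rho^{-1}(x)}\an_{X^\mu,y}\cong\Pp(X^\mu)\otimes_{\Pp(X)}\an_{X,x}$, which you attribute to ``flatness of $\an_{X,x}$ over $\Pp(X)_{\gtm_x}$''. Flatness alone does not produce this isomorphism: there is a natural map between these two finite $\an_{X,x}$-modules, but to see that it is bijective one must also know that the polynomial ideal $\I(X^\mu)$ generates the full analytic ideal sheaf of $X^\mu$ at every point --- this is the paper's equation \eqref{magic}, proved via excellence of the rings $(\C[\x]/\I(X))_{\gtm}$ and passage to completions --- so that $\widehat{\an_{X^\mu,y}}\cong\widehat{\Pp(X^\mu)_{\gtn_{y}}}$. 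Only then does comparing the two modules after completion at $\gtm_x$, followed by faithfully flat descent along $\an_{X,x}\hookrightarrow\widehat{\an_{X,x}}$, yield the claimed isomorphism. This completion argument is where the bulk of the paper's proof actually lives, so it should be spelled out rather than absorbed into the word ``flatness''; as stated, your step is true but not justified by the reason you give.
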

 
This fact is no longer true in general when $(X,\an_X)$ is a reduced complex analytic space, even if $(X,\an_X)$ is a $1$-dimensional Stein space, as it is shown in \cite[\S1]{hp} exhibiting an explicit counterexample. Alternatively, in Example \ref{dim2} we provide a $2$-dimensional Stein space $(X,\an_X)$ such that $\ol{\an(X)}^\nu\neq\an(X^\nu)$.

In this paper we prove that if $(X,\an_X)$ is a reduced Stein space, the $\C$-algebra $\an(X^\nu)$ is the completion as a metric space of the integral closure $\overline{\an(X)}^\nu$ of $\an(X)$ in its total ring of fractions. The latter coincides with the ring $\mer(X)$ of meromorphic functions on $X$ because $(X,\an_X)$ is a Stein space \cite[52.17, 53.1]{kk}. The space $\an(X^\nu)$ is endowed with the natural metric topology of uniform convergence on compact sets \cite[8.3]{gare}. As the sheaf $\an_{X^\nu}$ is coherent and $X^\nu$ is separable, $\an(X^\nu)$ with such topology is a Fr\'echet space \cite[VIII.A.Thm.8]{gr2}. The inclusion $\ol{\an(X)}^\nu\hookrightarrow\an(X^\nu)$ (see Theorem \ref{normalization0}) endows $\ol{\an(X)}^\nu$ with the induced (metric) topology. If $S\subset\an(X^\nu)$, the closure $\cl(S)$ coincides with the completion of $S$ as a metric space, so we can recover $\cl(S)$ from $S$ without referring to the ambient space $\an(X^\nu)$. In general, $\ol{\an(X)}^\nu$ needs not to be complete \cite[\S1]{hp} and if such is the case $\ol{\an(X)}^\nu\neq\an(X^\nu)$. However, as announced above, we show in Section \ref{s3} the following result.

\begin{thm}[Density of the integral closure]\label{main}
Let $(X,\an_X)$ be a reduced Stein space. Then $\an(X^\nu)$ is the completion of the (metric) space $\ol{\an(X)}^\nu$, or equivalently, $\ol{\an(X)}^\nu$ is a dense subset of $\an(X^\nu)$. 
\end{thm}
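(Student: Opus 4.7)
Since $\pi$ is finite and $(X,\an_X)$ is Stein, the normalization $(X^\nu,\an_{X^\nu})$ is Stein too and $\an(X^\nu)$ is a Fr\'echet algebra in the topology of uniform convergence on compact sets. Density of $\ol{\an(X)}^\nu$ amounts to: given $g\in\an(X^\nu)$, a compact $K\subset X^\nu$ and $\veps>0$, producing $h\in\ol{\an(X)}^\nu$ with $\|g-h\|_K<\veps$. My plan combines coherence of the direct image $\pi_*\an_{X^\nu}$ on the Stein base (a consequence of finiteness of $\pi$), a Cayley--Hamilton argument producing local integral dependence relations for $g$, and Oka--Weil approximation to globalize these relations.

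\textbf{Step 1 (Local integrality).} Enlarging $K$, I may assume $K=\pi^{-1}(L)$ for some $\an(X)$-convex compact $L\subset X$. Cartan's Theorem A and compactness of $L$ produce finitely many global sections $\phi_1,\dots,\phi_m\in\an(X^\nu)$ that generate the coherent $\an_X$-module $\pi_*\an_{X^\nu}$ on some Stein open neighborhood $V$ of $L$ in $X$. Cartan's Theorem B on $V$ then yields, for the given $g$,
\[g\phi_j=\sum_kb_{jk}\phi_k,\qquad 1=\sum_jc_j\phi_j\qquad\text{on }\pi^{-1}(V),\]
with $b_{jk},c_j\in\an(V)$. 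The matrix $B=(b_{jk})$ has monic characteristic polynomial $\chi_B(T)\in\an(V)[T]$ of degree $m$; Cayley--Hamilton applied to the finitely generated faithful $\an(V)$-module $M:=\sum_k\an(V)\phi_k\subset\an(\pi^{-1}(V))$---faithful because $1\in M$---gives $\chi_B(g)=0$ on $\pi^{-1}(V)$. So $g$ satisfies a local monic polynomial of degree $m$ with coefficients in $\an(V)$.

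\textbf{Step 2 (Globalization via Oka--Weil).} Because $L$ is $\an(X)$-convex inside the Stein open $V\subset X$, Oka--Weil approximates each $b_{jk}\in\an(V)$ and also the expansion coefficients $a_k\in\an(V)$ of $g=\sum_ka_k\phi_k$ (again from Cartan B on $V$) by global functions $\tilde b_{jk},\tilde a_k\in\an(X)$, uniformly on $L$. Setting $h:=\sum_k\tilde a_k\phi_k\in\an(X^\nu)$ gives $\|g-h\|_K<\veps/2$. To upgrade $h$ to a member of $\ol{\an(X)}^\nu$, form the approximated monic polynomial $\tilde\chi(T)\in\an(X)[T]$ by Oka--Weil-approximating the coefficients of $\chi_B(T)$: then $\tilde\chi(g)$ is small on $\pi^{-1}(L)\supset K$. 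A Hensel/Newton-type argument on the Fr\'echet algebra $\an(X^\nu)$---exploiting Stein-ness of $X^\nu$ and solvability of the $\bar\partial$-equation---corrects $h$ to a genuine root $\tilde h\in\an(X^\nu)$ of $\tilde\chi$ within distance $\veps$ of $g$ on $K$. By construction $\tilde h\in\ol{\an(X)}^\nu$.

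\textbf{Main obstacle.} The decisive step is the Hensel/Newton correction: Oka--Weil easily approximates the coefficients of the monic polynomial satisfied locally by $g$, but producing an actual element of $\ol{\an(X)}^\nu$ close to $g$ requires controlled root-perturbation in the Fr\'echet algebra $\an(X^\nu)$. Simple-root arguments suffice on the open where $\chi_B$ has distinct roots (equivalently, where $\pi$ is unramified), but the analysis at the ramification locus and the global patching is delicate. One may alternatively try to reduce to the Hayes--Pourcin irreducible case via the decomposition $X^\nu=\bigsqcup_iX_i^\nu$ into normalizations of the irreducible components of $X$. Example~\ref{dim2}, which exhibits $\ol{\an(X)}^\nu\subsetneq\an(X^\nu)$, confirms that taking closure is essential.
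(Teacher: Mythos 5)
Your Step 1 is sound: the Cayley--Hamilton argument does show that $g|_{\pi^{-1}(V)}$ is integral over $\an(V)$, and $h=\sum_k\tilde a_k\phi_k$ does approximate $g$ on $K$. The genuine gap is the ``upgrade'' at the end of Step 2, which you flag as the main obstacle but do not resolve --- and it is not a technical afterthought, it is the whole theorem. First, $h$ itself lies only in $\an(X^\nu)$, not in $\ol{\an(X)}^\nu$: the generators $\phi_k$ of $\pi_*\an_{X^\nu}$ near $L$ are global sections of $\an(X^\nu)$ but need not be integral over $\an(X)$, and the relations $g\phi_j=\sum_kb_{jk}\phi_k$, $1=\sum_jc_j\phi_j$ hold only over $V$, so the $\an(X)$-module $\sum_k\an(X)\phi_k$ is neither stable under multiplication by $h$ nor does it contain $1$. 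Second, the proposed Hensel/Newton correction fails where $g$ is a multiple root of your $\chi_B$ --- which is unavoidable, since it happens along the whole non-normal locus and indeed wherever $\deg\chi_B=m$ exceeds the number of points of the fibre of $\pi$. There a perturbation of size $\delta$ of the coefficients moves the roots by $\delta^{1/\mu}$ along a branched cover, and the perturbed monic polynomial $\tilde\chi\in\an(X)[\t]$ (whose coefficients are moreover uncontrolled outside $L$) need not admit \emph{any} root in $\an(X^\nu)$, let alone one close to $g$ on $K$: its root locus is in general an irreducible ramified cover of $X$ with no holomorphic section. Example \ref{dim2}, far from merely ``confirming that taking closure is essential,'' exhibits a $g\in\an(X^\nu)$ satisfying no global monic equation over $\an(X)$ at all, so no single-polynomial argument of this kind can produce the approximant.

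For comparison, the paper never finds roots; it clears denominators. Lemma \ref{closure} identifies the Fr\'echet closure of an $\an(X)$-submodule ${\mathfrak N}\subset H^0(X,\Fhaz)$ with the set of sections $A$ such that for every compact $K$ there is $H\in\an(X)$ with $\{H=0\}\cap K=\varnothing$ and $HA\in{\mathfrak N}$, whence the inverse-limit description of Proposition \ref{inverse}; the theorem then reduces to the algebraic isomorphism $\Ss_K^{-1}\ol{\an(X)}^\nu\cong\Tt_{K^*}^{-1}\an(X^\nu)$ of Theorem \ref{norexcell}, proved by localizing at the points of $K$ and using excellence and finiteness of the integral closure of the resulting local rings. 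If you wish to keep your approximation-theoretic framework, the statement you must actually prove is: for each $g\in\an(X^\nu)$ and each compact $K\subset X$ there exists $H\in\an(X)$ nonvanishing on $K$ with $Hg\in\ol{\an(X)}^\nu$. Your Step 1 data (integrality of $g$ over $\an(V)$) is the right starting point, but the passage from integrality over $\an(V)$ to integrality of $Hg$ over $\an(X)$ is precisely the content of Theorem \ref{norexcell} and requires an argument you have not supplied.
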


The result above was proved in \cite {hp} only for irreducible Stein spaces. Our proof in the general case is quite different and is obtained as a consequence of a user-friendly description (as an inverse limit) of the closure of an $\an(X)$-submodule ${\mathfrak N}$ of the $\an(X)$-module of global sections $H^0(X,\Fhaz)$ of a coherent sheaf of $\an_X$-modules $\Fhaz$ (the $\an(X)$-module $H^0(X,\Fhaz)$ is endowed with its natural Fr\'echet topology \cite[VIII.A.Thm.8]{gr2}). More precisely, let $K\subset X$ be a compact set and let $\Ss_K$ be the set of holomorphic functions on $X$ whose zero-sets do not meet $K$. The set $\Ss_K$ is a multiplicatively closed set, which may contain zero divisors if $X$ is not irreducible. We call $\Ss_K^{-1}{\mathfrak N}$ the \em module of fractions of ${\mathfrak N}$ associated with $K$\em. If $K_1\subset K_2\subset X$ are compact sets, then $\Ss_{K_2}\subset\Ss_{K_1}$ and 
$$
\rho_{K_1,K_2}:\Ss_{K_2}^{-1}{\mathfrak N}\to\Ss^{-1}_{K_1}{\mathfrak N},\ \frac{F}{H}\mapsto\frac{F}{H}
$$ 
is a homomorphism of $\an(X)$-modules, which may be non-injective if $X$ is not irreducible. Obviously if $K,K'\subset X$ are compact sets, their union $K'':=K\cup K'$ is a compact subset of $X$ that contains both. Thus, the multiplicatively closed sets $\Ss_K$ allow us to represent $\cl({\mathfrak N})$ as the following inverse limit.

\begin{prop}\label{inverse}
The closure $\cl({\mathfrak N})$ is (isomorphic to) the inverse limit $
{\displaystyle\lim_{\substack{\longleftarrow\\K\subset X\\\text{compact}}}}\Ss_K^{-1}{\mathfrak N}$ of the inverse system 
$$
{\mathfrak S}:=\Big\{\{\Ss_K^{-1}{\mathfrak N}\}_{\substack{K\subset X\\\text{compact}}},\{\rho_{K_1,K_2}\}_{\substack{K_1\subset K_2\subset X\\\text{compact}}}\Big\}.
$$
In addition, if $\{K_\ell\}_{\ell\geq1}$ is an exhaustion of $X$ by compact sets, then $\cl({\mathfrak N})\cong\displaystyle\lim_{\substack{\longleftarrow\\\ell\geq1}}\Ss_{K_\ell}^{-1}{\mathfrak N}$.
\end{prop}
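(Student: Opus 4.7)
The plan is to construct a natural $\an(X)$-module morphism
\[
\Phi \colon \varprojlim_{K}\Ss_K^{-1}\mathfrak{N} \longrightarrow H^0(X,\Fhaz),
\]
prove it is injective, and identify its image with $\cl(\mathfrak{N})$; the reduction of the inverse limit to one over an exhaustion $\{K_\ell\}_{\ell\geq 1}$ will then follow from cofinality of such a sequence inside the directed set of compact subsets of $X$.

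To define $\Phi$, observe that any representative $f_K = F_K/H_K \in \Ss_K^{-1}\mathfrak{N}$ yields a genuine section $F_K/H_K$ of $\Fhaz$ on the open subset $U_K := X\setminus Z(H_K) \supseteq K$, since $H_K$ is nowhere zero on $U_K$. Given a compatible family $(f_K)$ and $K_1\subseteq K_2$, unwinding the localization relation produces $G\in\Ss_{K_1}$ with $G(H_{K_2}F_{K_1}-H_{K_1}F_{K_2})=0$ in $\mathfrak{N}$, which forces the corresponding local sections to coincide on $(U_{K_1}\cap U_{K_2})\setminus Z(G)$, an open set still containing $K_1$. Gluing along an exhaustion yields a well-defined global section, which I take to be $\Phi((f_K))$. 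For injectivity, if $\Phi((f_K))=0$ then $F_K$ vanishes on $U_K$, and since $\Fhaz$ is coherent the annihilator sheaf $\operatorname{Ann}(F_K)\subseteq\an_X$ is a coherent ideal whose zero locus is contained in $Z(H_K)$ and hence disjoint from $K$. Cartan's Theorem A on the Stein space $X$ then furnishes a global section $G\in\operatorname{Ann}(F_K)\cap\Ss_K$, so $GF_K=0$ and $f_K=0$ in $\Ss_K^{-1}\mathfrak{N}$.

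For the inclusion $\operatorname{Im}(\Phi)\subseteq\cl(\mathfrak{N})$, take the exhaustion to consist of $\an(X)$-convex compacta and write $\Phi((f_K))=f=F_\ell/H_\ell$ on a neighborhood of $K_\ell$. The Oka--Weil approximation theorem on Stein spaces furnishes $G_{\ell,n}\in\an(X)$ converging uniformly to $1/H_\ell$ on $K_\ell$. Since $\mathfrak{N}$ is an $\an(X)$-module, $G_{\ell,n}F_\ell\in\mathfrak{N}$, and these converge to $f$ in the $K_\ell$-seminorm; a standard diagonal extraction yields a sequence in $\mathfrak{N}$ converging to $f$ in the Fr\'echet topology on $H^0(X,\Fhaz)$.

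The reverse inclusion $\cl(\mathfrak{N})\subseteq\operatorname{Im}(\Phi)$ is where I expect the \emph{main obstacle}. Given $f\in\cl(\mathfrak{N})$ one must exhibit, for each compact $K_\ell$, a pair $(F_\ell,H_\ell)\in\mathfrak{N}\times\Ss_{K_\ell}$ with $H_\ell f=F_\ell$ on a neighborhood of $K_\ell$, thereby promoting the merely approximate Fr\'echet density of $\mathfrak{N}$ into an exact multiplicative equation. The natural strategy is to pick an approximating sequence $G_n\in\mathfrak{N}$ with $\|G_n-f\|_{K_{\ell+1}}$ decaying extremely fast, and to construct $H_\ell$---for instance as a convergent infinite product of global holomorphic functions nonvanishing on $K_\ell$ but rapidly damping the differences elsewhere---so that the telescoping series $H_\ell f = H_\ell G_1 + \sum_n H_\ell(G_{n+1}-G_n)$ converges globally to an element of $\mathfrak{N}$. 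Controlling this construction should require coherence of $\Fhaz$, Cartan's Theorems A and B on the Stein space $X$, and the holomorphic convexity of the exhausting compacta. Once the $(F_\ell,H_\ell)$ are in place, compatibility of $(F_\ell/H_\ell)_\ell$ under the transition maps $\rho_{K_i,K_j}$ and the identity $\Phi((F_\ell/H_\ell)_\ell)=f$ are formal, completing the proof.
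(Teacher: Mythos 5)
Your overall architecture (construct $\Phi$ from the inverse limit into $H^0(X,\Fhaz)$, prove injectivity, identify the image with $\cl({\mathfrak N})$, then invoke cofinality of an exhaustion) is reasonable, and the two directions you actually carry out are essentially fine: injectivity via the coherent annihilator sheaf of $F_K$ together with approximation of $1$ near $K$, and $\im(\Phi)\subset\cl({\mathfrak N})$ via Oka--Weil approximation of $1/H_\ell$ on holomorphically convex compacta. However, the step you yourself flag as the main obstacle, $\cl({\mathfrak N})\subset\im(\Phi)$, is where the entire content of the proposition sits, and the strategy you sketch for it cannot work as stated. In the telescoping series $H_\ell f=H_\ell G_1+\sum_n H_\ell(G_{n+1}-G_n)$ every partial sum lies in ${\mathfrak N}$, but the series converges only in the Fr\'echet topology of $H^0(X,\Fhaz)$; since ${\mathfrak N}$ is an arbitrary $\an(X)$-submodule and need not be closed (this is precisely the situation the proposition is designed for), the limit lands a priori only in $\cl({\mathfrak N})$ --- which is circular. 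No choice of damping factors $H_\ell$, infinite products or otherwise, repairs this: the difficulty is not convergence of the series but membership of its sum in ${\mathfrak N}$.

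The paper resolves exactly this point in Lemma \ref{closure}, by an algebraic argument rather than an analytic one. Take $K$ holomorphically convex. By Cartan's Theorem A there exist $A_1,\ldots,A_r\in{\mathfrak N}$ generating the coherent sheaf ${\mathfrak N}\an_X$ on a neighborhood $\Omega$ of $K$. The finitely generated submodule ${\mathfrak M}:=(A_1,\ldots,A_r)\an(X)$ is closed in $H^0(X,\Fhaz)$ (Cartan), hence the transporter ideal $({\mathfrak M}:\cl({\mathfrak N}))$ is a closed ideal of $\an(X)$ (Forster). Since ${\mathfrak N}$ and $\cl({\mathfrak N})$ generate the same stalks at every point, this ideal has full stalk $\an_{X,x}$ for every $x\in\Omega$, so by the approximation theorem on the Oka--Weil neighborhood $\Omega$ one finds a global $H\in({\mathfrak M}:\cl({\mathfrak N}))$ close to $1$ on $K$; then $H\in\Ss_K$ and $Hf\in{\mathfrak M}\subset{\mathfrak N}$ for \emph{every} $f\in\cl({\mathfrak N})$ simultaneously. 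This produces the exact denominator your construction requires; without it (or an equivalent argument) your proof has a genuine gap at its decisive step.
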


As a straightforward application of Proposition \ref{inverse} (making ${\mathfrak N}=\an(X)$), we write the ring $\an(X)$ as the inverse limit of the rings of fractions $\Ss_K^{-1}\an(X)$ where each $K\subset X$ is a compact set. 

\begin{cor}\label{inverse-c0}
The ring $\an(X)$ is (isomorphic to) the inverse limit ${\displaystyle\lim_{\substack{\longleftarrow\\K\subset X\\\text{compact}}}}\Ss_K^{-1}\an(X)$ of the directed system 
$$
{\mathfrak S}:=\Big\{\{\Ss_K^{-1}\an(X)\}_{\substack{K\subset X\\\text{compact}}},\{\rho_{K_1,K_2}\}_{\substack{K_1\subset K_2\subset X\\\text{compact}}}\Big\}.
$$
In addition, if $\{K_\ell\}_{\ell\geq1}$ is an exhaustion of $X$ by compact sets, then $\an(X)\cong\displaystyle\lim_{\substack{\longleftarrow\\\ell\geq1}}\Ss_{K_\ell}^{-1}\an(X)$.
\end{cor}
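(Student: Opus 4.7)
The plan is to apply Proposition \ref{inverse} directly, taking the coherent sheaf of $\an_X$-modules $\Fhaz := \an_X$ (coherent by Oka's theorem) and the $\an(X)$-submodule $\mathfrak{N} := \an(X) = H^0(X,\an_X)$ of itself. Since $\mathfrak{N}$ coincides with the ambient Fr\'echet space $H^0(X,\an_X)$, its closure is trivially $\cl(\mathfrak{N}) = \an(X)$, and Proposition \ref{inverse} yields
$$
\an(X) \cong \lim_{\substack{\longleftarrow\\ K\subset X\\ \text{compact}}} \Ss_K^{-1}\an(X)
$$
as $\an(X)$-modules. Because $\mathfrak{N}=\an(X)$ is actually a ring, the natural localization maps $\an(X)\to\Ss_K^{-1}\an(X)$ and the transition maps $\rho_{K_1,K_2}$ are all ring homomorphisms, so the isomorphism above is automatically a ring isomorphism.

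For the second assertion, I would verify that an exhaustion $\{K_\ell\}_{\ell\geq 1}$ of $X$ by compact sets is cofinal in the directed system of all compact subsets of $X$ ordered by inclusion: indeed, every compact $K\subset X$ is contained in some $K_\ell$ by the exhaustion property. Since inverse limits are invariant under restriction to a cofinal subsystem, the previous isomorphism restricts to
$$
\an(X)\cong \lim_{\substack{\longleftarrow\\ \ell\geq 1}}\Ss_{K_\ell}^{-1}\an(X).
$$

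There is essentially no genuine obstacle in this corollary: it is a direct specialization of Proposition \ref{inverse} to $\Fhaz = \an_X$ and $\mathfrak{N} = \an(X)$. The only points worth a brief sentence of justification are (i) that the $\an(X)$-module isomorphism coming from Proposition \ref{inverse} respects the multiplicative structure, which is immediate since all the maps involved are ring homomorphisms of localizations, and (ii) the standard categorical fact that inverse limits over cofinal subsystems agree with the full inverse limit.
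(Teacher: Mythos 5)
Your proposal is correct and matches the paper exactly: the authors obtain this corollary precisely as a straightforward application of Proposition \ref{inverse} with ${\mathfrak N}=\an(X)$, where $\cl(\an(X))=\an(X)$ since it is the whole Fr\'echet space, and the exhaustion statement is already built into Proposition \ref{inverse} via cofinality. Your added remark that the module isomorphism is in fact a ring isomorphism is a harmless and correct clarification.
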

We call $\Ss_K^{-1}\an(X)$ the \em ring of fractions of $\an(X)$ associated to $K$\em. Corollary \ref{inverse-c0} generalizes the fact that
\begin{equation}\label{anxeq1}
\an(X)=\bigcap_{\substack{K\subset X\\\text{compact}}}\Ss_K^{-1}\an(X)
\end{equation}
if $X$ is irreducible. To prove \eqref{anxeq1} from Corollary \ref{inverse-c0} one uses that each ring of fractions $\Ss_K^{-1}\an(X)$ is a subring of $\mer(X)$, that each homomorphism $\rho_{K_1,K_2}:\Ss_{K_2}^{-1}\an(X)\to\Ss^{-1}_{K_1}\an(X)$ is injective if $K_1\subset K_2\subset X$ and the following well-known remark.
\begin{remark}\label{limint}
Let $\{A_i\}_{i\in I}$ be a family of subgroups of a group $A$ such that for each pair $i,j\in I$ there exists $k\in I$ such that $A_k\subset A_i\cap A_j$. We consider the partial order $\leq$ on $I$ given by: \em $i\leq k$ if and only if $A_k\subset A_i$\em, and the family of inclusion homomorphisms ${\tt j}_{ik}:A_k\hookrightarrow A_i$ if $i\leq k$. Observe that ${\tt j}_{ii}=\id_{A_i}$ and ${\tt j}_{ik}={\tt j}_{i\ell}\circ{\tt j}_{\ell k}:A_k\hookrightarrow A_\ell\hookrightarrow A_i$ if $i\leq \ell\leq k$. Thus, the pair $(\{A_i\}_{i\in I},\{{\tt j}_{ik}\}_{i\leq k})$ is an inverse system of groups and inclusion homomorphisms. The inverse limit $\displaystyle\lim_{\substack{\longleftarrow\\i\in I}}A_i$ is the intersection $A:=\bigcap_{i\in I}A_i$ together with the inclusion homomorphisms ${\tt j}_i:A\hookrightarrow A_i$ for each $i\in I$.\qed
\end{remark}

If $X$ is not irreducible, the homomorphisms $\rho_{K_1,K_2}:\Ss_{K_2}^{-1}\an(X)\to\Ss^{-1}_{K_1}\an(X)$ may not be injective and the rings of fractions $\Ss_K^{-1}\an(X)$ may not be subrings of $\mer(X)$, because the multiplicatively closed set $\Ss_K$ may contain zero divisors, so the intersection works no longer and inverse limit is required for the description of $\an(X)$. 

The customary description of Stein spaces as being those complex spaces that have ``sufficiently many'' global holomorphic functions \cite[VII]{gr2} attains precision from a theorem of Forster/Igusa/Iwahashi/Remmert \cite{of,ig,i,r} stating: \em $(X,\an_X)$ is Stein if and only if the map $\chi:X\to\Spec_c(\an(X)),\ x\mapsto\chi_x$, that maps a point $x\in X$ to the evaluation homomorphism $\chi_x:\an(X)\to\C,\ f\mapsto f(x)$ at such point, is a homeomorphism\em. Recall that $\Spec_c(\an(X))$ is the set of continuous $\C$-algebra homomorphisms $\varphi:\an(X)\to\C$. Consequently, a complex analytic space $(X,\an_X)$ is Stein if and only if there exist enough global holomorphic functions on $X$ to enable $X$ to be regained topologically from the continuous spectrum of these functions, that is, $(X,\an_X)$ is completely determined by its $\C$-algebra $\an(X)$ of global holomorphic functions. In addition, a reduced complex analytic space is Stein if and only if its normalization is Stein \cite{n3}. Thus, we conclude that if $(X,\an_X)$ is a reduced Stein space, its $\C$-algebra $\an(X)$ of global analytic functions {\em determines} both spaces $(X,\an_X)$ and $(X^\nu,\an_{X^\nu})$.

We include in \ref{ssas} for the sake of completeness a proof of Zariski Theorem \ref{algebraiccase} that follows the ideas developed in this work. Such result compares the algebraic structure of an algebraic set and its underlying complex analytic structure. In a similar way we may compare the real underlying structures of a complex analytic space and its normalization. A celebrated approach to complex algebraic curves (which can be understood as particular cases of Stein spaces) is the study of their real underlying structures from which the concept of \em Riemannian surface \em arises. There are certain properties that are preserved when considering the real underlying structure of a complex analytic space: local regularity, local irreducibility, local normality, etc. 

We analyze the behavior of the normalization of a reduced complex analytic space $(X,\an_X)$ when considering the real underlying structure $(X^\R,\an_X^\R)$. By \ref{uras} and \ref{complexification} there exists a \em complexification \em of $(X^\R,\an_X^\R)$, that is, a complex analytic space $(\widesim{X^\R},\an_{\widesim{X}^\R})$ endowed with an anti-involution $\sigma:\widesim{X^\R}\to\widesim{X^\R}$ such that $X^\R$ is the set of fixed points of $\sigma$. If $(X^\nu,\an_{X^\nu},\pi)$ is the normalization of $(X,\an_X)$ and $(X^{\nu\,\R},\an_{X^\nu}^\R)$ is its real analytic structure, we complexify the real analytic morphism $\pi^\R:X^{\nu\,\R}\to X^\R$ to obtain a complex analytic morphism $\widesim{\pi^\R}:\widesim{X^{\nu\,\R}}\to{\widesim{X^\R}}$, where $(\widesim{X^{\nu\,\R}},\an_{\widesim{X^\nu}^\R})$ is a complexification of $(X^\nu,\an_{X^\nu})$, see \ref{com}. We have the following commutative diagram. 
$$
\xymatrix{
{X^\nu}\ar[d]_\pi\ar@{<->}[r]&{X^{\nu\,\R}}\ar[d]_{\pi^\R}\ar@{^{(}->}[r]&{\widesim{X^{\nu\,\R}}}\ar[d]_{\widesim{\pi^\R}}\\
X\ar@{<->}[r]&X^\R\ar@{^{(}->}[r]&{\widesim{X^\R}}
}
$$
Our next result determines when the tuple $(\widesim{X^{\nu\,\R}},\an_{\widesim{X^{\nu\,\R}}},\widesim{\pi^\R})$ is the normalization of $({\widesim{X^\R}},\an_{\widesim{X^\R}})$ in terms of the coherence of the real analytic space $(X^\R,\an_{X^{\R}})$.

\begin{thm}[Real underlying structure of the normalization]\label{neighnorm0}
The following assertions are equivalent:
\begin{itemize}
\item[(i)] The real analytic space $(X^\R,\an_{X^{\R}})$ is coherent.
\item[(ii)] For each point $a\in X$ the irreducible components of the germ $X_a$ remain irreducible in a neighborhood of $a$. 
\item[(iii)] The triple $(\widesim{(X^{\nu})^\R},\an_{\widesim{(X^{\nu})^\R}},\widesim{\pi^\R})$ is the normalization of $(\widesim{X^\R},\an_{\widesim{X^\R}})$ after shrinking $\widesim{(X^{\nu})^\R}$ and $\widesim{X^\R}$ if necessary.
\end{itemize}
\end{thm}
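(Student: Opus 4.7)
My plan is to establish (i)$\Leftrightarrow$(ii) using classical criteria from real analytic geometry and then (ii)$\Leftrightarrow$(iii) via a local analysis at each point $a\in X$.

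For (i)$\Leftrightarrow$(ii), the relevant classical fact is that $\an_X^\R$ is coherent at $a$ precisely when the germwise complexification of $X^\R$ near $a$ extends to a genuine complexification on a whole neighborhood of $a$. Locally one identifies $\widesim{X^\R}$ with a complex analytic subset of a product $U\times\ol{U}$ into which $X^\R$ embeds antidiagonally via $z\mapsto(z,\ol{z})$; coherence of the defining ideal in such a product is equivalent to a simultaneous representability of the germ components $X_a\times\ol{X_a}$, which in turn is equivalent to the irreducible components of $X_a$ themselves extending to irreducible complex analytic subsets in a common neighborhood of $a$, i.e.\ to (ii). I would present this by combining Whitney--Bruhat style complexification with the local primary decomposition of $\an_{X,a}$.

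For (ii)$\Rightarrow$(iii), assume that near each $a\in X$ there is a neighborhood $U$ with $X\cap U=X_1\cup\cdots\cup X_k$ and each $X_i$ irreducible. The analytic normalization separates the $X_i$, so $\pi^{-1}(U)=X_1^\nu\sqcup\cdots\sqcup X_k^\nu$ with each $X_i^\nu$ normal. Since the normalization of a normal germ is trivial and complexification commutes with disjoint unions, the local complexification $\widesim{(X_i^\nu)^\R}$ is normal and agrees, after shrinking, with the local normalization of $\widesim{X_i^\R}$. Assembling these identifications over the $X_i$ and over a covering of $X$ shows that the global morphism $\widesim{\pi^\R}$ realizes the normalization of $\widesim{X^\R}$ up to the allowed shrinking, yielding (iii).

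For (iii)$\Rightarrow$(ii), one reads off irreducible components from normalization: the connected components of $(\widesim{\pi^\R})^{-1}(V)$ for a small neighborhood $V$ of a point over $a$ are in bijection with the irreducible components of $\widesim{X^\R}$ near that point, and pulling back along the antidiagonal embedding $X\hookrightarrow\widesim{X^\R}$ yields an irreducible decomposition of $X\cap U$ whose germ at $a$ is the decomposition of $X_a$, which is exactly (ii). The principal obstacle I anticipate is the precise local identification of $\widesim{X^\R}$ as a complex analytic subset of a product $U\times\ol{U}$, together with careful handling of the ``up to shrinking'' clause in (iii) and of the zero-divisor issues that arise when $X$ is reducible at $a$; both are standard in spirit but technically delicate.
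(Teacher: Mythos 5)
Your outline has the right architecture --- (i)$\Leftrightarrow$(ii) by classical criteria (the paper also just cites \cite[III.2.15]{gmt} and \cite[Prop.V.\S1.8]{n} for this), and the equivalence with (iii) by localizing at each $a\in X$ --- but the step you dismiss as routine is in fact the entire content of the theorem. In (ii)$\Rightarrow$(iii) you write that ``the normalization of a normal germ is trivial and complexification commutes with disjoint unions,'' and conclude that $\widesim{(X_i^\nu)^\R}$ is normal and agrees with the normalization of $\widesim{X_i^\R}$. Neither of the two facts you invoke gives this. What is actually needed is: (a) if $\an_{X,x}$ is a normal integral domain then $\an_{X,x}^\R\otimes_\R\C$ is a normal integral domain (so that the complexification of the real structure of $X^\nu$ is a normal \emph{complex} space), and (b) if $\an_{X,x}$ is a domain with $\pi^{-1}(x)=\{y\}$, then $\an_{X^\nu,y}^\R\otimes_\R\C$ is the integral closure of $\an_{X,x}^\R\otimes_\R\C$ in its field of fractions. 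These are Lemmas \ref{normal88} and \ref{normal76} (assembled in Theorem \ref{localnormalization}), and their proofs occupy all of Section \ref{s4}: one must introduce the algebra $\an_{\C^n,x}\ol{\an}_{\C^n,x}$, prove that $\gtp\ast\ol{\gtq}$ and $(\gtp\ast\ol{\gtq})^e$ are prime (Lemmas \ref{cuts}--\ref{primality0}), compute $\hgt(\gtp^\R)=2\hgt(\gtp)$, and pass through excellence and completions to transfer normality. Passing from ``real'' to ``complexified real'' structure is not a formal operation, and the normality of $\an_{X^\nu,y}^\R$ is genuinely a theorem, not an instance of ``normalization of a normal object is trivial.''

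Two further points. First, your argument does not locate where coherence is actually used beyond producing a local decomposition: the essential use in the paper is that coherence forces $\an_{X,x}^\R=(\an_{X,x}^\R)^{rr}$, so that the minimal primes of $\an_{\widesim{X^\R},x}$ are exactly the $\gtp_i^\R\otimes_\R\C$ coming from the components of $X_a$ (Corollary \ref{rradical}); without this, $\widesim{X^\R}$ acquires extra local irreducible components and $\widesim{\pi^\R}$ cannot be the normalization --- an argument that never confronts this would ``prove'' (iii) unconditionally, contradicting (iii)$\Rightarrow$(i). Second, the globalization you describe as ``assembling these identifications'' is itself nontrivial: one must first arrange $\widesim{\pi^\R}$ to be proper and surjective on suitable neighborhoods (Lemma \ref{neighcomplex}, via Remmert's theorem) and then promote the stalkwise equality $\widesim{\pi^\R}_*(\an_{\widesim{X^{\nu\,\R}}})_x=\rho_*(\an_Z)_x$ at points of $X^\R$ to an equality of coherent sheaves on a neighborhood using \cite[\S3.Prop.~2]{c2}. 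As written, the proposal records the statement of the theorem at each stage rather than a proof of it.
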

Roughly speaking the previous result determines under what extent the operations of normalization and complexification commute via considering the real underlying structure (in the proper place), that is,
$$
\left\{\!\!
\begin{array}{l}
\hspace*{3mm}\text{ real structure}\\ 
+\text{ complexification}\\ 
+\text{ normalization} 
\end{array}\!\!\right\}
= 
\left\{\!\!
\begin{array}{l}
\hspace*{3mm}\text{ normalization}\\ 
+\text{ real structure}\\ 
+\text{ complexification}
\end{array}\!\!\right\}
\iff 
\left\{\!\!
\begin{array}{c}
\text{real structure}\\ 
\text{is coherent}
\end{array}\!\!\right\}
$$
or in other words
$$
\Big(\widesim{X^\R}\Big)^\nu =\widesim{(X^\nu)^\R}\iff X^\R\text{ is coherent}.
$$

\subsection*{Structure of the article}
The article is organized as follows. In Section \ref{s2} we present all basic terminology and notations used in this article as well as some basic results concerning holomorphic and anti-holomorphic functions on complex analytic spaces. The reading can be started directly in Section \ref{s3} and referred to the Section \ref{s2} of `Basic facts' only when needed. In Section \ref{s3} we prove Theorems \ref{algebraiccase} and \ref{main} and Proposition \ref{inverse}. To that end, given a Stein space $(X,\an_X)$ we represent the ring $\an(X)$ as an inverse limit of excellent rings (Corollary \ref{inverse-c0} and Theorem \ref{excell0}) and the ring $\an(X^\nu)$ as an inverse limit of normal excellent rings (Corollary \ref{inverse-c0} and Theorem \ref{norexcell}). In addition, we provide an explicit $2$-dimensional Stein space $(X,\an_X)$ such that $\ol{\an(X)}^\nu\neq\an(X^\nu)$ (Example \ref{dim2}). In Section \ref{s5} we analyze the real underlying structure of the normalization of a complex analytic space in order to prove Theorem \ref{neighnorm0}. The proof of this result requires a deep knowledge of some local properties of the real underlying structure of a complex analytic set explored in Section \ref{s4}.

\section{Basic facts on real and complex analytic spaces}\label{s2}

We collect next notations and basic facts that are recurrent in the article.

\subsection{Notations and general terminology}\label{below1}

In the following \em holomorphic \em refers to the complex case whereas \em analytic \em to the real case. For a further reading about complex analytic spaces we refer to \cite{gr2} while we remit the reader to \cite{gmt} for the theory of real analytic spaces. We denote the elements of $\an(X)$ with capital letters if $(X,\an_X)$ is a complex analytic space and with small letters if $(X,\an_X)$ is a real analytic space. We will use freely Remmert's Theorem \cite[VII.\S2.Thm.2]{n} that states: \em The image of a proper holomorphic map between complex analytic spaces is a complex analytic set\em. 

Denote the coordinates in $\C^n$ with $z:=(z_1,\ldots,z_n)$ where $z_i:=x_i+\sqrt{-1}y_i$. Consider the conjugation $\ol{\,\cdot\,}:\C^n\to\C^n,\ z\mapsto\ol{z}:=(\ol{z_1},\ldots,\ol{z_n})$ of $\C^n$, whose set of fixed points is $\R^n$. A subset $S\subset\C^n$ is \em invariant \em if $\ol{S}=S$. Let $\Omega\subset\C^n$ be an invariant open set and $F:\Omega\to\C$ a holomorphic function. We say that $F$ is \em invariant \em if $F(z)=\ol{F(\ol{z})}$ for each $z\in\Omega$. This implies that $F$ restricts to a real analytic function on $\Omega\cap\R^n$. Conversely, if $f$ is analytic on $\R^n$, it can be extended to an invariant holomorphic function $F$ on some invariant open neighborhood $\Omega\subset\C^n$ of $\R^n$. 

\subsection{Real and imaginary parts}
Write the tuple $z:=(z_1,\ldots,z_n)\in\C^n$ as $z=x+\sqrt{-1}y$ where $x:=(x_1,\ldots,x_n)$ and $y:=(y_1,\ldots,y_n)$, so we identify $\C^n$ with $\R^{2n}$. If $F:\Omega\to\C$ is a holomorphic function, $F(x+\sqrt{-1}y):=\Re^*(F)(x,y)+\sqrt{-1}\Im^*(F)(x,y)$ where 
$$
\Re^*(F)(x,y):=\frac{F(z)+\ol{F(z)}}{2}\quad\text{and}\quad\Im^*(F)(x,y):=\frac{F(z)-\ol{F(z)}}{2\sqrt{-1}}
$$
are real analytic functions on $\Omega\equiv\Omega^\R$ understood as an open subset of $\R^{2n}$. Assume in addition that $\Omega$ is invariant. Then
$$
\Re(F):\Omega\to\C,\ z\mapsto\frac{F(z)+\ol{F(\ol{z})}}{2}\quad\text{and}\quad\Im(F):\Omega\to\C,\ z\mapsto\frac{F(z)-\ol{F(\ol{z})}}{2\sqrt{-1}}
$$ 
are invariant holomorphic functions that satisfy $F=\Re(F)+\sqrt{-1}\,\Im(F)$. We have
$$
\Re^*(F)=\Re^*(\Re(F))-\Im^*(\Im(F))\quad\text{and}\quad \Im^*(F)=\Im^*(\Re(F))+\Re^*(\Im(F)),
$$
so it is convenient not to confuse the pair of real analytic functions $(\Re^*(F),\Im^*(F))$ on $\Omega^\R$ with the pair of invariant holomorphic functions $(\Re(F),\Im(F))$ on $\Omega$. 

\subsection{Reduced analytic spaces \em \cite[I.1]{gmt}} 
Let $\K=\R$ or $\C$ and let $(X,\an_X)$ be an either complex or real analytic space. Let $\Fhaz_X$ be the sheaf of $\K$-valued functions on $X$ and let $\vartheta:\an_X\to\Fhaz_X$ be the morphism of sheaves defined for each open set $U\subset X$ by $\vartheta_U(s):U\to\K,\ x\mapsto s(x)$ where $s(x)$ is the class of $s$ module the maximal ideal $\gtm_{X,x}$ of $\an_{X,x}$. Recall that $(X,\an_X)$ is \em reduced \em if $\vartheta$ is injective. Denote the image of $\an_X$ under $\vartheta$ with $\an_X^r$. The pair $(X,\an_X^r)$ is called the \em reduction \em of $(X,\an_X)$ and $(X,\an_X)$ is reduced if and only if $\an_X=\an_X^r$. The reduction is a covariant functor from the category of $\K$-analytic spaces to that of reduced $\K$-analytic spaces.

\subsection{Normalization of reduced complex analytic spaces}

A reduced complex analytic space $(X,\an_X)$ is \em normal \em if for each point $x\in X$, the local analytic ring $\an_{X,x}$ is a \em normal ring\em, that is, it is reduced and integrally closed (in its total ring of fractions). Riemmann's extension theorem holds for normal complex analytic spaces, that is, if $X$ is a normal complex analytic space and $Y\subset X$ is a closed analytic subset of codimension greater than or equal to one, each function $F$ on $X$ that is holomorphic outside $Y$ and locally bounded at the points of $Y$ can be extended holomorphically to the whole $X$. Functions on an arbitrary complex analytic space that are holomorphic outside some closed analytic subset of codimension at least one and are locally bounded at the points of such closed analytic subset are usually called \em weakly holomorphic\em. We can restate Riemann's extension theorem as follows: {\em Each weakly holomorphic function on a normal complex analytic space is holomorphic.} 

We consider for each reduced complex analytic space $(X,\an_X)$ the following two sheaves of $\an_X$-modules.
\begin{itemize}
\item The sheaf $\Hw$ of germs of weakly holomorphic functions.
\item The {\em normalization sheaf} $\an^\nu_X$, whose fibre at each point $x\in X$ is the integral closure $\ol{\an_{X,x}}^\nu$ of $\an_{X,x}$ in its total ring of fractions $Q(\an_{X,x})$.
\end{itemize}
Recall that if $\gtp_1,\ldots,\gtp_s$ are the minimal prime ideals of the ring $\an_{X,x}$, the total ring of fractions $Q(\an_{X,x})$ of $\an_{X,x}$ is (isomorphic to) the product of the fields of fractions $Q(\an_{X,x}/\gtp_j)$ of the integral domains $\an_{X,x}/\gtp_j$. In addition, the integral closure of the local ring $\an_{X,x}$ in its total ring of fractions is (isomorphic to) the product of the integral closures of the rings $\an_{X,x}/\gtp_i$ in their respective fields of fractions.

According to \cite{gare,ok2} both sheaves of $\an_X$-modules $\Hw$ and $\an^\nu_X$ are coherent. We summarize the main results concerning normalization of reduced complex analytic spaces in the following theorem.

\begin{thm}[Normalization]\label{normalization0} 
Let $(X,\an_X)$ be a reduced complex analytic space. Then there exists a normal complex analytic space $(X^\nu,\an_{X^\nu})$ together with a proper (surjective) holomorphic map $\pi:X^\nu\to X$ that is a $1$-sheeted analytic ramified cover, whose critical set is the set of singular points $\Sing(X)$ of $X$. The couple $((X^\nu,\an_{X^\nu}),\pi)$ is unique up to biholomorphic diffeomorphism. In addition,
\begin{itemize}
\item[(i)] If $X_x=X_{1,x}\cup\cdots\cup X_{s,x}$ is the decomposition into irreducible components of the germ $X_x$ of $X$ at a point $x\in X$, the fiber $\pi^{-1}(x)$ has cardinal $s$ and (after reordering the indices) $\pi$ maps a neighborhood of the point $y_j\in\pi^{-1}(x)$ in $X^\nu$ onto a neighborhood of $x$ in a representative of $X_{j,x}$ for $j=1,\ldots,s$. Additionally, if $\gtp_1,\ldots,\gtp_s$ are the minimal prime ideals of the ring $\an_{X,x}$, then (after reordering the indices) $\an_{X^\nu,y_j}\cong\ol{\an_{X,x}/\gtp_j}^\nu$ for $j=1,\ldots,s$.
\item[(ii)] The homomorphism of rings $\pi^*:\mer(X)\to\mer(X^\nu),\ \xi\mapsto\xi\circ\pi$ induced by $\pi$ is an isomorphism between the rings of meromorphic functions of $X$ and $X^\nu$. We say that $(X,\an_X)$ and $(X^\nu,\an_{X^\nu})$ are \em birational\em.
\item[(iii)] The coherent sheaves of $\an_X$-modules $\Hw$, $\an^\nu_X$ and $\pi_*(\an_{X^\nu})$ are isomorphic. 
\item[(iv)] We have the following chain of inclusions and isomorphisms
$$
\an(X)\hookrightarrow\ol{\an(X)}^\nu\hookrightarrow H^0(X,\Hw)\cong H^0(X,\an^\nu_{X})\cong H^0(X,\pi_*(\an_{X^\nu}))=\an(X^\nu).
$$
\end{itemize}
\end{thm}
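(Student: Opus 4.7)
My plan is to deduce the whole package from the coherence of the normalization sheaf $\an_X^\nu$ (the deep input provided by Oka--Grauert--Remmert, cited as \cite{gare,ok2}), constructing $X^\nu$ locally and gluing, then reading off (i)--(iv) from the resulting object.

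First I would set up the local algebra. Fix $x\in X$ and let $\gtp_1,\dots,\gtp_s$ be the minimal primes of the local ring $\an_{X,x}$; the local parametrization theorem identifies the quotients $\an_{X,x}/\gtp_j$ with local analytic rings of the irreducible components $X_{j,x}$. Because $\an_{X,x}$ is reduced,
\[
Q(\an_{X,x})\cong\prod_{j=1}^s Q(\an_{X,x}/\gtp_j),\qquad (\an_X^\nu)_x=\ol{\an_{X,x}}^\nu\cong\prod_{j=1}^s\ol{\an_{X,x}/\gtp_j}^\nu,
\]
a product of $s$ local normal analytic domains. This is the algebraic source of the number $s$ of preimages that will show up in (i).

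Next I would build $X^\nu$ and $\pi$ by a relative-spectrum construction. On a relatively compact open $U\ni x$ coherence yields finitely many generators $\xi_1,\dots,\xi_m\in\an_X^\nu(U)$, each satisfying a monic polynomial $P_k(T)\in\an_X(U)[T]$; the $\an_X(U)$-algebra structure of $\an_X^\nu(U)$ supplies polynomial relations that carve out a closed analytic subspace $X^\nu_U\subset U\times\C^m$ with projection $\pi_U:X^\nu_U\to U$ finite and surjective. The uniqueness of the integral closure lets one glue these local models consistently into a complex analytic space $(X^\nu,\an_{X^\nu})$ together with a proper holomorphic $\pi:X^\nu\to X$ satisfying $\pi_*\an_{X^\nu}\cong\an_X^\nu$. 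Normality of $\an_{X^\nu,y}$ for each $y\in X^\nu$ is automatic from the stalk description above, so $(X^\nu,\an_{X^\nu})$ is normal. Since $\an_{X,x}$ is already integrally closed whenever $x\in\Reg(X)$, the map $\pi$ restricts to a biholomorphism over the regular locus, and hence is a $1$-sheeted ramified cover whose critical set is exactly $\Sing(X)$.

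With $(X^\nu,\pi)$ in hand the remaining items are essentially bookkeeping. For (i), the stalk identity $(\pi_*\an_{X^\nu})_x=\bigoplus_{y\in\pi^{-1}(x)}\an_{X^\nu,y}$ together with the product decomposition of $(\an_X^\nu)_x$ forces the fiber $\pi^{-1}(x)$ to have cardinal $s$, with local rings matching $\ol{\an_{X,x}/\gtp_j}^\nu$, and each branch $X_{j,x}$ is the image of a neighborhood of the corresponding $y_j$. For (ii), since $\pi$ is biholomorphic over $\Reg(X)$ whose complement is nowhere dense and analytic, pullback of meromorphic functions is an isomorphism. For (iii), $\pi_*\an_{X^\nu}\cong\an_X^\nu$ is built into the construction, while the identification with $\Hw$ uses Riemann's extension theorem: on a normal stalk weak holomorphicity coincides with holomorphicity, and on an arbitrary reduced stalk the integral closure in the total quotient ring coincides with the ring of weakly holomorphic germs. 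Then (iv) follows by applying $H^0(X,\cdot)$ to (iii) and noting that any element of $\ol{\an(X)}^\nu$ satisfies a monic equation over $\an(X)$ stalkwise and hence lies in $H^0(X,\Hw)$. Uniqueness up to biholomorphism is a direct consequence of the universal property of the integral closure applied to any other candidate normalization. The hard part, which I would take as a black box, is the coherence of $\an_X^\nu$: it rests on a delicate analysis using the discriminant of a local branched cover of $X$ and a Noetherian argument inside the sheaf of meromorphic functions, and it is precisely what permits the leap from the stalkwise algebraic definition of the normalization to a globally defined analytic object.
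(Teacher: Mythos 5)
Your outline is correct and matches the paper's treatment: the paper does not prove this theorem in the text either, but refers the construction of $(X^\nu,\an_{X^\nu},\pi)$ from the coherent sheaf $\an_X^\nu$ to Grauert--Remmert and Oka, which is exactly the relative-spectrum-and-glue argument you sketch (with coherence of $\an_X^\nu$ taken as the black box). The only step the paper writes out explicitly is the inclusion $\ol{\an(X)}^\nu\subset H^0(X,\Hw)$, obtained from the monic equation via the bound $|\xi(x)|\le\max\{1,\sum_{k}|A_k(x)|\}$, which is the same idea underlying your closing remark on item (iv).
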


For the proof of the previous theorem we refer the reader to \cite[\S6 \& \S8]{gare} or \cite[Lemme fondamental]{ok2}. The space $(X^\nu,\an_{X^\nu})$ is called the {\em normalization of $(X,\an_X)$}. We recall next how to show elementarily that: \em An integral element $\xi\in Q(\an(X))$ over $\an(X)$ is a weakly holomorphic function\em, that is, $\ol{\an(X)}^\nu\subset H^0(X,\Hw)$.

\begin{proof}
Let $F,G,A_0,\ldots,A_{p-1}\in\an(X)$ be holomorphic functions such that $G$ is a non zero divisor of $\an(X)$ and $\xi=\frac{F}{G}$ satisfies the monic polynomial equation $\t^p+\sum_{k=0}^{p-1}A_k\t^k$. Observe that $\xi$ is holomorphic on $X\setminus\{G=0\}$ and, as $G$ is a non zero divisor of $\an(X)$, the analytic set $Y:=\{G=0\}$ has codimension $\geq1$. Let us show that $\xi$ is locally bounded at the points of $Y$. Pick a point $x\in X\setminus Y$, where $\xi$ is defined and $\xi(x)\neq0$. We have
\begin{multline*}
\xi(x)^p+\sum_{k=0}^{p-1}A_k(x)\xi(x)^k\ \leadsto\ \xi(x)^p=-\sum_{k=0}^{p-1}A_k(x)\xi(x)^k\\ \leadsto\ |\xi(x)|\leq\sum_{k=0}^{p-1}|A_k(x)||\xi(x)|^{k-p-1}\ \leadsto\ |\xi(x)|\leq\max\Big\{1,\sum_{k=0}^{p-1}|A_k(x)|\Big\}.
\end{multline*}
As the function $\max\{1,\sum_{k=0}^{p-1}|A_k|\}:X\to\R$ is continuous, $\xi$ is locally bounded at the points of $Y$, as required.
\end{proof}

Using Cartan's Theorem B and \cite[52.17, 53.1]{kk} one shows that if $(X,\an_X)$ is a reduced Stein space, the total ring of fractions $Q(\an(X))$ coincides with the ring $\mer(X)$ of meromorphic functions on $X$. Thus, we have the following diagram
$$
\xymatrix{
\an(X)\ar@{^{(}->}[r]&\ol{\an(X)}^\nu\ar@{^{(}->}[r]\ar@{^{(}-->}[d]&\an({X^\nu})\ar@{^{(}->}[d]\\
&Q(\an(X))=\mer(X)\ar@{<->}[r]^\cong_{\pi^*}&\mer({X^\nu})=Q(\an({X^\nu}))
}
$$
that is, $\an(X)\hookrightarrow\ol{\an(X)}^\nu\hookrightarrow\an(X^\nu)\hookrightarrow\mer(X)$.

\subsection{Underlying real analytic structure \em \cite[II.4]{gmt}}\label{uras} 
Let $(Z,\an_Z)$ be a local model for a complex analytic space defined by a coherent sheaf of ideals $\I\subset\an_{\C^n}|_{\Omega}$, that is, $Z:=\supp(\an_{\C^n}|_{\Omega}/\I)$ and $\an_Z:=(\an_{\C^n}|_{\Omega}/\I)|_Z$. Suppose that $\I$ is generated by finitely many holomorphic functions $F_1,\ldots,F_r$ on $\Omega$. Let $\I^\R$ be the coherent sheaf of ideals of $\an_{\R^{2n}}|_{\Omega^\R}$ generated by $\Re^*(F_i),\Im^*(F_i)$ for $i=1,\ldots,r$. Let $(Z^\R,\an_Z^\R)$ be the local model for a real analytic space defined by the coherent sheaf of ideals $\I^\R$. For each complex analytic space $(X,\an_X)$ there exists a structure of real analytic space on $X$ that we denote $(X^\R,\an_X^\R)$ and it is called the \em real underlying structure of $(X,\an_X)$\em. The previous construction provides a covariant functor from the category of complex analytic spaces to that of real analytic spaces \cite[I.3.3]{gmt}. If $(X,\an_X)$ is a reduced complex analytic space, it may fail that $(X^\R,\an^\R_X)$ is coherent or reduced \cite[III.2.15]{gmt}. 

\subsection{Holomorphic and anti-holomorphic sections} The conjugation in $\C$ induces readily a \em conjugation \em in the sheaf $\an_X^\R\otimes_\R\C$ as follows. First, if $(Z,\an_Z)$ is a local model for complex analytic spaces, we define the conjugate germ of $F_x:=\Re^*(F_x)+\sqrt{-1}\Im^*(F_x)\in\an_{Z,x}^\R\otimes_\R\C$ as $\ol{F_x}:=\Re^*(F_x)-\sqrt{-1}\Im^*(F_x)\in\an_{Z,x}^\R\otimes_\R\C$. We define the conjugation in the sheaf $\an_X^\R\otimes_\R\C$ by considering neighborhoods at each point $x\in X$ that are isomorphic to some local model. The conjugation $\omega:\an_X^\R\otimes_\R\C\to\an_X^\R\otimes_\R\C$ is the morphism of sheaves such that $\omega_x(F_x)=\ol{F_x}$ for each $x\in X$ and $F_x\in\an_{X,x}^\R\otimes_\R\C$.

A germ $G_x\in\an_{X,x}^\R\otimes_\R\C$ is called \em anti-holomorphic \em (resp. \em holomorphic\em) if there exists an isomorphism of a neighborhood of $x$ onto a local model such that $G_x$ is the image of an anti-holomorphic (resp. holomorphic) germ. Of course $G_x$ is anti-holomorphic if and only if $\ol{G_x}$ is holomorphic. We denote by $\ol{\an}_X$ the sheaf of anti-holomorphic sections that we define as a subsheaf of $\an_X^\R\otimes_\R\C$. For each $U\subset X$ open we have
$$
H^0(U,\ol{\an}_X):=\{G\in H^0(U,\an_X^\R\otimes_\R\C):\ G_x\text{ is anti-holomorphic }\forall x\in U\}.
$$
The sheaf $\an_X$ of holomorphic sections may be regarded analogously as a subsheaf of $\an_X^\R\otimes_\R\C$. The conjugation of $\an_X^\R\otimes_\R\C$ turns holomorphic sections into anti-holomorphic ones and vice versa. If a germ $G_x\in\an_{X,x}\cap\ol{\an}_{X,x}$, there exists $H_x\in\an_{X,x}$ such that $G_x=\ol{H_x}$. Thus, $|G|^2_x=G_x\ol{G_x}=G_xH_x\in\an_{X,x}$, so $|G|^2_x$ is by Remmert's Theorem constant and by the maximum modulus principle $G_x\in\C$. This means that if $U\subset X$ is an open subset, $H^0(U,\an_X)\cap H^0(U,\ol{\an}_X)=\C$.

\subsubsection{Anti-involutions}\label{complex}
Let $(X,\an_X)$ be a complex analytic space and let $(X^\R,\an_X^\R)$ be its real underlying structure. An anti-involution on $(X,\an_X)$ is a morphism of $\R$-ringed spaces $\sigma:(X^\R,\an_X^\R\otimes_\R\C)\to(X^\R,\an_X^\R\otimes_\R\C)$ such that $\sigma^2=\id$ and it interchanges the subsheaf of holomorphic sections $\an_X$ with the of subsheaf anti-holomorphic ones $\ol{\an}_X$. For simplicity the anti-involutions shall be denoted as $\sigma:(X,\an_X)\to(X,\an_X)$.

\subsubsection{Fixed part space}\label{fixed}
Let $(X,\an_X)$ be a complex analytic space endowed with an anti-involu\-tion $\sigma:(X,\an_X)\to(X,\an_X)$. Let $X^\sigma:=\{x\in X:\ \sigma(x)=x\}$ and define a sheaf $\an_{X^\sigma}$ on $X^\sigma$ in the following way: for each open subset $U\subset X^\sigma$, we define $H^0(U,\an_{X^\sigma})$ as the subset of $H^0(U,\an_X|_{X^\sigma})$ of invariant sections. The $\R$-ringed space $(X^\sigma,\an_{X^\sigma})$ is called the fixed part space of $(X,\an_X)$ with respect to $\sigma$. By \cite[II.4.10]{gmt} it holds that $(X^\sigma,\an_{X^\sigma})$ is a real analytic space if $X^\sigma\neq\varnothing$ and it is a closed subspace of $(X^\R,\an_X^\R)$.

\subsection{Complexification and $C$-analytic spaces \cite[III.3]{gmt}}\label{complexification} 
A real analytic space $(X,\an_X)$ is a \em $C$-analytic space \em if it satisfies one of the following two equivalent conditions:
\begin{itemize}
\item[(1)] Each local model of $(X,\an_X)$ is defined by a coherent sheaf of ideals.
\item[(2)] There exist a complex analytic space $(\widesim{X},\an_{\widesim{X}})$ endowed with an anti-involution $\sigma$ whose fixed part space is $(X,\an_X)$.
\end{itemize}
We call the complex analytic space $(\widesim{X},\an_{\widesim{X}})$ is a \em complexification \em of $X$. As $(X,\an_X)$ is a coherent real analytic space (because the local models are defined by coherent sheaves of ideals), the complexfication $(\widesim{X},\an_{\widesim{X}})$ satisfies the following properties:
\begin{itemize}
\item[(i)] $\an_{\widesim{X},x}=\an_{X,x}\otimes_\R\C$ for each $x\in X$.
\item[(ii)] The germ of $(\widesim{X},\an_{\widesim{X}})$ at $X$ is unique up to an isomorphism.
\item[(iii)] $X$ has a fundamental system of invariant open Stein neighborhoods in $\widesim{X}$. 
\item[(iv)] If $X$ is reduced, then $\widesim{X}$ is also reduced.
\end{itemize} 
For further details see \cite{c2,gmt,t}.
\begin{remark}
The above definition of complexification differs from Cartan's classical one. In \cite{c2} a \em complexification \em of a reduced real analytic space is a reduced complex analytic space $(\widesim{X},\an_{\widesim{X}})$ satisfying conditions (i), (ii) and (iii) above.
\end{remark} 

\subsubsection{Complexification of morphisms \em \cite[III.3.11]{gmt}}\label{com}
Let $\varphi:(X,\an_X)\to(Y,\an_Y)$ be a morphism of $C$-analytic spaces. Let $(\widesim{X},\an_{\widesim{X}})$ and $(\widesim{Y},\an_{\widesim{Y}})$ be respective complexifications of $(X,\an_X)$ and $(Y,\an_Y)$. There exist:
\begin{itemize}
\item[(i)] a Stein open neighborhood $\Omega\subset\widesim{X}$ of $X$ and an anti-involution $\sigma:(\Omega,\an_{\widesim{X}}|_\Omega)\to(\Omega,\an_{\widesim{X}}|_\Omega)$ whose fixed part space is $(X,\an_X)$,
\item[(ii)] a Stein open neighborhood $\Theta\subset\widesim{Y}$ of $Y$ and an anti-involution $\tau:(\Theta,\an_{\widesim{Y}}|_\Theta)\to(\Theta,\an_{\widesim{Y}}|_\Theta)$ whose fixed part space is $(Y,\an_Y)$,
\item[(iii)] a morphism of Stein spaces $\widesim{\varphi}:(\Omega,\an_{\widesim{X}}|_\Omega)\to(\Theta,\an_{\widesim{Y}}|_\Theta)$ such that $\widesim{\varphi}|_X=\varphi$ and $\widesim{\varphi}^\R\circ\sigma=\tau\circ\widesim{\varphi}^\R$.
\end{itemize}
In addition, if $\varphi$ is an isomorphism (resp. embedding), shrinking $\Omega$ and $\Theta$, also $\widesim{\varphi}$ is an isomorphism (resp. embedding). We show next that if $\widesim{\varphi}^{-1}(Y)=X$ and $\varphi$ is proper and surjective, shrinking $\Omega$ and $\Theta$, also $\widesim{\varphi}$ is proper and surjective.

\begin{lem}\label{neighcomplex}
Let $(X,\an_X)$ and $(Y,\an_Y)$ be $C$-analytic spaces. Let $\varphi:X\to Y$ be a proper surjective analytic map and let $\widesim{\varphi}:\widesim{X}\to\widesim{Y}$ be a complexification of $\varphi$. Suppose that $\widesim{\varphi}^{-1}(Y)=X$. Then there exist open neighborhoods $\Omega\subset\widesim{X}$ of $X$ and $\Theta\subset\widesim{Y}$ of $Y$ such that $\widesim{\varphi}:\Omega\to\Theta$ is proper and surjective.
\end{lem}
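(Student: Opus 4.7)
The plan is to split the lemma into two independent goals: arranging properness of $\widesim{\varphi}$ after shrinking, and then deducing surjectivity via Remmert's Theorem combined with the stalk identity of \ref{complexification}(i). The hypothesis $\widesim{\varphi}^{-1}(Y)=X$ prevents preimages of points of $Y$ from escaping into $\widesim{X}\setminus X$, and it is used decisively in the properness step.

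For properness I would begin with a local version. Given a compact $L\subset Y$ with compact preimage $K:=\varphi^{-1}(L)\subset X$, choose a relatively compact open neighborhood $U\subset\widesim{X}$ of $K$. Then $\widesim{\varphi}(\ol{U}\setminus U)\cap L=\varnothing$, because any preimage of $y\in L$ lying in $\ol{U}\setminus U$ would, by $\widesim{\varphi}^{-1}(Y)=X$, lie in $X\cap(\ol{U}\setminus U)$, hence in $K\cap(\ol{U}\setminus U)=\varnothing$. Since $\widesim{\varphi}(\ol{U}\setminus U)$ is compact in $\widesim{Y}$ and disjoint from $L$, there is an open $V\subset\widesim{Y}$ with $L\subset V$ and $V\cap\widesim{\varphi}(\ol{U}\setminus U)=\varnothing$, which makes the restriction $\widesim{\varphi}|_{U\cap\widesim{\varphi}^{-1}(V)}\colon U\cap\widesim{\varphi}^{-1}(V)\to V$ proper. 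To globalize, I would cover $Y$ by a locally finite family $\{V_i\}$ of such opens (possible by paracompactness of $\widesim{Y}$) together with the corresponding $U_i$, then set $\Theta:=\bigcup_iV_i$ and $\Omega:=\bigcup_i(U_i\cap\widesim{\varphi}^{-1}(V_i))$. Local finiteness ensures that the preimage of any compact subset of $\Theta$ is a finite union of closed subsets of the compact sets $\ol{U_i}$, yielding properness of $\widesim{\varphi}|_\Omega$.

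For surjectivity, once properness holds, Remmert's Theorem (recalled at the start of Section~\ref{s2}) makes $\widesim{\varphi}(\Omega)$ a complex analytic subset of $\Theta$ containing $\varphi(X)=Y$. The stalk identity $\an_{\widesim{Y},y}=\an_{Y,y}\otimes_\R\C$ at points $y\in Y$ from \ref{complexification}(i) implies that the only complex analytic subset germ of $\widesim{Y}$ at $y$ containing $Y_y$ is $\widesim{Y}_y$ itself: any holomorphic germ $f=f_1+\sqrt{-1}f_2$ vanishing on $Y$ has both $f_1,f_2\in\an_{Y,y}$ vanishing, hence $f=0$. Consequently $Y$ lies in the interior of $\widesim{\varphi}(\Omega)$ inside $\Theta$, and replacing $\Theta$ by this interior (and $\Omega$ by its preimage) yields surjectivity while preserving properness. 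The main obstacle is the globalization step for properness, where the local pieces must be assembled so that the resulting cover is locally finite in the final $\Theta$; the hypothesis $\widesim{\varphi}^{-1}(Y)=X$ carries the most weight by making the local properness work at all, forbidding preimages of points of $Y$ from accumulating outside $X$.
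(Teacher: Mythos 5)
Your local step is sound: for $K=\varphi^{-1}(L)\subset U$ relatively compact, the hypothesis $\widesim{\varphi}^{-1}(Y)=X$ does force $\widesim{\varphi}(\cl(U)\setminus U)\cap L=\varnothing$, and separating $L$ from this compact set gives a proper restriction $U\cap\widesim{\varphi}^{-1}(V)\to V$. The surjectivity step (Remmert plus the fact that no proper analytic subset of $\widesim{Y}$ can contain $Y$ near a point of $Y$, by $\an_{\widesim{Y},y}=\an_{Y,y}\otimes_\R\C$) is also correct and is exactly how the paper concludes.

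The gap is in the globalization of properness. Setting $\Theta:=\bigcup_iV_i$ and $\Omega:=\bigcup_i(U_i\cap\widesim{\varphi}^{-1}(V_i))$, the preimage of a compact $M\subset\Theta$ is $\bigcup_{i\in F}\bigl(U_i\cap\widesim{\varphi}^{-1}(V_i\cap M)\bigr)$ for a finite $F$; these pieces are relatively compact but they are \emph{not} closed subsets of $\cl(U_i)$, contrary to what you assert. A sequence $z_n\in U_i\cap\widesim{\varphi}^{-1}(V_i)$ with $\widesim{\varphi}(z_n)\in M$ can converge to a point $z\in\cl(U_i)\setminus U_i$ with $\widesim{\varphi}(z)\in M\cap\Bd(V_i)$; your separation only excludes this when $\widesim{\varphi}(z)\in V_i$. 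Such a $z$ may fail to lie in any $U_j\cap\widesim{\varphi}^{-1}(V_j)$, even though $\widesim{\varphi}(z)\in V_j\subset\Theta$ for some $j$, so $(\widesim{\varphi}|_\Omega)^{-1}(M)$ need not be closed in $\widesim{X}$, hence need not be compact. Local finiteness does not cure this. The missing idea — which is the heart of the paper's proof — is to first work with the \emph{closed} set $T:=\bigcup_i\cl(U_i)$ (a locally finite union of compacta), prove that $\widesim{\varphi}|_T:T\to\widesim{\varphi}(T)$ is proper onto the closed set $\widesim{\varphi}(T)$, and then delete from $\widesim{Y}$ the closed set $\widesim{\varphi}(C)$ where $C:=T\setminus\bigcup_i\Int(\cl(U_i))$; one then restricts to $\Theta_1:=\widesim{Y}\setminus\widesim{\varphi}(C)$ and its preimage. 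This deletion is where the hypothesis $\widesim{\varphi}^{-1}(Y)=X$ enters a second, equally essential time: it guarantees $C\cap X=\varnothing$, hence $\widesim{\varphi}(C)\cap Y=\varnothing$, so $\Theta_1$ is still a neighborhood of $Y$. Your proof omits this step, and without it the constructed $\Omega\to\Theta$ need not be proper.
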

\begin{proof}
Let $\{L_k\}_{k\geq1}$ be an exhaustion of $Y$ by compact sets. As $\varphi$ is proper, $\{K_k:=\varphi^{-1}(L_k)\}_{k\geq1}$ is an exhaustion of $X$ by compact sets. As $\widesim{Y}$ is paracompact and locally compact, there exists a locally finite covering $\{V_i\}_{i\in I}$ of $\widesim{Y}$ by open subsets with compact closures. For each $x\in X$ let $W^x\subset\widesim{X}$ be and open neighborhood of $x$ with compact closure such that there exists $i(x)\in I$ satisfying $\widesim{\varphi}(\cl(W^x))\subset U^x:=V_{i(x)}$. As $\varphi$ is surjective, $Y\subset\bigcup_{x\in X}U^x$.

Consider the family of compact sets $\{K_k\setminus\Int(K_{k-1})\}_{k\geq1}$ where $K_0=\varnothing$. For each $k\geq1$ we choose a finite set $J_k$ such that 
$$
J_k\subset K_k\setminus\Int(K_{k-1})\subset\bigcup_{x\in J_k}W^x.
$$

\renewcommand{\theparagraph}{\thesubsubsection.\arabic{paragraph}}
\setcounter{secnumdepth}{4}
\paragraph{}\label{stepcito}
Define $J:=\bigcup_{k\geq1}J_k$ and let us check: \em For each $y\in\widesim{Y}$ there exists an open neighborhood $V^y\subset\widesim{Y}$ and $\ell\geq 1$ such that $V^y\cap\widesim{\varphi}(\cl(W^x))=\varnothing$ if $x\in J\setminus\bigcup_{1\leq k\leq\ell}J_k$\em. Consequently, \em the family ${\mathfrak F}:=\{\widesim{\varphi}(\cl(W^x))\}_{x\in J}$ is locally finite \em and the set \em $S:=\bigcup_{x\in J}\widesim{\varphi}(\cl(W^x))$ is closed in $\widesim{Y}$\em. 

If $y\in\widesim{Y}\setminus\cl(S)$, the result is clear. Pick a point $y\in\cl(S)$ and let $V^y\subset\widesim{Y}$ be an open neighborhood of $y$ with compact closure. As the family $\{V_i\}_{i\in I}$ is locally finite, there exists a finite set $F\subset I$ such that $V_i\cap\cl(V^y)=\varnothing$ if $i\in I\setminus F$. Consider the compact set $\bigcup_{i\in F}\cl(V_i)$ and let $\ell\geq1$ be such that $\bigcup_{i\in F}\cl(V_i)\subset\Int(L_\ell)$. Let $x\in J$ be such that $V^y\cap\widesim{\varphi}(\cl(W^x))\neq\varnothing$. Then $V^y\cap U^x\neq\varnothing$, so there exists $i\in F$ such that $U^x=V_i$. Thus, $\varphi(x)\in V_i\subset\Int(L_\ell)$, so $x\in\bigcup_{1\leq k\leq\ell}J_k$.

\paragraph{}Define $T:=\bigcup_{x\in J}\cl(W^x)$. We claim: \em the map $\widesim{\varphi}|_T:T\to S$ is proper and surjective\em. 

Let $L\subset S$ be a compact set and let us check that $(\widesim{\varphi}|_T)^{-1}(L)$ is also compact. By \ref{stepcito} there exists $\ell\geq1$ such that
$$
L\cap\bigcup_{x\in J_k, k\geq\ell+1}\widesim{\varphi}(\cl(W^x))=\varnothing.
$$
Consequently, $(\widesim{\varphi}|_T)^{-1}(L)\cap\bigcup_{x\in J_k, k\geq\ell+1}\ol{W}^y=\varnothing$. Thus, $(\widesim{\varphi}|_T)^{-1}(L)\subset\bigcup_{x\in J_k, 1\leq k\leq\ell}\cl(W^x)$ is compact because it is a closed subset of a finite union of compact sets.

\paragraph{}Let $\Omega_0:=\bigcup_{x\in J}\Int(\cl(W^x))$, which is an open neighborhood of $X$ in $\widesim{X}$, and let $C:=T\setminus\Omega_0$, which is a closed subset of $T$ that do not meet $X$. As $\widesim{\varphi}|_T$ is proper and $\widesim{\varphi}^{-1}(Y)=X$, the image $\widesim{\varphi}(C)$ is a closed subset of $\widesim{Y}$ that do not meet $Y$. Let $\Theta_1:=\widesim{Y}\setminus\widesim{\varphi}(C)$, which is an open neighborhood of $Y$ in $\widesim{Y}$. Then $\Omega_1:=(\widesim{\varphi}|_T)^{-1}(\Theta_1)\subset\Omega_0$ is an open neighborhood of $X$ in $\widesim{X}$ and the map $\varphi|_{\Omega_1}:\Omega_1\to\Theta_1$ is proper. Consequently, $\varphi(\Omega_1)$ is by Remmert's Theorem an analytic subset of $\Theta_1$ that contains $Y$. Thus, $\varphi(\Omega_1)$ is a neighborhood of $Y$ in $\widesim{Y}$. Let $\Theta:=\Int(\varphi(\Omega_1))$ and $\Omega:=(\varphi|_{\Omega_1})^{-1}(\Theta)$. The restriction $\varphi|_{\Omega}:\Omega\to\Theta$ is proper and surjective, as required.
\end{proof}
\renewcommand{\theparagraph}{\thesubsection.\arabic{paragraph}}
\setcounter{secnumdepth}{4}

\section{Normalization of Stein spaces}\label{s3}

In the first part of this section we prove Theorem \ref{main} and Proposition \ref{inverse}. We will show that the ring of fractions $\Ss_K^{-1}\an(X)$ is an excellent ring (Theorem \ref{excell0}) and the integral closure of $\Ss_K^{-1}\an(X)$ in $\mer(X)$ is $\Tt_{K^*}^{-1}\an(X^\nu)$ (Theorem \ref{norexcell}) where $\Tt_{K^*}$ is the multiplicatively closed set of all holomorphic functions on $X^\nu$ that do not vanish at $K^*:=\pi^{-1}(K)$ and $(X^\nu,\an_{X^\nu},\pi)$ denotes the normalization of $(X,\an_X)$. Recall that as $(X,\an_X)$ is a Stein space, the total ring of fractions $Q(\an(X))$ coincides with the ring of meromorphic function $\mer(X)$ on $X$. In the second part of this section we include a proof of Zariski Theorem \ref{algebraiccase} and an example of a $2$-dimensional Stein space $(X,\an_X)$ for which $\overline{\an(X)}^\nu$ is different from $\an(X^\nu)$.

\begin{proof}[Proof of Theorem \em \ref{main}]
By Theorem \ref{normalization0} (iv) the inclusion $\ol{\an(X)}^\nu\hookrightarrow\an(X^\nu)$ holds. We will show in Theorem \ref{norexcell} that 
\begin{equation}\label{norexcelleq}
\Ss_K^{-1}\ol{\an(X)}^\nu\cong\Tt_{K^*}^{-1}\an(X^\nu)
\end{equation} 
for each compact set $K\subset X$, where $K^*:=\pi^{-1}(K)$. By Proposition \ref{inverse} and (its straightforward application) Corollary \ref{inverse-c0} we conclude
\begin{equation*}
\cl(\ol{\an(X)}^\nu)\cong{\displaystyle\lim_{\substack{\longleftarrow\\K\subset X\\\text{compact}}}}\Ss_K^{-1}\ol{\an(X)}^\nu
\cong{\displaystyle\lim_{\substack{\longleftarrow\\K\subset X\\\text{compact}}}}\Tt_{K^*}^{-1}\an(X^\nu)\cong{\displaystyle\lim_{\substack{\longleftarrow\\L\subset X^\nu\\\text{compact}}}}\Tt_{L}^{-1}\an(X^\nu)\cong\an(X^\nu).
\end{equation*}
The first isomorphism in the previous row follows from Proposition \ref {inverse} applied to $\ol{\an(X)}^\nu$, the second isomorphism follows from \eqref{norexcelleq}, the third is true because the family of compact sets $K^*$ for $K\subset X$ compact is cofinal inside the family of compact subsets of $X^\nu$, whereas the last isomorphism is a consequence of Corollary \ref {inverse-c0}.
\end{proof}

Thus, in order to have Theorem \ref{main} proved we are only left to show that Proposition \ref {inverse} and Theorem \ref{norexcell} hold.

\subsection{Fr\'echet closure of an $\an(X)$-submodule}
Let $(X,\an_X)$ be a Stein space and let $\Fhaz$ be a coherent sheaf of $\an_X$-modules. Recall that we endow $H^0(X,\Fhaz)$ with its natural Fr\'echet topology \cite[VIII.A.Thm.8]{gr2}. Let ${\mathfrak N}$ be an $\an(X)$-submodule of $H^0(X,\Fhaz)$. In what follows $K$ will always denote a compact set. Consider
\begin{align*}
{\mathfrak C}_1({\mathfrak N}):=\{&A\in{H^0(X,\Fhaz)}:\ \forall K\subset X\ \exists H\in\an(X)\ \text{such that}\ \{H=0\}\cap K=\varnothing\ 
\text{and}\ HA\in {\mathfrak N}\},\\
{\mathfrak C}_2({\mathfrak N}):=\{&A\in H^0(X,\Fhaz):\ \forall x\in X\ \exists G\in\an(X)\ \text{such that}\ G(x)\neq0\ \text{and}\ GA\in {\mathfrak N}\}.
\end{align*}

\begin{lem}\label{closure}
The closure of ${\mathfrak N}$ in $H^0(X,\Fhaz)$ is $\cl({\mathfrak N})={\mathfrak C}_1({\mathfrak N})={\mathfrak C}_2({\mathfrak N})$.
\end{lem}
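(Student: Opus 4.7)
My plan is to establish the cycle of inclusions $\cl({\mathfrak N}) \subseteq {\mathfrak C}_2 \subseteq {\mathfrak C}_1 \subseteq \cl({\mathfrak N})$; combined with the trivial ${\mathfrak C}_1 \subseteq {\mathfrak C}_2$ (take $K=\{x\}$), this will force the three sets to coincide. The two workhorses will be Cartan Theorems A and B for the Stein space $(X,\an_X)$, applied to suitable coherent sub- and quotient-sheaves of $\Fhaz$, together with the Oka-Weil approximation theorem on holomorphically convex compacta. The hard part will be the inclusion $\cl({\mathfrak N}) \subseteq {\mathfrak C}_2$, where one must upgrade the topological fact ``$A$ is a Fr\'echet limit of elements of ${\mathfrak N}$'' to the algebraic statement ``there exists $G \in \an(X)$ with $G(x)\neq 0$ and $GA\in{\mathfrak N}$''.

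To handle $\cl({\mathfrak N}) \subseteq {\mathfrak C}_2$, I would fix $A \in \cl({\mathfrak N})$ and $x \in X$ and proceed in two stages. First, for every $k \geq 1$ the $k$-jet evaluation $H^0(X,\Fhaz) \to \Fhaz_x / \gtm_{X,x}^k \Fhaz_x$ is continuous: in a local Stein presentation $\an_U^p \twoheadrightarrow \Fhaz|_U$ near $x$ it factors through continuous Taylor extraction on $\an(U)^p$ modulo relations, and the open mapping theorem for Fr\'echet spaces ensures the quotient map carries the correct topology. Hence any Fr\'echet-convergent $A_n \to A$ with $A_n \in {\mathfrak N}$ satisfies $(A_n)_x \to A_x$ in the $\gtm_{X,x}$-adic topology of $\Fhaz_x$. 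Since the $\an_{X,x}$-submodule $\widetilde{\mathfrak N}_x := \sum_{B \in {\mathfrak N}} \an_{X,x} \cdot B_x$ of the finitely generated $\an_{X,x}$-module $\Fhaz_x$ is $\gtm_{X,x}$-adically closed by the Krull intersection theorem, I obtain $A_x \in \widetilde{\mathfrak N}_x$, so there exist $B_1, \ldots, B_m \in {\mathfrak N}$ with $A_x$ lying in the stalk $\mathcal{P}_x$ of the coherent subsheaf $\mathcal{P} \subseteq \Fhaz$ generated by $B_1, \ldots, B_m$. Second, I introduce the coherent ideal sheaf $\mathcal{Q} := \operatorname{Ann}_{\an_X}([A])$, where $[A] \in H^0(X, \Fhaz/\mathcal{P})$ denotes the image of $A$; since $A_x \in \mathcal{P}_x$ one has $\mathcal{Q}_x = \an_{X,x}$, so Cartan Theorem A provides a global section $G \in H^0(X, \mathcal{Q}) \subseteq \an(X)$ with $G(x) \neq 0$. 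By construction $GA \in H^0(X, \mathcal{P})$, and Cartan Theorem B, applied to the syzygy sequence $0 \to \mathcal{K} \to \an_X^m \to \mathcal{P} \to 0$, yields $H^0(X,\mathcal{P}) = \sum_i \an(X) B_i \subseteq {\mathfrak N}$, so $GA \in {\mathfrak N}$ as needed.

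For ${\mathfrak C}_2 \subseteq {\mathfrak C}_1$, given $A \in {\mathfrak C}_2$ and a compact $K \subseteq X$, a finite subcover argument gives $G_1, \ldots, G_m \in \an(X)$ with $G_i A \in {\mathfrak N}$ and no common zero on $K$; the coherent ideal $(G_1, \ldots, G_m) \subseteq \an_X$ therefore coincides with $\an_X$ on an open neighborhood $U \supseteq K$, producing $1 = \sum_i \alpha_i G_i$ on $U$ with $\alpha_i \in \an(U)$. Replacing $K$ by its holomorphic convex hull (compact because $X$ is Stein) and applying Oka-Weil, I would approximate each $\alpha_i$ uniformly on $K$ by global $\beta_i \in \an(X)$, so that $H := \sum_i \beta_i G_i$ is uniformly close to $1$ on $K$ (hence non-vanishing there) and $HA = \sum_i \beta_i (G_i A) \in {\mathfrak N}$, placing $A$ in ${\mathfrak C}_1$.

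Finally, for ${\mathfrak C}_1 \subseteq \cl({\mathfrak N})$, I fix $A \in {\mathfrak C}_1$ and an exhaustion $\{K_\ell\}$ of $X$ by compact sets. For each $\ell$ I choose $H_\ell \in \an(X)$ non-vanishing on $K_\ell$ with $H_\ell A \in {\mathfrak N}$; the reciprocal $1/H_\ell$ is holomorphic on a neighborhood of $K_\ell$, so Oka-Weil yields $F_{\ell,n} \in \an(X)$ with $F_{\ell,n} H_\ell \to 1$ uniformly on $K_\ell$, whence $F_{\ell,n}(H_\ell A) \in {\mathfrak N}$ converges to $A$ in the seminorm of $K_\ell$ (local representations of $A$ being bounded on $K_\ell$). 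A standard diagonal extraction then produces a sequence in ${\mathfrak N}$ converging to $A$ in the Fr\'echet topology of $H^0(X, \Fhaz)$, closing the cycle.
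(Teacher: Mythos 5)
Your argument is correct, but it is organized quite differently from the paper's. The paper only proves one inclusion, $\cl({\mathfrak N})\subset{\mathfrak C}_1({\mathfrak N})$, and does so directly on a holomorphically convex compact $K$: it takes finitely many $A_1,\dots,A_r\in{\mathfrak N}$ generating the sheaf ${\mathfrak N}\an_X$ near $K$ (Cartan A), forms the colon ideal $({\mathfrak M}:\cl({\mathfrak N}))$ with ${\mathfrak M}=(A_1,\dots,A_r)\an(X)$, invokes Forster's theorem that this ideal is closed and Cartan's Lemma that $\cl({\mathfrak N})\an_{X,x}={\mathfrak N}\an_{X,x}$, and then approximates $1$ on $K$ by an element $H$ of that ideal; the converse chain ${\mathfrak C}_1\subset{\mathfrak C}_2\subset\cl({\mathfrak N})$ is simply cited from Cartan. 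You instead close the cycle $\cl({\mathfrak N})\subset{\mathfrak C}_2\subset{\mathfrak C}_1\subset\cl({\mathfrak N})$ and prove every link from scratch: your first step re-derives Cartan's stalkwise closedness lemma (jet continuity plus Krull/Artin--Rees) and then converts stalk membership $A_x\in{\mathfrak N}\an_{X,x}$ into a global multiplier $G$ with $G(x)\neq0$ via the coherent annihilator sheaf of $[A]$ in $\Fhaz/\mathcal{P}$ and Cartan A --- a pointwise substitute for the paper's colon-ideal-plus-Forster device; your second and third steps supply explicit Oka--Weil approximation arguments for the globalization from points to compacta and for ${\mathfrak C}_1\subset\cl({\mathfrak N})$, which the paper leaves to the literature. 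The trade-off is self-containedness versus brevity: your route avoids Forster's Satz entirely at the cost of proving the Cartan inclusions that the paper takes for granted.

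One imprecision worth fixing in your first step: continuity of the jet maps gives convergence of $j^k(A_n)$ to $j^k(A)$ in the finite-dimensional space $\Fhaz_x/\gtm_{X,x}^k\Fhaz_x$, not convergence of $(A_n)_x$ to $A_x$ in the $\gtm_{X,x}$-adic topology of $\Fhaz_x$ (the latter would require $A_x-(A_n)_x\in\gtm_{X,x}^k\Fhaz_x$ eventually, which need not hold). What you actually use is that the image of $\widetilde{\mathfrak N}_x$ in each finite-dimensional quotient is a linear subspace, hence closed, so $j^k(A)$ lies in it for every $k$; Krull's intersection theorem applied to $\Fhaz_x/\widetilde{\mathfrak N}_x$ then yields $A_x\in\bigcap_k(\widetilde{\mathfrak N}_x+\gtm_{X,x}^k\Fhaz_x)=\widetilde{\mathfrak N}_x$. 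With that rephrasing the step is exactly Cartan's argument and the proof is complete.
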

\begin{proof}
As the chain of inclusions ${\mathfrak C}_1({\mathfrak N})\subset{\mathfrak C}_2({\mathfrak N})\subset\cl({\mathfrak N})$ holds by \cite[VIII.Thm.4]{c1}, we only check: $\cl({\mathfrak N})\subset{\mathfrak C}_1({\mathfrak N})$. 

Let $K\subset X$ be a compact set. As $(X,\an_X)$ is a Stein space, we may assume that $K$ is holomorphically convex \cite[VII.A]{gr2}. Since ${\mathfrak N}\an_X$ is a coherent sheaf of $\an_X$-modules, there exists by Cartan's Theorem A an open neighborhood $\Omega\subset X$ of $K$ and $A_1,\ldots,A_r\in H^0(X,\Fhaz)$ such that ${\mathfrak N}\an_{X,x}$ is generated as an $\an_{X,x}$-module by $A_{1,x},\ldots,A_{r,x}$ for each $x\in\Omega$. By \cite[VIII.Thm.11]{c1} the finitely generated $\an(X)$-submodule ${\mathfrak M}:=(A_1,\ldots,A_r)\an(X)$ of $H^0(X,\Fhaz)$ is closed and by \cite[\S2.Satz 3]{of} the ideal 
$$
({\mathfrak M}:\cl({\mathfrak N})):=\{H\in\an(X):\ H\cl({\mathfrak N})\subset{\mathfrak M}\}
$$
is closed. By \cite[VIII.Lem.6]{c1} ${\mathfrak N}\an_{X,x}=\cl({\mathfrak N})\an_{X,x}$ for each $x\in X$. Thus, $({\mathfrak M}:\cl({\mathfrak N}))\an_{X,x}=\an_{X,x}$ for each $x\in\Omega$, that is, it is generated by $1$ at each point of $\Omega$. After shrinking $\Omega$, we may assume that it is a holomorphically convex neighborhood of $K$ and $H^0(\Omega,({\mathfrak M}:\cl({\mathfrak N}))\an_X)=H^0(\Omega,\an_X)$ (see \cite[VII.A.Prop.3 \& VIII.A.Thm.15]{gr2}). By \cite[VIII.A.Thm.11]{gr2} there exist holomorphic functions $H\in H^0(X,({\mathfrak M}:\cl({\mathfrak N}))\an_X)=({\mathfrak M}:\cl({\mathfrak N}))$ that are arbitrarily close to $1$ on $K$. Thus, there exists $H\in\an(X)$ such that $\{H=0\}\cap K=\varnothing$ and $H\cl({\mathfrak N})\subset{\mathfrak M}\subset{\mathfrak N}$. Consequently, $\cl({\mathfrak N})\subset{\mathfrak C}_1({\mathfrak N})$, as required. 
\end{proof}

We are ready to prove Proposition \ref{inverse}.

\begin{proof}[Proof of Proposition \em \ref{inverse}]
For each compact set $K\subset X$ consider the homomorphism 
$$
\phi_K:\cl({\mathfrak N})\to\Ss_K^{-1}{\mathfrak N}, F\mapsto\frac{FH}{H}
$$ 
where $H\in\Ss_K$ satisfies $FH\in{\mathfrak N}$ (recall that by Lemma \ref{closure} $\cl({\mathfrak N})={\mathfrak C}_2({\mathfrak N})$). It is straightforward to show that $\phi_K$ is well-defined. As $\cl({\mathfrak N})={\mathfrak C}_2({\mathfrak N})$, it holds that $\cl({\mathfrak N})$ together with the homomorphisms $\{\phi_K\}_{\substack{K\subset X\\\text{compact}}}$ is isomorphic to the inverse limit of the inverse system ${\mathfrak S}$. For the second part of the statement observe that the collection $\{K_\ell\}_\ell$ is cofinal inside the family of compact subsets of $X$.
\end{proof}
\begin{remark}\label{finitelimit}
An analogous result holds if we consider only finite subsets of $X$ (that are obviously compact sets) instead of all compact subsets of $X$. To that end use the letter ${\mathscr F}\subset X$ to denote a finite set and consider
$$
{\mathfrak C}_3({\mathfrak N}):=\{A\in{H^0(X,\Fhaz)}:\ \forall {\mathscr F}\subset X\ \exists H\in\an(X)\ \text{such that}\ \{H=0\}\cap{\mathscr F}=\varnothing\ 
\text{and}\ HA\in {\mathfrak N}\}.
$$
As ${\mathfrak C}_2({\mathfrak N})\subset{\mathfrak C}_3({\mathfrak N})\subset{\mathfrak C}_1({\mathfrak N})$ and ${\mathfrak C}_1({\mathfrak N})={\mathfrak C}_2({\mathfrak N})=\cl({\mathfrak N})$, we have $\cl({\mathfrak N})={\mathfrak C}_3({\mathfrak N})$. Once this equality holds, the alternative description 
$$
\cl({\mathfrak N})={\displaystyle\lim_{\substack{\longleftarrow\\{\mathscr F}\subset X\\\text{finite}}}}\Ss_{\mathscr F}^{-1}{\mathfrak N},
$$ 
which involves only finite sets, follows similarly to the one in Proposition \ref{inverse}.
\end{remark}

We denote an arbitrary maximal ideal of $\an(X)$ with $\gtm$ whereas $\gtm_x$ refers to the maximal ideal associated with a point $x\in X$.
\begin{cor}\label{closures}
Assume in addition that $X$ is irreducible and ${\mathfrak N}$ is a torsion-free $\an(X)$-module. We have:
\begin{itemize}
\item[(i)] ${\mathfrak N}=\bigcap_\gtm{\mathfrak N}_{\gtm}$.
\item[(ii)] $\cl({\mathfrak N})={\mathfrak C}_2({\mathfrak N})=\bigcap_{x\in X}{\mathfrak N}_{\gtm_x}$. Consequently, ${\mathfrak C}_2({\mathfrak N})={\mathfrak N}$ if and only if ${\mathfrak C}_2({\mathfrak N})\subset{\mathfrak N}_{\gtm}$ for each (free) maximal ideal $\gtm$ not associated with a point $x\in X$.
\item[(iii)] $\cl({\mathfrak N})={\mathfrak C}_1({\mathfrak N})=\bigcap_{\substack{K\subset X\\\text{compact}}}\Ss_K^{-1}{\mathfrak N}$.
\item[(iv)] If $\{K_\ell\}_{\ell\geq1}$ is an exhaustion of $X$ by compact sets, then ${\mathfrak C}_1({\mathfrak N})=\bigcap_{\ell\geq1}\Ss_{K_\ell}^{-1}{\mathfrak N}$. 
\item[(v)] If $K\subset X$ is a holomorphically convex compact set, then $\gtm\cap\Ss_K=\varnothing$ if and only if $\gtm=\gtm_x$ for some $x\in K$. In addition, $\Ss_K^{-1}{\mathfrak N}=\bigcap_{x\in K}{\mathfrak N}_{\gtm_x}$.
\end{itemize}
\end{cor}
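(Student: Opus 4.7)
\ The plan is to derive (i)--(iv) by algebraic unwinding on top of Lemma \ref{closure}, and to reduce (v) to a Cartan--Runge approximation on the holomorphically convex compactum $K$. Throughout, irreducibility of $X$ makes $\an(X)$ an integral domain with $Q(\an(X))=\mer(X)$, and torsion-freeness of ${\mathfrak N}$ lets us embed ${\mathfrak N}$, each ${\mathfrak N}_\gtm$, and each $\Ss_K^{-1}{\mathfrak N}$ into the single $\an(X)$-module ${\mathfrak N}\otimes_{\an(X)}\mer(X)$.

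For (i) I would run the standard conductor argument: given $\xi\in\bigcap_\gtm {\mathfrak N}_\gtm$, the ideal $I:=\{F\in\an(X):F\xi\in{\mathfrak N}\}$ meets $\an(X)\setminus\gtm$ for every maximal $\gtm$, hence $I=\an(X)$ and $\xi\in{\mathfrak N}$. Part (ii) unwinds the definition: $F\in{\mathfrak C}_2({\mathfrak N})$ says exactly that $F\in{\mathfrak N}_{\gtm_x}$ for every $x\in X$, and Lemma \ref{closure} identifies this set with $\cl({\mathfrak N})$. The ``Consequently'' clause combines this with (i): the implication ``$\Rightarrow$'' is automatic, while for ``$\Leftarrow$'' the hypothesis together with the automatic inclusions at the fixed maximal ideals forces ${\mathfrak C}_2({\mathfrak N})\subset\bigcap_\gtm{\mathfrak N}_\gtm={\mathfrak N}$, and the reverse inclusion is trivial. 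Part (iii) is the analogous unwinding for ${\mathfrak C}_1$, and (iv) follows from the cofinality of an exhaustion inside the family of all compact subsets (using that $K\subset K'$ implies $\Ss_{K'}\subset\Ss_K$).

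The main obstacle is the forward direction of (v): assuming $\gtm\cap\Ss_K=\varnothing$, I need to find $x\in K$ with $\gtm\subset\gtm_x$, after which maximality of $\gtm$ forces $\gtm=\gtm_x$. I would argue by contradiction. If no such $x$ exists, compactness of $K$ produces $F_1,\ldots,F_n\in\gtm$ with no common zero on $K$. Setting $\I:=(F_1,\ldots,F_n)\an_X$, Cartan's Theorem B applied to the kernel of the surjection $\an_X^n\to\I$ yields $H^0(X,\I)=(F_1,\ldots,F_n)\an(X)$; moreover $\I|_V=\an_X|_V$ on some open neighborhood $V$ of $K$, so $1=\sum g_iF_i$ on $V$ for suitable $g_i\in\an(V)$. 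Because $K$ is holomorphically convex in the Stein space $X$, Runge-type approximation produces $G_i\in\an(X)$ with $H:=\sum G_iF_i$ uniformly close to $1$ on $K$, hence nowhere zero on $K$; but $H\in(F_1,\ldots,F_n)\an(X)\subset\gtm$ contradicts $\gtm\cap\Ss_K=\varnothing$. The reverse implication is immediate. Finally, the identity $\Ss_K^{-1}{\mathfrak N}=\bigcap_{x\in K}{\mathfrak N}_{\gtm_x}$ rests on the same machinery: ``$\subset$'' is clear; for ``$\supset$'', given $\xi$ in the intersection, pick for each $x\in K$ some $G_x\in\an(X)$ with $G_x(x)\neq 0$ and $G_x\xi\in{\mathfrak N}$, extract a finite subfamily $G_{x_1},\ldots,G_{x_m}$ with no common zero on $K$, and apply the same Cartan--Runge step to $(G_{x_1},\ldots,G_{x_m})$ to produce $H=\sum H_iG_{x_i}\in\Ss_K$ satisfying $H\xi=\sum H_i(G_{x_i}\xi)\in{\mathfrak N}$, whence $\xi\in\Ss_K^{-1}{\mathfrak N}$.
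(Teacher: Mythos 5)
Your proposal is correct and follows essentially the same route as the paper: the conductor/ideal-quotient argument for (i), unwinding the descriptions ${\mathfrak C}_1,{\mathfrak C}_2$ of $\cl({\mathfrak N})$ from Lemma \ref{closure} inside ${\mathfrak N}\otimes_{\an(X)}\mer(X)$ for (ii)--(iv) (the paper phrases (iii) via Proposition \ref{inverse} and Remark \ref{limint}, which is the same content), and the compactness-plus-Cartan--Runge approximation producing an element of the finitely generated ideal close to $1$ on $K$ for both halves of (v). The only cosmetic difference is that you justify $H^0(X,\I)=(F_1,\ldots,F_n)\an(X)$ via Theorem B while the paper cites the closedness of finitely generated ideals; both are standard and equivalent here.
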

\begin{proof}
Statements (ii) and (iv) are clear once the remaining ones are proved. Consider the multiplicatively closed set $\Ss:=\an(X)\setminus\{0\}$.

(i) Observe that ${\mathfrak N}\hookrightarrow{\mathfrak N}_{\gtm}\hookrightarrow\Ss^{-1}{\mathfrak N}$ for each maximal ideal $\gtm$ of $\an(X)$. Consequently, ${\mathfrak N}\hookrightarrow\bigcap_\gtm{\mathfrak N}_{\gtm}$. Let $\xi\in\bigcap_\gtm{\mathfrak N}_{\gtm}$ and define $\gta:=\{H\in\an(X):\ H\xi\in{\mathfrak N}\}$. We claim: $\gta=\an(X)$. 

Otherwise, there exists a maximal ideal $\gtm_0$ of $\an(X)$ such that $\gta\subset\gtm_0$. As $\xi\in{\mathfrak N}_{\gtm_0}$, there exists $A\in{\mathfrak N}$ and $H\in\an(X)\setminus\gtm_0$ such that $\xi=\frac{A}{H}$. As $\an(X)$ is an integral domain, $H\xi=A\in{\mathfrak N}$, so $H\in\gta\subset\gtm_0$, a contradiction. Consequently, $\gta=\an(X)$ and $\xi=1\cdot \xi\in{\mathfrak N}$.

(iii) As $\Ss_K\subset\Ss$, the ring $\an(X)$ is an integral domain and ${\mathfrak N}$ is torsion-free, the homomorphism $\Ss_K^{-1}{\mathfrak N}\hookrightarrow\Ss^{-1}{\mathfrak N}$ is an inclusion. In addition, $\rho_{K_1,K_2}:\Ss_{K_2}^{-1}{\mathfrak N}\to\Ss_{K_1}^{-1}{\mathfrak N}$ is injective if $K_1\subset K_2\subset X$ are compact sets. The statement follows from Proposition \ref{inverse} and Remark \ref{limint}.

(v) Let $\gtm$ be a maximal ideal of $\an(X)$. 

\paragraph{}We prove first: \em $\gtm\cap\Ss_K=\varnothing$ if and only if there exists $x\in K$ such that $\gtm=\gtm_x$\em. 

Let $\gtm$ be a maximal ideal such that $\gtm\cap\Ss_K=\varnothing$ and assume that $\gtm\neq\gtm_x$ for each $x\in K$. Thus, for each $x\in X$ there exists $F_x\in\gtm$ such that $F_x(x)\neq0$. As $K$ is compact, we find $F_1,\ldots,F_r\in\gtm$ such that $\{F_1=0,\ldots,F_r=0\}\cap K=\varnothing$. By \cite[VIII.Thm.11]{c1} the finitely generated ideal $\gta:=(F_1,\ldots,F_r)\an(X)\subset\gtm$ is closed. We have $\gta\an_{X,x}=\an_{X,x}$ for each $x\in\Omega=X\setminus\{F_1=0,\ldots,F_r=0\}$.

\paragraph{}\label{conto}We claim: \em There exists $H\in\gta$ that is close to $1$ on $K$, so $\{H=0\}\cap K=\varnothing$\em.

After shrinking $\Omega$, we may assume that it is a holomorphically convex neighborhood of $K$ and $H^0(\Omega,\gta\an_X)=H^0(\Omega,\an_X)$ (see \cite[VII.A.Prop.3 \& VIII.A.Thm.15]{gr2}). By \cite[VIII.A.Thm.11]{gr2} there exists a holomorphic function $H\in H^0(X,\gta\an_X)=\gta$ that is close to $1$ on $K$, so $\{H=0\}\cap K=\varnothing$. 

Thus, $H\in\gtm\cap\Ss_K$, which is a contradiction. Consequently, $\gtm=\gtm_x$ for some $x\in K$. The converse is clear.

\paragraph{} We check now: $\Ss_K^{-1}{\mathfrak N}=\bigcap_{x\in K}{\mathfrak N}_{\gtm_x}$. By (i) the torsion-free $\an(X)$-module $\Ss_K^{-1}{\mathfrak N}$ satisfies
$$
\Ss_K^{-1}{\mathfrak N}=\bigcap_\gtm(\Ss_K^{-1}{\mathfrak N})_\gtm\subset\bigcap_{x\in K}(\Ss_K^{-1}{\mathfrak N})_{\gtm_x}=\bigcap_{x\in K}{\mathfrak N}_{\gtm_x}\subset\mer(X).
$$
Pick now a fraction $\frac{A}{F}\in\bigcap_{x\in K}{\mathfrak N}_{\gtm_x}$. For each $x\in K$ there exists $A_x\in{\mathfrak N}$ and $F_x\not\in\gtm_x$ such that $\frac{A_x}{F_x}=\frac{A}{F}$. As $K$ is compact, we find $x_1,\ldots,x_r\in K$ such that $\{F_{x_1}=0,\ldots,F_{x_r}=0\}\cap K=\varnothing$. Proceeding as in the proof of \ref{conto} there exist $G_1,\ldots,G_r\in\an(X)$ such that the zero-set of $H:=F_{x_1}G_1+\cdots+F_{x_r}G_r$ does not meet $K$. Define $B:=A_{x_1}G_1+\cdots+A_{x_r}G_r\in{\mathfrak N}$ and observe that $\frac{A}{F}=\frac{B}{H}\in\Ss_K^{-1}{\mathfrak N}$ (because $\frac{A_{x_i}}{F_{x_i}}=\frac{A}{F}$) for each $i$. Thus, $\bigcap_{x\in K}(\Ss_K^{-1}{\mathfrak N})_{\gtm_x}\subset\Ss_K^{-1}{\mathfrak N}$, as required.
\end{proof}

\begin{remarks}
(i) By Remark \ref{finitelimit} we have $\an(X)={\displaystyle\lim_{\substack{\longleftarrow\\{\mathscr F}\subset X\\\text{finite}}}}\Ss_{\mathscr F}^{-1}\an(X)$.

(ii) If $X$ is irreducible, we have by Corollary \ref{closures}
$$
\an(X)=\bigcap_{\substack{K\subset X\\\text{compact}}}\Ss_K^{-1}\an(X)=\bigcap_{\substack{{\mathscr F}\subset X\\\text{finite}}}\Ss_{\mathscr F}^{-1}\an(X)=\bigcap_{x\in X}\an(X)_{\gtm_x}
$$ 

(iii) If $X$ has finitely many irreducible components and a compact set $K\subset X$ meets all of them, $\Ss_K$ does not contain zero divisors and $\Ss_K^{-1}\an(X)\subset\mer(X)$. In this case, we consider an exhaustion $\{K_\ell\}_{\ell\geq1}$ of $X$ by compact sets such that $K_1$ meets all the irreducible components of $X$. Then the homomorphisms $\rho_{K_\ell,K_j}:\Ss_{K_j}^{-1}\an(X)\to\Ss_{K_\ell}^{-1}\an(X)$ are injective and $\an(X)=\bigcap_{\ell\geq1}\Ss_{K_\ell}^{-1}\an(X)$.

(iv) If $X$ has infinitely many connected components, $\Ss_K$ meets the set of (non-trivial) zero divisors of $\an(X)$ because $K$ does not meet all the irreducible components of $X$. 
\end{remarks}

\subsection{Excellence of rings of fractions associated with compact sets} 
Let $(X,\an_X)$ be a Stein space and let $K\subset X$ be a compact set. 

\begin{thm}\label{excell0}
The ring of fractions $\Ss_K^{-1}\an(X)$ is excellent.
\end{thm}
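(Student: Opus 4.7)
The plan is to realize $\Ss_K^{-1}\an(X)$ as a ring of germs of holomorphic functions along a Stein compact, and to derive excellence from the classical theory of such semi-local analytic rings (Frisch, Siu, Matsumura, Bingener).

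First, I would reduce to the case where $K$ is $\an(X)$-convex. Since $(X,\an_X)$ is a Stein space, the holomorphically convex hull $\widehat{K}$ of $K$ is compact and admits a fundamental system of open Stein neighborhoods. Applying Cartan's approximation theorem \cite[VIII.A.Thm.11]{gr2}, a function in $\Ss_K$ can be uniformly approximated on $\widehat{K}$ by global holomorphic functions, producing (for close enough approximations) elements of $\Ss_{\widehat{K}}$ whose difference with the original is invertible in $\Ss_K^{-1}\an(X)$. This yields a canonical isomorphism $\Ss_K^{-1}\an(X)\cong\Ss_{\widehat{K}}^{-1}\an(X)$, so there is no loss of generality in assuming $K=\widehat{K}$.

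Second, I would establish a canonical isomorphism
\[
\Ss_K^{-1}\an(X)\ \xrightarrow{\ \sim\ }\ \an_X(K):=\lim_{\substack{\longrightarrow\\ U\supset K\\ U\text{ open Stein}}}\an(U),\qquad \frac{F}{G}\longmapsto\Big[\frac{F}{G}\Big]_U,
\]
where $U$ is chosen so that $G$ does not vanish on $U$. The map is plainly well-defined on representatives. Injectivity reduces, via the primary decomposition of $\an_{X,x}$ recalled in the normalization theorem \ref{normalization0}(i), to checking that a global holomorphic function vanishing on an open neighborhood of $K$ has zero image in $\Ss_K^{-1}\an(X)$, which follows by choosing $G\in\Ss_K$ supported outside that neighborhood using the Stein property. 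For surjectivity, given $H\in\an(U)$, I would apply Cartan's Theorem B to a coherent ideal sheaf that coincides with $\an_X$ on a smaller Stein neighborhood $V$ of $K$ (with $\cl(V)\subset U$) and has high vanishing order outside $U$, thereby producing $F\in\an(X)$ agreeing with $GH$ near $K$ for a suitable $G\in\Ss_K$; then $H=F/G$ in $\an_X(K)$.

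Third, granted the above identification, the theorem follows from Frisch's classical theorem together with its refinements, which assert that the ring $\an_X(K)$ of germs along a Stein compact in a complex analytic space is Noetherian, universally catenary, is a G-ring (its formal fibres are geometrically regular, by reduction to completions of convergent power series rings, which are excellent), and satisfies the J-2 condition, hence is excellent.

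The main obstacle is the identification in the second step, particularly surjectivity and the delicate behavior in the non-irreducible case: when $\an(X)$ has zero divisors, the localization $\Ss_K^{-1}\an(X)$ need not embed into $\mer(X)$, so the argument cannot be reduced to intersections inside a field of fractions as in the irreducible setting of Corollary \ref{closures}(v), and one must argue in the $\an_X$-module setting using minimal primes at each point of $K$. Once the isomorphism with $\an_X(K)$ is in hand, excellence is a straightforward invocation of the classical analytic theory.
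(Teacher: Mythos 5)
Your strategy hinges on the identification $\Ss_K^{-1}\an(X)\cong\an_X(K)$ with the Frisch ring of germs of holomorphic functions along the Stein compact $K$, and this identification is false: the natural map is injective but not surjective. An element of $\Ss_K^{-1}\an(X)$ is a fraction $F/G$ with $F,G$ \emph{global} holomorphic functions and $G$ non-vanishing on $K$, so its germ along $K$ extends to a meromorphic function on all of $X$; a general element of $\an_X(K)$ admits no such extension. Concretely, for $X=\C$ and $K=\{0\}$ one has $\Ss_K^{-1}\an(\C)=\an(\C)_{\gtm_0}$, the germs at $0$ of entire fractions $F/G$ with $G(0)\neq0$, whereas $\an_\C(\{0\})=\C\{z\}$; a convergent power series with the unit circle as natural boundary lies in the latter but not in the former. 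Accordingly the surjectivity argument you sketch cannot work: Cartan's Theorem B and Oka--Weil give \emph{approximation} of $H\in\an(U)$ on $K$ by global sections, never exact agreement of $GH$ with a global $F$ near $K$. Since $\Ss_K^{-1}\an(X)$ is neither a localization nor a quotient of $\an_X(K)$, the excellence of the Frisch ring cannot be transferred to it, and your third step collapses. Your first reduction also overstates matters: $\Ss_K^{-1}\an(X)$ is in general \emph{not} isomorphic to $\Ss_{\hat K}^{-1}\an(X)$ (for $X=\C$, $K$ the unit circle and $\hat K$ the closed disk, $1/z$ lies in the first ring but not in the second); it is a further ring of fractions of it, which is the relation one actually needs, since excellence passes to localizations.

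For comparison, the paper works directly with the localization and never passes through germ rings: for $X=\C^n$ it checks that $\Ss_K^{-1}\an(\C^n)$ is regular with all maximal ideals of height $n$ and residue field $\C$, and applies Matsumura's derivation criterion for excellence using $\tfrac{\partial}{\partial x_i}$ and the coordinate functions; for a closed subspace of $\C^n$ it takes a quotient; for a general Stein $X$ with $K$ holomorphically convex it combines Lemma \ref{elimsob} with a Narasimhan embedding $\varphi:X\to\C^k$ biholomorphic near $\Reg(X)\cup K$ and Lemma \ref{denomin} (to control denominators) to identify $\Ss_K^{-1}\an(X)$ with $\Ss_{K'}^{-1}\an(\varphi(X))$; and for arbitrary $K$ it localizes the result for $\hat K$. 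To salvage your approach you would need either to prove excellence of $\Ss_K^{-1}\an(X)$ directly, or to exhibit a localization or regular faithfully flat relation with a ring already known to be excellent; the Frisch ring $\an_X(K)$ does not provide one.
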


To lighten the proof of the previous result we do before some preliminary work. Given a subset $S\subset X$, we denote $\Jhaz(S)$ the ideal of all holomorphic sections in $\an(X)$ that vanishes identically on $S$. As $X$ is a Stein space, $\Jhaz(S)$ generates by Cartan's Theorem A the coherent sheaf of ideals $\Jhaz_{S,x}:=\{f_x\in\an_{X,x}:\ S_x\subset\{f_x=0\}\}$.

\begin{lem}\label{elimsob}
Assume $K$ is holomorphically convex and let $Z$ be the union of the (finitely many) irreducible components of $X$ that meet $K$. Denote 
\begin{align*}
&\Ss_K:=\{F\in\an(X):\ \{F=0\}\cap K=\varnothing\},\\
&\Ss_K':=\{F\in\an(Z):\ \{F=0\}\cap K=\varnothing\}. 
\end{align*}
Then $\Ss_K^{-1}\mer(X)\cong\mer(Z)$ and $\Ss_K^{-1}\an(X)\cong{\Ss_K'}\!^{-1}\an(Z)$. In addition, $\Ss_K^{-1}\mer(X)$ is the total ring of fractions of $\Ss_K^{-1}\an(X)$.
\end{lem}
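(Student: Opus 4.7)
My approach is to exploit the decomposition $X=Z\cup Y$, where $Y$ denotes the closed analytic subset of $X$ given by the union of the irreducible components that do not meet $K$, and to reduce everything to $\an(Z)$ via the restriction homomorphism $\rho:\an(X)\to\an(Z)$. Since the family of irreducible components of $X$ is locally finite and $K$ is compact, only finitely many components meet $K$, so $Z$ is a finite union of irreducible components, $Y\cap K=\varnothing$, and $X=Z\cup Y$. Every point of $K$ lies in some irreducible component of $X$, which then meets $K$ and is contained in $Z$; hence $K\subset Z$. Mimicking the approximation argument of \ref{conto} in the proof of Corollary \ref{closures}, applied to the coherent ideal sheaf of germs vanishing on $Y$ (which agrees with $\an_{X,x}$ at every $x\notin Y$), I produce a holomorphic function $F_0\in\Jhaz(Y)$ close to $1$ on the holomorphically convex compact $K$, so that $F_0\in\Ss_K\cap\Jhaz(Y)$. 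The resulting \emph{key identity} is $F\cdot F_0=0$ in $\an(X)$ for every $F\in\Jhaz(Z)$: at each $x\in X$ the germ $F_xF_{0,x}$ vanishes on $Z_x\cup Y_x=X_x$, and by reducedness of $\an_{X,x}$ this forces the germ to be zero.

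The isomorphism $\Ss_K^{-1}\an(X)\cong{\Ss_K'}\!^{-1}\an(Z)$ follows as a consequence. By Cartan's Theorem B the restriction $\rho$ is surjective with kernel $\Jhaz(Z)$. Since $K\subset Z$, any lift $\tilde G\in\an(X)$ of $G\in\Ss_K'$ satisfies $\tilde G|_K=G|_K$ nowhere zero on $K$, so $\tilde G\in\Ss_K$ and $\rho(\Ss_K)=\Ss_K'$; thus the induced map $\overline\rho:\Ss_K^{-1}\an(X)\to{\Ss_K'}\!^{-1}\an(Z)$ is surjective. Injectivity follows from the key identity: if $\overline\rho(F/H)=0$, some lift $\tilde H'\in\Ss_K$ of an appropriate element of $\Ss_K'$ satisfies $\tilde H'F\in\Jhaz(Z)$, hence $F_0\tilde H'F=0$ in $\an(X)$ with $F_0\tilde H'\in\Ss_K$, forcing $F/H=0$.

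For the third claim and, by combination, the first: each irreducible component of $Z$ meets $K$, so $\Ss_K'$ consists of non-zero-divisors of the reduced ring $\an(Z)$, whence ${\Ss_K'}\!^{-1}\an(Z)\subset\mer(Z)$ with total ring of fractions $Q({\Ss_K'}\!^{-1}\an(Z))=\mer(Z)$. The inclusion $\Ss_K^{-1}\mer(X)\subset Q(\Ss_K^{-1}\an(X))$ is formal, since non-zero-divisors of $\an(X)$ remain non-zero-divisors after localizing at $\Ss_K$. For the reverse, given a non-zero-divisor $\bar b=b/h$ of $\Ss_K^{-1}\an(X)$, the element $b|_Z$ is a non-zero-divisor of $\an(Z)$; I plan to modify $b$ to $\tilde b:=b+g$ with $g\in\Jhaz(Z)$ so that $\tilde b$ does not vanish identically on any irreducible component of $Y$, hence is a non-zero-divisor of $\an(X)$. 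By the key identity $F_0g=0$, so $\tilde b=b$ in $\Ss_K^{-1}\an(X)$, and then $1/\tilde b\in\mer(X)$ yields the required inverse $h/\tilde b$ of $\bar b$ inside $\Ss_K^{-1}\mer(X)$. Combining these facts gives $\Ss_K^{-1}\mer(X)=Q(\Ss_K^{-1}\an(X))=\mer(Z)$.

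The main obstacle lies in the construction of $g$: I plan to pick one point $p_i\in X_i\setminus Z$ in each irreducible component $X_i\subset Y$ on which $b$ vanishes identically (these points form a locally finite closed discrete subset of $X$), and then apply Cartan's Theorem B to the coherent sheaf of ideals of germs vanishing on $Z$---whose stalk at each $p_i\notin Z$ coincides with $\an_{X,p_i}$---to prescribe nonzero values at the $p_i$. Once $g$ is found, the identity $F_0g=0$ renders the modification invisible in $\Ss_K^{-1}\an(X)$, which is precisely what allows the argument to close.
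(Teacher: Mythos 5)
Your proof is correct in substance and its core coincides with the paper's: both hinge on producing, by the Oka--Cartan approximation argument on a holomorphically convex neighbourhood of $K$ disjoint from the components missing $K$, a function $F_0\in\Ss_K\cap\Jhaz(Y)$ (the paper's $H\in\Jhaz(Z')\cap\Ss_K$), which kills $\Jhaz(Z)$ in the localization and, combined with the surjectivity of $\an(X)\to\an(Z)$ from Cartan's Theorem B, yields $\Ss_K^{-1}\an(X)\cong{\Ss_K'}^{-1}\an(Z)$ exactly as in the paper. Where you genuinely diverge is in the meromorphic part: the paper first proves $\Ss_K^{-1}\mer(X)\cong\mer(Z)$ directly, using the product decomposition $\mer(X)\cong\prod_k\mer(X_k)$ from \cite[53 A.5]{kk} and killing the factors over $Z'$ with $H$, and then reads off the total-ring-of-fractions claim; you instead prove $Q(\Ss_K^{-1}\an(X))=\Ss_K^{-1}\mer(X)$ by hand, modifying a denominator $b$ by an element $g\in\Jhaz(Z)$ (invisible in the localization since $F_0g=0$) so that $b+g$ becomes a non-zero-divisor of $\an(X)$, in the spirit of the paper's Lemma \ref{denomin}. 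Your route is somewhat longer but more self-contained, avoiding the citation of the product structure of $\mer(X)$; the paper's is shorter at the cost of that external input.

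One small point needs patching in your final step: you place a point $p_i$ only in those irreducible components of $Y$ on which $b$ vanishes identically, but on a component of $Y$ where $b\not\equiv0$ the correction $g$ is unconstrained and $\tilde b=b+g$ could a priori vanish identically there, spoiling the claim that $\tilde b$ is a non-zero-divisor of $\an(X)$. The fix is immediate and already implicit in Lemma \ref{denomin}: choose one point in \emph{every} irreducible component of $Y$ (forming a discrete closed set disjoint from $Z$) and prescribe $g$ at all of them so that $b+g$ is nonzero at each; with that adjustment the argument closes.
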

\begin{proof}
Let $\{X_k\}_{k\geq1}$ be the collection of the irreducible components of $X$. We may assume that $Z=\bigcup_{k=1}^mX_k$. Let $\Omega$ be a holomorphically convex neighborhood of $K$ in $X$ that do not meet $Z':=\bigcup_{k\geq m+1}X_k$ (see \cite[VII.A.Prop.3]{gr2}). We have $\Jhaz(Z')\an_{X,x}=\an_{X,x}$ for each $x\in\Omega$, that is, it is generated by $1$ at each point of $\Omega$. Thus, by \cite[VIII.A.Prop.6]{gr2} $H^0(\Omega,\Jhaz(Z')\an_X)=H^0(\Omega,\an_X)$. By \cite[VIII.A.Thm.11]{gr2} there exists a holomorphic function $H\in H^0(X,\Jhaz(Z')\an_X)=\Jhaz(Z')$ that is close to $1$ on $K$, so $\{H=0\}\cap K=\varnothing$ and $H\in\Ss_K$. By \cite[53 A.5]{kk} $\mer(X)\cong\prod_{k\geq1}\mer(X_k)$ and $\mer(Z)\cong\prod_{k=1}^m\mer(X_k)$. Consider the surjective projection homomorphism 
$$
\pi:\prod_{k\geq1}\mer(X_k)\to\prod_{k=1}^m\mer(X_k)
$$
and let $\pi':\mer(X)\to\mer(Z),\ \frac{F}{G}\mapsto\frac{F|_Z}{G|_Z}$ be the corresponding surjective homomorphism (induced by $\pi$ and the isomorphisms above). Let us check that the homomorphism
$$
\theta:\Ss_K^{-1}\mer(X)\to\mer(Z),\ \frac{A/B}{C}\mapsto\frac{A|_Z}{B|_ZC|_Z}
$$
is an isomorphism. As $\pi'$ is surjective, $\theta$ is surjective. Let us check that it is also injective. If $\theta(\frac{A/B}{C})=\frac{A|_Z}{B|_ZC|_Z}=0$, then $A|_Z=0$, so $HA=0$ on $X$ (recall that $H\in\Jhaz(Z')\cap\Ss_K$ was constructed above). Thus, $H\frac{A}{B}=0$, so the quotient $\frac{A/B}{C}=0$ in $\Ss_K^{-1}\mer(X)$ (as $H\in\Ss_K$). Therefore $\theta$ is injective.

By Cartan's Theorem B the restriction homomorphism $\varphi:\an(X)\to\an(Z),\ F\mapsto F|_Z$ is surjective. Consequently, the homomorphism
$$
\varphi':\Ss_K^{-1}\an(X)\to{\Ss_K'}\!^{-1}\an(Z),\ \frac{F}{G}\mapsto\frac{F|_Z}{G|_Z}
$$
is surjective. Let us check that it is also injective. If $\varphi'(\frac{F}{G})=\frac{F|_Z}{G|_Z}=0$, then $F|_Z=0$, so $HF=0$ on $X$ and $\frac{F}{G}=0$. Therefore $\varphi'$ is injective. 

Finally, as $\mer(Z)$ is the total ring of fractions of ${\Ss_K'}\!^{-1}\an(Z)$, we conclude that $\Ss_K^{-1}\mer(X)$ is the total ring of fractions of $\Ss_K^{-1}\an(X)$, as required.
\end{proof}

\begin{lem}\label{denomin}
Assume $K$ is holomorphically convex. Let $F,G\in\an(X)$ be such that $G$ is not a zero divisor and $\frac{F_x}{G_x}\in\an_{X,x}$ for each $x\in K$. Then there exist $F_1,G_1\in\an(X)$ such that $G_1$ is not a zero divisor, $\{G_1=0\}\cap K=\varnothing$ and $\frac{F}{G}=\frac{F_1}{G_1}$.
\end{lem}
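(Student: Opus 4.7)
The plan is to find a function $H\in\an(X)$ satisfying simultaneously: (a)~$HF\in G\an(X)$; (b)~$\{H=0\}\cap K=\varnothing$; and (c)~$H$ is not a zero divisor in $\an(X)$. Granted such an $H$, we set $G_1:=H$ and let $F_1\in\an(X)$ be the unique element with $G_1F=GF_1$ (unique because $G$ is a non-zero-divisor). This yields $F/G=F_1/G_1$ in $\mer(X)$ with all the required properties.

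Consider the coherent sheaf of ideals
$$
\Jhaz:=\ker\bigl(\an_X\xrightarrow{\,\cdot F\,}\an_X/G\an_X\bigr),
$$
whose stalk at $x\in X$ equals $\{h_x\in\an_{X,x}:\ h_xF_x\in G_x\an_{X,x}\}$. The hypothesis $F_x/G_x\in\an_{X,x}$ for $x\in K$ translates to $1\in\Jhaz_x$ on $K$, so $\Jhaz=\an_X$ on an open neighborhood of $K$; after shrinking, this neighborhood may be assumed to be a holomorphically convex Stein open containing $K$. A verbatim repetition of the argument in \ref{conto}, combining Cartan's Theorems~A and~B with the approximation result \cite[VIII.A.Thm.11]{gr2}, then produces a global section $H_0\in H^0(X,\Jhaz)$ as close to~$1$ on $K$ as desired; in particular, $\{H_0=0\}\cap K=\varnothing$. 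Applying Cartan~B to the short exact sequence $0\to\ker(\cdot G)\to\an_X\xrightarrow{\,\cdot G\,}G\an_X\to 0$ of coherent sheaves, multiplication by $G$ surjects $\an(X)$ onto $H^0(X,G\an_X)$, and since $H_0F$ visibly lies in $H^0(X,G\an_X)$ we obtain $H_0F=GF_0$ for a unique $F_0\in\an(X)$ (uniqueness following from $G$ being a non-zero-divisor).

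The remaining obstacle, and the delicate point of the proof, is property~(c): $H_0$ can vanish identically on irreducible components $X_k$ of $X$ disjoint from $K$. The key observation is that $G$ itself belongs trivially to $\Jhaz(X)$, so for every $\epsilon\in\C$ the perturbation $H:=H_0+\epsilon G$ still lies in $\Jhaz(X)$. For $|\epsilon|$ sufficiently small (controlled by $\max_K|G|$), $H$ stays close to~$1$ on~$K$, which preserves (a) and (b) and forces $H|_{X_k}\not\equiv 0$ for every component $X_k$ that meets $K$. For a component $X_k$ disjoint from $K$, the hypothesis that $G$ is a non-zero-divisor gives $G|_{X_k}\not\equiv 0$; consequently $H|_{X_k}\equiv 0$ would force the meromorphic function $H_0/G$ to equal the constant $-\epsilon$ on $X_k$, excluding at most one value of $\epsilon$ per component. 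Since the Stein space $X$ is paracompact and second countable, its irreducible components form an at most countable family, so a generic small $\epsilon\neq 0$ makes $H$ a non-zero-divisor in $\an(X)$.

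Finally, setting $G_1:=H$ and $F_1:=F_0+\epsilon F$, we compute $G_1F=H_0F+\epsilon GF=G(F_0+\epsilon F)=GF_1$, whence $F/G=F_1/G_1$ in $\mer(X)$. The hypothesis that $G$ is not a zero divisor enters twice: once to produce the lift $F_0$, and—more critically—to ensure $G|_{X_k}\not\equiv 0$ on every irreducible component, which is precisely what allows the perturbation $H_0+\epsilon G$ to break the possible zero-divisor-ness of $H_0$.
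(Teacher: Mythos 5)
Your proof is correct and follows the same overall strategy as the paper's: both form the coherent ``denominator'' ideal sheaf with stalks $\{h_x:\ h_xF_x\in G_x\an_{X,x}\}$, use Cartan's theorems together with the approximation theorem on a holomorphically convex neighborhood of $K$ to produce a global section $H_0$ of it close to $1$ on $K$, and then repair the possible failure of $H_0$ to be a non-zero divisor by adding a multiple of $G$ (which lies tautologically in that ideal). The two arguments diverge only in how that last repair is justified. The paper fixes in advance a point $z_k$ with $G(z_k)\neq 0$ in each irreducible component disjoint from $K$, arranges these into a discrete set $D$, and builds the approximating section inside $\Jhaz(D)\an_X\cap\I$ so that it vanishes on $D$; then $G_1:=mH+G$ equals $G$ at each $z_k$ and is therefore nonzero there. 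You instead take a generic small $\epsilon$ and observe that for a component $X_k$ disjoint from $K$ the identity $H_0|_{X_k}\equiv-\epsilon\,G|_{X_k}$ can hold for at most one value of $\epsilon$ (precisely because $G|_{X_k}\not\equiv 0$), so the countably many components exclude only countably many values of $\epsilon$. Both mechanisms are sound and rest on the same use of the non-zero-divisor hypothesis on $G$; yours trades the paper's construction of the discrete set $D$ and the extra vanishing condition for a countability/genericity step, which is arguably slightly cleaner, while the paper's version is fully constructive.
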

\begin{proof}
Consider the sheaf of ideals $\I$ of $\an_X$ whose stalks are $\I_x:=\{H_x\in\an_{X,x}:\ H_x\tfrac{F_x}{G_x}\in\an_{X,x}\}$. By \cite[Lem.3.2]{as} $\I$ is coherent, so there exist by Cartan's Theorem A an open neighborhood $\Omega\subset X$ of $K$ and holomorphic sections $H_1,\ldots,H_r\in H^0(X,\I)$ such that the ideal $\I_x$ is generated by $H_{1,x},\ldots,H_{r,x}$ for each $x\in\Omega$. Let $\{X_k\}_{k\geq1}$ be the collection of the irreducible components of $X$ that do not meet $K$. As $G$ is a non-zero divisor of $\an(X)$, there exists $z_k\in X_k$ such that $G(z_k)\neq0$ for each $k\geq1$. We may assume that $D:=\{z_k\}_{k\geq1}$ is a discrete subset of $X$. Let $Z:=D\cup\supp(\I)$, which is a complex analytic subset of $X$ that does not meet $K$. Shrinking $\Omega$ we may assume that it is a holomorphically convex neighborhood of $K$ in $X$ and that it does not meet $Z$ (see \cite[VII.A.Prop.3]{gr2}). Consider the coherent sheaf $\Fhaz:=\Jhaz(D)\an_X\cap(H_1,\ldots,H_r)\an_X$. We have $\Fhaz_x=\an_{X,x}$ for each $x\in\Omega$, that is, it is generated by $1$ at each point of $\Omega$. Thus, by \cite[VIII.A.Prop.6]{gr2} $H^0(\Omega,\Fhaz)=H^0(\Omega,\an_X)$. By \cite[VIII.A.Thm.11]{gr2} there exists $H\in H^0(X,\Fhaz)\subset H^0(X,\Jhaz(D))\cap H^0(X,\I)$ that is close to $1$ on $K$. Let $m>0$ be such that $G_1:=mH+G$ does not vanishes at any point of $K$. As $H\in H^0(X,\Jhaz(D))$, we have $H|_D=0$, so $G_1$ does not vanish at any point of $D$. Consequently, $G_1$ is not a zero divisor of $\an(X)$ because it is not identically zero at any of the irreducible components of $X$. In addition, $G_1\in H^0(X,\I)$, so $F_1:=G_1\frac{F}{G}\in\an(X)$ and $\frac{F}{G}=\frac{F_1}{G_1}$, as required.
\end{proof}

\begin{proof}[Proof of Theorem \em \ref{excell0}]
The proof is conducted in several steps:

\noindent{\bf Step 1.} \em Assume $X=\C^n$\em. The maximal ideals of $\Ss_K^{-1}\an(\C^n)$ are $\gtn_x:=\gtm_x(\Ss_K^{-1}\an(\C^n))$ where $x\in K$ and $\gtm_x$ is the maximal ideal of $\an(\C^n)$ associated with $x$. In addition, $(\Ss_K^{-1}\an(\C^n))_{\gtn_x}\cong\an(\C^n)_{\gtm_x}$. As the local rings $\an(\C^n)_{\gtm_x}$ are regular for each $x\in K$, also $\Ss_K^{-1}\an(\C^n)$ is regular. Observe that $\Ss_K^{-1}\an(\C^n)/(\gtm_x(\Ss_K^{-1}\an(\C^n)))\cong\C$ for each $x\in K$ and all the maximal ideals of $\Ss_K^{-1}\an(\C^n)$ have the same height $n$. Consider the partial derivatives $\frac{\partial}{\partial x_i}$ and the projections $\pi_i:\C^n\to\C,\ (x_1,\ldots,x_n)\mapsto x_i$. It holds $\frac{\partial}{\partial x_i}\pi_j=\delta_{ij}$ for $1\leq i,j\leq n$. By \cite[40.F, Th.102, p.291]{m} $\Ss_K^{-1}\an(\C^n)$ is an excellent ring.

\noindent{\bf Step 2.} \em Assume $X$ is a complex analytic subset of $\C^n$\em. In this case $\an(X)=\an(\C^n)/\Jhaz(X)$. Denote ${\Ss'}^{-1}_K=\{F\in\an(\C^n):\ \{F=0\}\cap K=\varnothing\}$. As the restriction homomorphism $\an(\C^n)\to\an(X),\ F\mapsto F|_X$ is surjective, it holds
$$
\Ss^{-1}_K\an(X)\cong({\Ss'}^{-1}_K\an(\C^n))/({\Ss'}^{-1}_K\Jhaz(X)),
$$
which is an excellent ring because it is the quotient of an excellent ring by an ideal.

\noindent{\bf Step 3.} \em Assume $(X,\an_X)$ is a Stein space and $K\subset X$ is a holomorphically convex compact set\em. By Lemma \ref{elimsob} we may assume that $K$ meets all the irreducible components of $X$. By \cite[Thm.5, Lem.6]{n4} there exists a proper injective holomorphic map $\varphi:X\to\C^k$ (where $k$ is large enough) such that for each $x\in\Reg(X)\cup K$ there exists an open neighborhood $U$ in $X$ such that $G|_U:U\to G(U)$ is an analytic isomorphism. By Remmert's Theorem $Z:=\varphi(X)$ is a complex analytic subset of $\C^n$ and consequently a Stein space with its canonical analytic structure. 

Denote $K':=\varphi(K)$ and let us check that $K'$ is holomorphically convex. Let $\Omega$ be an Oka-Weil neighborhood of $K$ in $X$ such that $\varphi|_{\Omega}:\Omega\to\varphi(\Omega)$ is biholomorphic \cite[VII.A.Prop.3]{gr2}. As $K$ is holomorphically convex in $X$, by \cite[A.Cor.9]{gr} $K$ is holomorphically convex in $\Omega$. In addition $\varphi(\Omega)$ is an Oka-Weil neighborhood of $K'$ in $Z$ and $K'$ is holomorphically convex in $\varphi(\Omega)$. As $\varphi(\Omega)$ is holomorphically convex, $K'$ is by \cite[VII.A.Cor.9, VIII.A.Thm.11]{gr} holomorphically convex in $Z$. Thus, it is enough to show: \em the rings of fractions $\Ss^{-1}_K\an(X)$ and $\Ss^{-1}_{K'}\an(Z)$ are isomorphic\em, because the second one is by Step 2 an excellent ring. 

By \cite[Lem.3.8]{as} the homomorphism
$$
\varphi^*:\mer(Z)\to\mer(X),\ \tfrac{F}{G}\mapsto\tfrac{F\circ\varphi}{G\circ\varphi}
$$
is an isomorphism. As $K$ meets all the irreducible components of $X$, also $K'$ meets all the irreducible components of $Z$ (recall that $\varphi$ induces a bijection between the irreducible components of $X$ and those of $Z$). Thus, $\Ss^{-1}_K\an(X)\subset\mer(X)$ and $\Ss^{-1}_{K'}\an(Z)\subset\mer(Z)$. We claim: $\varphi^*(\Ss^{-1}_{K'}\an(Z))=\Ss^{-1}_K\an(X)$. 

The inclusion $\varphi^*(\Ss^{-1}_{K'}\an(Z))\subset\Ss^{-1}_K\an(X)$ is clear because $\varphi(K)=K'$. To prove the converse inclusion pick $A\in\an(X)$, $B\in\Ss_K$ and $F,G\in\an(X)$ such that $G$ is a non zero divisor of $\an(X)$ and $\tfrac{F\circ\varphi}{G\circ\varphi}=\frac{A}{B}$. As $B\in\Ss_K$ and $\varphi:X\to Z$ is biholomorphic on an open neighborhood of $K$, we deduce that $\tfrac{F}{G}$ is holomorphic on $K'$. By Lemma \ref{denomin} there exist $F_1,G_1\in\an(Z)$ such that $G_1$ is not a zero divisor of $\an(Z)$, $\{G_1=0\}\cap K'=\varnothing$ and $\frac{F}{G}=\frac{F_1}{G_1}$. As $G_1\in\Ss_{K'}$, we have $\frac{F}{G}=\frac{F_1}{G_1}\in\Ss^{-1}_{K'}\an(Z)$ and the converse inclusion is proved.

\noindent{\bf Step 4.} \em General case: $(X,\an_X)$ is a Stein space and $K\subset X$ is a compact set\em. Let $\hat{K}$ be the holomorphic convex hull of $K$ in $X$. Let $\Tt$ be the homomorphic image of $\Ss_K$ in $\Ss_{\hat{K}}^{-1}\an(X)$. As $\Ss_{\hat{K}}\subset\Ss_K$ and $1\in\Ss_{\hat{K}}$, we have $\Ss_{K}^{-1}\an(X)\cong\Tt^{-1}(\Ss_{\hat{K}}^{-1}\an(X))$, see Remarks \ref{mcs} below. As $\Ss_{\hat{K}}^{-1}\an(X)$ is excellent, the ring of fractions $\Tt^{-1}(\Ss_{\hat{K}}^{-1}\an(X))$ is excellent too, so $\Ss_{K}^{-1}\an(X)$ is excellent, as required. 
\end{proof}

\subsection{Normalization of rings of fractions associated with compact sets.}
Let $(X,\an_X)$ be a Stein space and let $K\subset X$ be a compact set. We compute below the integral closure of $\Ss_K^{-1}\an(X)$ in its total ring of fractions $\Ss_K^{-1}\mer(X)$. We recall some properties of rings of fractions \cite[\S3. Ej. 3, 4, 7, 8, pag. 44]{am} that are used freely in the proofs of Theorems \ref{algebraiccase} and \ref{norexcell} and Lemma \ref{primality0}.
\begin{remarks}\label{mcs}
Let $A$ be a commutative ring. 

(i) If $\Ss,\Tt$ are multiplicatively closed subsets of $A$ and $\Uu$ is the image of $\Tt$ in $\Ss^{-1}A$, then the rings of fractions $(\Ss\Tt)^{-1}A$ and $\Uu^{-1}(\Ss^{-1}A)$ are isomorphic. In particular, if $\Ss\subset\Tt$ and $1\in\Ss$, we have $\Tt^{-1}A\cong\Uu^{-1}(\Ss^{-1}A)$.

(ii) If $\psi:A\to B$ is a homomorphism of rings, $\Ss$ is a multiplicatively closed set and $\Tt:=\psi(\Ss)$, then $\Ss^{-1}B$ and $\Tt^{-1}B$ are isomorphic as $\Ss^{-1}A$-modules.

(iii) A multiplicatively closed set $\Ss$ of $A$ is \em saturated \em if whenever a product $ab$ of elements of $A$ belongs to $\Ss$, then both elements $a,b\in\Ss$. Given a multiplicatively closed set $\Ss\subset A$, there exists a unique smallest saturated multiplicatively closed set $\Ss^*$ containing $\Ss$, which is called the \em saturation of $\Ss$\em. It holds that $\Ss^*$ is the complement in $A$ of the union of the prime ideals of $A$ that do not meet $\Ss$.

(iv) If $\Ss\subset\Tt$ are multiplicatively closed subsets of $A$, then the homomorphism $\phi:\Ss^{-1}A\to\Tt^{-1}A,\ \frac{a}{s}\mapsto\frac{a}{s}$ is an isomorphism if and only if $\Tt\subset\Ss^*$.
\end{remarks}

\begin{thm}\label{norexcell}
Let $(X^\nu,\an_{X^\nu},\pi)$ be the normalization of $(X,\an_X)$ and let $\Tt_{K^*}$ be the multiplicatively closed set of all holomorphic functions on $X^\nu$ that do not vanish at the compact set $K^*:=\pi^{-1}(K)$. Then $\Ss_K^{-1}\ol{\an(X)}^\nu\cong\Tt_{K^*}^{-1}\an(X^\nu)$ is the integral closure of $\Ss_K^{-1}\an(X)$ in its total ring of fractions $\Ss_K^{-1}\mer(X)$ and a finitely generated $\Ss_K^{-1}\an(X)$-module.
\end{thm}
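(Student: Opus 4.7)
The plan is to establish the theorem in three stages: first prove that $\Ss_K^{-1}\an(X^\nu)$ is a finitely generated $\Ss_K^{-1}\an(X)$-module (the key technical step), then identify it with $\Tt_{K^*}^{-1}\an(X^\nu)$ via a Cartan--Oka-Weil argument, and finally invoke the determinant trick to place the common ring inside the integral closure of $\Ss_K^{-1}\an(X)$ in its total ring of fractions. As in Step 4 of the proof of Theorem \ref{excell0}, passing to the holomorphic hull $\hat{K}$ reduces everything to the case where $K$ is holomorphically convex, which I assume throughout.

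For the finite generation, the coherent sheaf $\pi_*\an_{X^\nu}\cong\an_X^\nu$ of $\an_X$-modules on the Stein space $X$ admits by Cartan's Theorem A global sections $V_1,\ldots,V_r\in\an(X^\nu)$ generating it on an open neighborhood $\Omega$ of $K$. Given $F\in\an(X^\nu)$, form the coherent quotient $\Fhaz:=\pi_*\an_{X^\nu}/(V_1,\ldots,V_r)\an_X$ (which vanishes on $\Omega$) and the coherent annihilator ideal $\operatorname{Ann}([F])\subset\an_X$ of the class $[F]\in H^0(X,\Fhaz)$. Since $\operatorname{Ann}([F])|_\Omega=\an_X|_\Omega$, the Cartan--Oka-Weil construction from \ref{conto} yields $H\in\Ss_K\cap H^0(X,\operatorname{Ann}([F]))$, and the Cartan B short exact sequence $0\to\sum_i\an(X)V_i\to\an(X^\nu)\to H^0(X,\Fhaz)\to 0$ then forces $HF=\sum_ia_iV_i$ with $a_i\in\an(X)$. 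Hence $F=\sum_i(a_i/H)V_i\in\sum_i\Ss_K^{-1}\an(X)\cdot V_i$, so $\Ss_K^{-1}\an(X^\nu)=\sum_i\Ss_K^{-1}\an(X)\cdot V_i$ is a finitely generated $\Ss_K^{-1}\an(X)$-module.

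For the isomorphism $\Ss_K^{-1}\an(X^\nu)\cong\Tt_{K^*}^{-1}\an(X^\nu)$ induced by the inclusion $\Ss_K\hookrightarrow\Tt_{K^*}$ through $\pi^*$, I treat surjectivity and injectivity separately. Given $F/G$ with $G\in\Tt_{K^*}$, consider the coherent ideal sheaf $\mathcal{J}\subset\an_X$ with stalks $\mathcal{J}_x:=\{h_x\in\an_{X,x}:h_xF_x\in G_x(\pi_*\an_{X^\nu})_x\}$; since $G(y)\neq 0$ for every $y\in K^*$ and $(\pi_*\an_{X^\nu})_x=\prod_{y\in\pi^{-1}(x)}\an_{X^\nu,y}$ is a finite product of local integral domains for $x\in K$ by Theorem \ref{normalization0}(i), the germ $G_x$ is a unit there, so $\mathcal{J}|_\Omega=\an_X|_\Omega$ on a neighborhood $\Omega$ of $K$; \ref{conto} then produces $H\in\Ss_K\cap H^0(X,\mathcal{J})$ with $HF/G\in\an(X^\nu)$, giving $F/G=(HF/G)/H$. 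For injectivity, if $GF=0$ in $\an(X^\nu)$ with $G\in\Tt_{K^*}$, then on each connected (equivalently, irreducible) component $X_k^\nu$ of $X^\nu$ meeting $K^*$ the integral domain $\an(X_k^\nu)$ forces $F|_{X_k^\nu}=0$; the proof of Lemma \ref{elimsob} applied to the union $Y$ of irreducible components of $X$ disjoint from $K$ yields $H'\in\Ss_K\cap\Jhaz(Y)$ with $H'F=0$ in $\an(X^\nu)$.

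Finally, every $F\in\an(X^\nu)$ acts by multiplication on the faithful (since $1$ lies in it) finitely generated $\Ss_K^{-1}\an(X)$-module $\Ss_K^{-1}\an(X^\nu)$, so by the classical determinant trick $F$ is integral over $\Ss_K^{-1}\an(X)$. By Lemma \ref{elimsob} the total ring of fractions of $\Ss_K^{-1}\an(X)$ is $\Ss_K^{-1}\mer(X)$, and since integral closure commutes with localization, the integral closure of $\Ss_K^{-1}\an(X)$ in $\Ss_K^{-1}\mer(X)$ equals $\Ss_K^{-1}\ol{\an(X)}^\nu$. Combining the previous steps, $\Tt_{K^*}^{-1}\an(X^\nu)=\Ss_K^{-1}\an(X^\nu)\subset\Ss_K^{-1}\ol{\an(X)}^\nu$; the reverse inclusion is immediate from $\ol{\an(X)}^\nu\subset\an(X^\nu)$, so all three rings coincide, form a finitely generated $\Ss_K^{-1}\an(X)$-module, and are the integral closure of $\Ss_K^{-1}\an(X)$ in $\Ss_K^{-1}\mer(X)$. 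The main obstacle is the twin Cartan--Oka-Weil productions of $H\in\Ss_K$ inside the coherent ideal sheaves $\operatorname{Ann}([F])$ and $\mathcal{J}$: in each case one must verify that the sheaf degenerates to $\an_X$ throughout a neighborhood of $K$ so that the approximation of $1$ on $K$ by global sections from \ref{conto} becomes available.
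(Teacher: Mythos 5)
Your proof is correct, but it follows a genuinely different route from the paper's. The paper reduces in four steps to the case where $K$ is a single point $x$ (writing $\Ss_K^{-1}\ol{\an(X)}^\nu$ and $\Tt_{K^*}^{-1}\an(X^\nu)$ as intersections $\bigcap_{x\in K}(\cdot)_{\gtm_x}$ via Corollary \ref{closures}), and then argues locally: normality of $\Tt_{\pi^{-1}(x)}^{-1}\an(X^\nu)$ (via excellence and completions) gives one inclusion, and the finiteness theorem of \cite[Thm.1.1]{abf1} gives $\Tt_{\pi^{-1}(x)}^{-1}\an(X^\nu)$ as a finite $\an(X)_{\gtm_x}$-module, hence integral, for the other. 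You instead stay global over the whole compact $K$: Cartan's Theorem A applied to the coherent sheaf $\pi_*\an_{X^\nu}$ plus the Oka--Weil approximation of $1$ on $K$ inside the coherent annihilator ideal (exactly the mechanism of Lemma \ref{closure} and \ref{conto}) yields finite generation of $\Ss_K^{-1}\an(X^\nu)$ directly, after which the determinant trick and localization of the inclusion $\ol{\an(X)}^\nu\subset\an(X^\nu)$ from Theorem \ref{normalization0}(iv) close the argument. This buys you independence from the external result \cite{abf1} and from the normality/completion analysis of Step 5, at the price of re-proving finite generation by hand; conversely, the paper's pointwise reduction is what lets it quote \cite{abf1} verbatim. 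Two small points you should tighten. First, your opening reduction to holomorphically convex $K$ cites only Step 4 of Theorem \ref{excell0}, which handles the $\Ss_K$ side; you also need the $\Tt_{K^*}$ side, i.e.\ that $\pi^{-1}(\hat{K})$ is holomorphically convex and that the image of $\Tt_{K^*}$ in $\Tt_{\pi^{-1}(\hat K)}^{-1}\an(X^\nu)$ is the saturation of the image of $\Ss_K$ — this is the content of Step 3 of the paper's proof and should be invoked or reproduced. Second, in the surjectivity step the conclusion $HF\in G\,\an(X^\nu)$ from the stalkwise condition $H_xF_x\in G_x(\pi_*\an_{X^\nu})_x$ uses Cartan's Theorem B (surjectivity of $H^0(X,\pi_*\an_{X^\nu})\xrightarrow{\,G\,}H^0(X,G\,\pi_*\an_{X^\nu})$); and in the injectivity step the function you annihilate should be $W$, not $F$. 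Neither affects the validity of the argument.
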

\begin{proof}
First, by the splitting of normalization \cite[1.5.20]{jp} and \cite[33.H, Th.78, p.257]{m} it holds that $\Tt_{K^*}^{-1}\an(X^\nu)$ is a finitely generated $\Ss_K^{-1}\an(X)$-module. The proof of the first part of the statement is conducted in several steps. The first four steps have the purpose of reducing the proof to the case: \em $X$ is irreducible, $K$ is a singleton and $K^*$ is a finite set\em.

\noindent{\bf Step 1.} {\em Reduction to the case in which $X$ has finitely many irreducible components and $K$ meets all of them.} Let $\{X_i\}_{i=1}^m$ be the irreducible components of $X$ that meet $K$. As $\pi:X^\nu\to X$ induces a bijection between the irreducible components of $X$ and $X^\nu$, the irreducible components of $X^\nu$ that meet $K^*$ are $X^\nu_i:=\cl(\pi^{-1}(X_i\setminus\Sing(X)))$ for $i=1,\ldots,m$. Denote ${X^\nu}':=\bigcup_{i=1}^mX^\nu_i$ and $X':=\bigcup_{i=1}^mX_i$. Observe that $(X',\an_{X'}:=\an_X|_{X'})$ and $({X^\nu}',\an_{X^\nu}':=\an_{X^\nu}|_{{X^\nu}'})$ are Stein spaces because they are complex analytic subsets of the Stein spaces $(X,\an_X)$ and $(X^\nu,\an_{X^\nu})$. In addition, $\pi:{X^\nu}'\to X'$ is the normalization of $X'$, so $\mer(X')\cong\mer({X^\nu}')$. By Lemma \ref{elimsob}
\begin{align*}
\Ss_K^{-1}\an(X)\cong{\Ss'}_K^{-1}\an(X')\quad&\text{and}\quad\Tt_{K^*}^{-1}\an(X^\nu)\cong{\Tt'}_{K^*}^{-1}\an({X^\nu}'),\\
\Ss_K^{-1}\mer(X)\cong\mer(X')\quad&\text{and}\quad\Tt_{K^*}^{-1}\mer(X)\cong\mer({X^\nu}'),
\end{align*}
where ${\Ss'}_K:=\{F\in\an(X'):\ \{F=0\}\cap K=\varnothing\}$ and ${\Tt'}_{K^*}:=\{G\in\an({X^\nu}'):\ \{G=0\}\cap K^*=\varnothing\}$ (recall that $\mer(X)\cong\mer(X^\nu)$). Note that ${\Ss'}_K^{-1}\an(X')$ is a subring of $\mer(X')$ and ${\Tt'}_{K^*}^{-1}\an({X^\nu}')$ is a subring of $\mer({X^\nu}')$. Let $\ol{\an(X)}^\nu$ be the integral closure of $\an(X)$ in $\mer(X)$ and let $\ol{\an(X')}^\nu$ be the integral closure of $\an(X')$ in $\mer(X')$. By \cite[Prop. 5.7]{am} the integral closure of $\Ss_K^{-1}\an(X)$ in $\Ss_K^{-1}\mer(X)\cong\mer(X')$ is $\Ss_K^{-1}\ol{\an(X)}^\nu$. Analogously, the integral closure of ${\Ss'}_K^{-1}\an(X')$ in ${\Ss'}_K^{-1}\mer(X')\cong\mer(X')$ is ${\Ss'}_K^{-1}\ol{\an(X')}^\nu$. Consequently $\Ss_K^{-1}\ol{\an(X)}^\nu\cong{\Ss'}_K^{-1}\ol{\an(X')}^\nu$ is the integral closure of $\Ss_K^{-1}\an(X)\cong{\Ss'}_K^{-1}\an(X')$ in $\mer(X')\cong \Ss_K^{-1}\mer(X)$.

Thus, we have to show that ${\Ss'}_K^{-1}\ol{\an(X')}^\nu\cong{\Tt'}_{K^*}^{-1}\an({X^\nu}')$. We assume in the following that $X$ has finitely many irreducible components.

\noindent{\bf Step 2.} {\em Reduction to the case in which $X$ is irreducible.} By the splitting of normalization \cite[1.5.20]{jp} and basic properties of rings of fractions \cite[\S3, \S5]{am} there exist isomorphisms
$$
{\Ss}_K^{-1}\ol{\an(X)}^\nu\cong\ol{{\Ss}_K^{-1}\an(X)}^\nu\cong\prod_{i=1}^m\ol{{\Ss}_K^{-1}(\an(X)/\Jhaz(X_i))}^\nu\cong\prod_{i=1}^m\ol{{\Ss'}_{K_i}^{-1}\an(X_i)}^\nu\cong\prod_{i=1}^m{\Ss'}_{K_i}^{-1}\ol{\an(X_i)}^\nu
$$
where $K_i:=K\cap X_i$ and ${\Ss'}_{K_i}:=\{F\in\an(X_i):\ \{F=0\}\cap K_i=\varnothing\}$. In addition, $\an(X^\nu)\cong\prod_{i=1}^m\an(X^\nu_i)$ (as $X^\nu_1,\ldots,X^\nu_m$ are the connected components of $X^\nu$ because $(X^\nu,\an_{X^\nu})$ is a normal complex analytic space), so
$$
{\Tt}_{K^*}^{-1}\an(X^\nu)\cong{\Tt}_{K^*}^{-1}\Big(\prod_{i=1}^m\an(X^\nu_i)\Big)\cong\prod_{i=1}^m{\Tt}_{K^*}^{-1}\an(X^\nu_i)\cong\prod_{i=1}^m{\Tt}_{K^*_i}^{-1}\an(X^\nu_i)
$$
where $K_i^*:=K^*\cap X^\nu_i=\pi^{-1}(K)\cap X^\nu_i=(\pi|_{X^\nu_i})^{-1}(K\cap X_i)$. Thus, it is enough to show 
$$
{\Ss'}_{K_i}^{-1}\ol{\an(X_i)}^\nu\cong{\Tt}_{K^*_i}^{-1}\an(X^\nu_i),
$$
that is, we assume in the following $(X,\an_X)$ is irreducible. 

\noindent{\bf Step 3.} {\em Reduction to the case in which $K$ and $K^*$ are holomorphically convex.} Let $\hat{K}$ be the holomorphic convex hull of $K$ in $X$. Let $\Ss'$ be the homomorphic image of $\Ss_K$ in $\Ss_{\hat{K}}^{-1}\an(X)$. As $\Ss_{\hat{K}}\subset\Ss_K$ and $1\in\Ss_{\hat{K}}$, we have $\Ss_{K}^{-1}\an(X)\cong\Ss'^{-1}(\Ss_{\hat{K}}^{-1}\an(X))$. 

We claim: \em As $\hat{K}$ is holomorphically convex, $\hat{K}^*:=\pi^{-1}(\hat{K})$ is holomorphically convex too\em.

Pick a point $z\in X^\nu\setminus\hat{K}^*$. Then $\pi(z)\in X\setminus\hat{K}$, so there exists a holomorphic function $F$ on $X$ such that $\sup_{\hat{K}}(F)<|F(\pi(z))|$. Denote $G:=F\circ\pi\in\an(X^\nu)$ and observe that $\sup_{\hat{K}^*}(G)=\sup_{\hat{K}}(F)<|F(\pi(z))|=|G(z)|$. Thus, $z$ does no belong to the holomorphic convex hull of $\hat{K}^*$ in $X^\nu$. Consequently, $\hat{K}^*$ is holomorphically convex.

As $K^*\subset\hat{K}^*$, we have $\Tt_{\hat{K}^*}\subset\Tt_{K^*}$. As $1\in\Tt_{\hat{K}^*}$, we have $\Tt_{K^*}^{-1}\an(X^\nu)\cong\Tt'^{-1}(\Tt_{\hat{K}^*}^{-1}\an(X^\nu))$ where $\Tt'$ is the homomorphic image of $\Tt_{K^*}$ in $\Tt_{\hat{K}^*}^{-1}\an(X^\nu)$. If we prove that $\Tt_{\hat{K}^*}^{-1}\an(X^\nu)$ is the integral closure of $\Ss_{\hat{K}}^{-1}\an(X)$ in $\mer(X)$, we will have by \cite[Prop. 5.12]{am} that $\Ss'^{-1}(\Tt_{\hat{K}^*}^{-1}\an(X^\nu))$ is the integral closure of $\Ss_{K}^{-1}\an(X)\cong\Ss'^{-1}(\Ss_{\hat{K}}^{-1}\an(X))$ in $\mer(X)=\mer(X^\nu)$. As $\Tt'$ is the saturation of $\Ss'$ in $\Tt_{\hat{K}^*}^{-1}\an(X^\nu)$, we deduce
$$
\Tt_{K^*}^{-1}\an(X^\nu)\cong\Tt'^{-1}(\Tt_{\hat{K}^*}^{-1}\an(X^\nu))\cong\Ss'^{-1}(\Tt_{\hat{K}^*}^{-1}\an(X^\nu)).
$$
Consequently, $\Tt_{K^*}^{-1}\an(X^\nu)$ is the integral closure of $\Ss_{K}^{-1}\an(X)$ in $\mer(X)=\mer(X^\nu)$. In the following we assume in addition that $K$ and $K^*$ are holomorphically convex.

\noindent{\bf Step 4.} {\em Reduction to the case in which $K$ is a singleton (and $K^*$ a finite set).} As $K$ and $K^*:=\pi^{-1}(K)$ are holomorphically convex, we have by Corollary \ref{closures}
\begin{align*}
&{\Ss}_{K}^{-1}\ol{\an(X)}^\nu=\bigcap_{x\in K}\ol{\an(X)}^\nu_{\gtm_x},\\ 
&\Tt_{K^*}^{-1}\an(X^\nu)=\bigcap_{y\in K^*}\an(X^\nu)_{\gtn_y}=\bigcap_{x\in K}\bigcap_{y\in\pi^{-1}(x)}\an(X^\nu)_{\gtn_y}=\bigcap_{x\in K}\Tt_{\pi^{-1}(x)}\an(X^\nu).
\end{align*}
By \cite[Prop. 5.12]{am} $\ol{\an(X)}^\nu_{\gtm_x}$ is the integral closure of $\an(X)_{\gtm_x}$ in $\mer(X)$. Thus, it is enough to show: \em $\ol{\an(X)}^\nu_{\gtm_x}$ and $\Tt_{\pi^{-1}(x)}\an(X^\nu)$ are isomorphic under $\pi^*$\em. 

\noindent{\bf Step 5.} Let us prove: {\em $\ol{\an(X)}^\nu_{\gtm_x}\overset{\pi^*}{\cong}\Tt_{\pi^{-1}(x)}\an(X^\nu)$ for each $x\in X$.} 

Write $\pi^{-1}(x)=\{y_1,\ldots,y_r\}$. By Lemma \ref{excell0} $\an(X^\nu)_{\gtn_{y_i}}$ is an excellent ring. We have the following commutative diagram of regular homomorphisms.
$$
\xymatrix{
\an(X^\nu)_{\gtn_{y_i}}\ar@{^{(}->}[r]\ar@{^{(}->}[d]&\an_{X^\nu,y_i}\ar@{^{(}->}[d]\\
\widehat{\an(X^\nu)_{\gtn_{y_i}}}\ar[r]^{\cong}&\widehat{\an_{X^\nu,y_i}}
}
$$
As $X^\nu$ is a normal Stein space, $\an_{X^\nu,y_i}$ is a normal ring. By \cite[VII.2.2(d)]{abr} the completion $\widehat{\an_{X^\nu,y_i}}$ of the local ring $\an_{X^\nu,y_i}$ (with respect to its maximal ideal) is a normal ring. Consequently, $\an(X^\nu)_{\gtn_{y_i}}$ is by \cite[VII.2.2(d)]{abr} a normal ring. The field of fractions of $\an(X^\nu)_{\gtn_{y_i}}$ is $\mer(X^\nu)$ for each $y_i$. By Corollary \ref{closures} and \cite[2.1.15]{hs} the ring of fractions $\Tt_{\pi^{-1}(x)}^{-1}(\an(X^\nu))=\bigcap_{i=1}^r\an(X^\nu)_{\gtn_{y_i}}$ is a normal ring. Denote $\Ss:=\pi^*(\an(X)\setminus\gtm_x)$ and observe that the natural homomorphism $\Ss^{-1}\an(X^\nu)\to\Tt_{\pi^{-1}(x)}^{-1}(\an(X^\nu))$ is an isomorphism because the saturation of $\Ss$ is $\Tt_{\pi^{-1}(x)}$. We have the following commutative diagrams.
$$
\xymatrix{
\an(X)\ar@{^{(}->}[r]^{\pi^*}\ar@{^{(}->}[d]&\an(X^\nu)\ar@{^{(}->}[d]\\
\mer(X)\ar[r]^{\pi^*}_{\cong}&\mer(X^\nu)
}\begin{array}{c}\vspace{-1.5cm}\leadsto\end{array}
\xymatrix{
\an(X)_{\gtm_x}\ar@{^{(}->}[r]^{\pi^*}\ar@{^{(}->}[d]&\Ss^{-1}\an(X^\nu)\ar[r]^(0.4){\cong}&\Tt_{\pi^{-1}(x)}^{-1}(\an(X^\nu))\ar@{^{(}->}[d]\\
\mer(X)\ar[rr]^{\pi^*}_{\cong}&&\mer(X^\nu)
}
$$
As $\Tt_{\pi^{-1}(x)}^{-1}\an(X^\nu)$ is a normal ring, $\ol{\an(X)}^\nu_{\gtm_x}\hookrightarrow\Tt_{\pi^{-1}(x)}^{-1}\an(X^\nu)$. By \cite[Thm.1.1]{abf1} there exists finitely many $H_1,\ldots,H_s\in\an(X^\nu)$ such that 
$$
\Tt_{\pi^{-1}(x)}\an(X^\nu)=H_1\cdot\pi^*(\an(X)_{\gtm_x})+\cdots+H_s\cdot\pi^*(\an(X)_{\gtm_x}).
$$
This means that $\Tt_{\pi^{-1}(x)}\an(X^\nu)$ is a finitely generated $\an(X)_{\gtm_x}$-module, so $\Tt_{\pi^{-1}(x)}\an(X^\nu)$ is by \cite[Prop. 5.1]{am} an integral extension of $\an(X)_{\gtm_x}$, that is, $\Tt_{\pi^{-1}(x)}\an(X^\nu)\hookrightarrow\ol{\an(X)}^\nu_{\gtm_x}$. Consequently, $\Tt_{\pi^{-1}(x)}\an(X^\nu)\cong\ol{\an(X)}^\nu_{\gtm_x}$, as required.
\end{proof}

\subsection{Normalization of an algebraic set endowed with its Stein structure}\label{ssas}
Let $X\subset\C^n$ be an algebraic set and let $\I(X):=\{P\in\C[\x]:\ P|_X=0\}$. We have: \em The equality 
\begin{equation}\label{magic}
\Jhaz_{X,x}:=\{f_x\in\an_{\C^n,x}:\ X_x\subset\{f_x=0\}\}=\I(X)\an_{\C^n,x}
\end{equation}
holds for each $x\in\C^n$\em, see \cite[\S2]{s}. We include here a straightforward proof for the sake of completeness.
\begin{proof}
Pick a point $x\in\C^n$. Assume that it is the origin and let $\gtm$ be the maximal ideal of $\C[\x]$ associated with it. The completion of the local ring $A:=(\C[\x]/\I(X))_{\gtm}$ is $\widehat{A}=\C[[\x]]/(\I(X)\C[[\x]])$. As $A$ is a reduced excellent ring, also $\widehat{A}$ is a reduced excellent ring \cite[VII.2.2(d)]{abr}. Consequently, the ideal $\I(X)\C[[\x]]$ is radical. Consider the local analytic ring $B:=\an_{\C^n,0}/(\I(X)\an_{\C^n,0})$ and observe that its completion is 
$$
\widehat{B}=\C[[\x]]/(\I(X)\an_{\C^n,0}\C[[\x]])=\C[[\x]]/(\I(X)\C[[\x]])=\widehat{A},
$$ 
so $B$ is a reduced excellent ring \cite[VII.2.2(d)]{abr}. Thus, $\I(X)\an_{\C^n,0}$ is a radical ideal. By Hilbert's Nullstellensatz for $\an_{\C^n,0}$ we deduce $\Jhaz_{X,0}=\Jhaz(\ceros(\I(X)\an_{\C^n,0}))=\I(X)\an_{\C^n,x}$, as required.
\end{proof}

As $(\C^n,\an_{\C^n})$ is a Stein manifold and $(X,\an_X:=\an_{\C^n}|_X)$ is a complex analytic subspace, it holds by Cartan's Theorem B that $\an(X):=\an(\C^n)/\Jhaz(X)$ where $\Jhaz(X):=H^0(\C^n,\Jhaz_X)$. In addition $(X,\an_X)$ is a Stein space. Let $P_1,\ldots,P_m$ be a system of generators of $\I(X)$. By \eqref{magic} $P_1,\ldots,P_m$ generate $\Jhaz_{X,x}$ for each $x\in\C^n$. By \cite[VIII.A.Thm.15]{gr2} $P_1,\ldots,P_m$ generate $\Jhaz(X)$ as an $\an(X)$-module, so $\Jhaz(X)=\I(X)\an(X)$. Let $(X^\mu\subset\C^{n+m},\rho)$ be the algebraic normalization of the algebraic set $X\subset\C^n$. 

\begin{proof}[Proof of Theorem \em \ref{algebraiccase}]
As $A:=\C[\x]/\I(X)$ is an excellent ring, the integral closure $\ol{A}^\nu$ of $A$ in its total ring of fractions $Q(A)$ is a finitely generated $A$-module. Let $H_1,\ldots,H_m\in\ol{A}^\nu$ be a finite system of generators of $\ol{A}^\nu$ as an $A$-module. It holds that $\ol{A}^\nu=A[H_1,\ldots,H_m]\cong\C[\x,\y]/\I(X^\mu)$ and $\rho:X^\mu\to X,\ (x,y)\mapsto x$ is the normalization map. 

Fix $x\in X$ and let $R:=A_{\gtm_x}$. Denote $\Ss:=\rho^*(A\setminus\gtm_x)$ and $\Tt:=\ol{A}^\nu\setminus(\gtn_{y_1}\cup\cdots\cup\gtn_{y_r})$ where $\rho^{-1}(x)=\{y_1,\ldots,y_r\}$ and $\gtn_{y_i}$ is the maximal ideal of $\ol{A}^\nu$ associated with $y_i$ for $i=1,\ldots,r$. The integral closure of $R$ in $Q(A)_{\gtm_x}$ is by \cite[Prop. 5.7]{am} $\ol{R}^\nu=\Ss^{-1}\ol{A}^\nu$ and $H_1,\ldots,H_m$ generates $\ol{R}^\nu$ as an $R$-module. The natural homomorphism $\Ss^{-1}\ol{A}^\nu\to\Tt^{-1}\ol{A}^\nu$ is an isomorphism of $R$-modules because the saturation of $\Ss$ in $\ol{A}^\nu$ is $\Tt$. The maximal ideals of the semilocal ring $\ol{R}^\nu=\Tt^{-1}\ol{A}^\nu$ are $\gtn_i:=\gtn_{y_i}\Tt^{-1}\ol{A}^\nu$ for $i=1,\ldots,r$. Let $\gtp_1,\ldots,\gtp_t$ be the minimal prime ideals of the completion $\widehat{R}$. By \cite[VII.3.1]{abr} it holds $t=r$ and (after reordering the indices) $\widehat{\ol{R}^\nu_{\gtn_i}}\cong\ol{(\widehat{R}/\gtp_i)}^\nu$ for $i=1,\ldots,r$ (here is `hidden' the use of Zariski Main Theorem!). As $\ol{R}^\nu\setminus\gtn_i$ is the image of $\ol{A}^\nu\setminus\gtn_{y_i}$ in $\ol{R}^\nu=\Tt^{-1}\ol{A}^\nu$ and $\Tt\subset\ol{A}^\nu\setminus\gtn_{\y_i}$, we have $\ol{R}^\nu_{\gtn_i}\cong\ol{A}^\nu_{\gtn_{y_i}}$ for $i=1,\ldots,r$. Thus, by \cite[1.5.20]{jp}
$$
\ol{\widehat{R}}^\nu\cong\prod_{i=1}^m\ol{(\widehat{R}/\gtp_i)}^\nu\cong\prod_{i=1}^m\widehat{\ol{R}^\nu_{\gtn_i}}\cong\prod_{i=1}^m\widehat{\ol{A}^\nu_{\gtn_{y_i}}}.
$$
By \eqref{magic} $\widehat{\ol{A}^\nu_{\gtn_{y_i}}}\cong\widehat{\an_{X^\mu,y_i}}$ and by \cite[6.1.18]{jp} $\rho_*(\an_{X^\mu})_x\cong\prod_{i=1}^m\an_{X^\mu,y_i}$. Thus, by \cite[24.C, p.174]{m}
$$
\ol{\widehat{R}}^\nu\cong\prod_{i=1}^m\widehat{\an_{X^\mu,y_i}}\cong\widehat{\rho_*(\an_{X^\mu})_x}.
$$
The completion of the ring $\rho_*(\an_{X^\mu})_x$ is considered with respect to its Jacobson radical ideal. As $\rho_*(\an_{X^\mu})_x$ is a finitely generated $\an_{X,x}$-module, we deduce by \cite[23.K, Thm.55, p.170]{m}
$$
\widehat{\rho_*(\an_{X^\mu})_x}\cong\widehat{\an_{X,x}}\otimes_{\an_{X,x}}\rho_*(\an_{X^\mu})_x.
$$
By \eqref{magic} $\widehat{R}\cong\widehat{\an_{X,x}}$. As $\ol{R}^\nu$ is a finitely generated $R$-module, \cite[23.K, Thm.55, p.170]{m} implies
$$
\ol{\widehat{R}}^\nu\cong\ol{R}^\nu\otimes_R\widehat{R}\cong(H_1R+\cdots+H_mR)\otimes_R\widehat{R}\cong H_1\widehat{\an_{X,x}}+\cdots+H_m\widehat{\an_{X,x}}.
$$
As $H_1,\ldots,H_m\in\ol{A}^\nu\subset\an(X^\mu)$, it holds that $H_1\an_{X,x}+\cdots+H_m\an_{X,x}$ is a $\an_{X,x}$-submodule of $\rho_*(\an_{X^\mu})_x$. In addition,
\begin{multline*}
\widehat{\an_{X,x}}\otimes_{\an_{X,x}}\rho_*(\an_{X^\mu})_x\cong\widehat{\rho_*(\an_{X^\mu})_x}\\
\cong\ol{\widehat{R}}^\nu\cong H_1\widehat{\an_{X,x}}+\cdots+H_m\widehat{\an_{X,x}}\cong\widehat{\an_{X,x}}\otimes_{\an_{X,x}}(H_1\an_{X,x}+\cdots+H_m\an_{X,x})
\end{multline*}
As the homomorphism $\an_{X,x}\hookrightarrow\widehat{\an_{X,x}}$ is faithfully flat, $\rho_*(\an_{X^\mu})_x\cong H_1\an_{X,x}+\cdots+H_m\an_{X,x}$. As this happens for each $x\in X$, we deduce by \cite[VIII.A.Thm.15]{gr2} that the polynomials $H_1,\ldots,H_m\in\ol{A}^\nu$ generate $\an(X^\mu)$ as a $\an(X)$-module. As each $H_i$ is integral over $\an(X)$, we conclude $\ol{\an(X)}^\nu\cong\an(X^\mu)$. By Theorems \ref{main} and \ref{normalization0} the tuple $(X^\mu,\an_{X^\mu},\rho)$ is isomorphic to the (analytic) normalization $((X^\nu,\an_{X^\nu}),\pi)$ of $(X,\an_X)$. As $\an(X^\mu)$ is a finitely generated $\an(X)$-module, the same happens with $\an(X^\nu)$, as required.
\end{proof}

\subsection{Integral closure different from normalization ring}
We construct next an irreducible Stein space $(X,\an_X)$ of dimension $2$ such that the ring $\an(X^\nu)$ does not coincide with the integral closure $\ol{\an(X)}^\nu$ of $\an(X)$ in the field $\mer(X)$. 

\begin{example}\label{dim2}
Let $F\in\an(\C)$ be a holomorphic function on $\C$ whose zero-set is $\N^*:=\{2,3,\ldots\}$ and such that ${\rm mult}_k(F)=k$ for each $k\in\N^*$. Let $H\in\an(\C)$ be a holomorphic function on $\C$ whose zero-set is $\N^*$ and such that ${\rm mult}_k(H)=1$ for each $k\in\N^*$.

Consider the open subset $\Omega:=\{{\rm Re}(y-x+\frac{1}{2})>0\}\subset\C^3$, which a Stein manifold endowed with the coherent sheaf $\an_\Omega:=\an_{\C^3}|_{\Omega}$. The pair $(X:=\{F(x)-zF(y)=0\}\cap\Omega,\an_\Omega|_{X})$ is a Stein space. We claim: \em $X$ is irreducible and $\ol{\an(X)}^\nu\neq\an(X^\nu)$.\em
\end{example}
\begin{proof}
As ${\rm mult}_k(F)=k$ for each $k\in\N^*$, we have $\{F=0\}\subset\{F'=0\}$. Consequently, 
$$
\Sing(X)=\{F(x)=0,F(y)=0\}\cap\Omega=\{(k,j,z):\ k,j\in\N^*,\ k\leq j,\ z\in\C\}.
$$ 

Denote $\Theta:=\{{\rm Re}(y-x+\frac{1}{2})>0\}\subset\C^2$. Observe that $\Reg(X):=X\setminus\Sing(X)$ is analytically diffeomorphic to $\Theta\setminus(\C\times\N^*)$ via the biholomorphic map
$$
\Phi:\Reg(X)\to\Theta\setminus(\C\times\N^*),\ (x,y,z)\mapsto(x,y),
$$
whose inverse map is
$$
\Phi^{-1}:\Theta\setminus(\C\times\N^*)\to\Reg(X),\ (x,y)\mapsto\Big(x,y,\frac{F(x)}{F(y)}\Big).
$$
As $\Theta\setminus(\C\times\N^*)$ is connected, $X$ is irreducible. If $a:=(k,j,z_0)\in X$, then the germ $X_a$ is analytically equivalent to the germ at the origin of equation $\{x^k-(z+z_0)y^j=0\}$. We claim: \em if $a:=(k,k,0)$, then $X_a\cong\{x^k-zy^k=0\}$ is an irreducible analytic germ\em. 

If $x^k-zy^k=\lambda(x,y,z)\mu(x,y,z)$ where $\lambda,\mu\in\an_{\C^3,0}$, we may assume $\lambda:=x^r+\lambda_{r+1}$ and $\mu:=x^s+\mu_{s+1}$ where $r,s\geq0$, $r+s=k$, $\lambda_{r+1}\in\gtm^{r+1}$ and $\mu_{s+1}\in\gtm^{s+1}$. Thus,
$$
x^k-zy^k=x^k+x^r\mu_{s+1}+x^s\lambda_{r+1}+\lambda_{r+1}\mu_{s+1}\quad\leadsto\quad -zy^k=x^r\mu_{s+1}+x^s\lambda_{r+1}+\lambda_{r+1}\mu_{s+1}.
$$
As $-zy^k\in\gtm^{k+1}$ and $\lambda_{r+1}\mu_{s+1}\in\gtm^{k+2}$, we deduce $r=0$ or $s=0$, which means that either $\lambda$ or $\mu$ is a unit. Consequently $X_a$ is irreducible.

Let us prove: \em $G(x,y):=\frac{H(x)}{H(y)}\in\mer(X)$ is a weakly holomorphic function on $X$\em. Consequently, \em$\frac{H(x)}{H(y)}$ defines and element of $\an(X^\nu)$\em. 

Fix $a:=(k,j,z_0)\in\Sing(X)$. We have $k\leq j$ and 
$$
\frac{H(x)}{H(y)}=\frac{(x-k)\xi_1(x)}{(y-j)\zeta_1(y)}\quad\text{and}\quad\frac{F(x)}{F(y)}=\frac{(x-k)^k\xi_2(x)}{(y-j)^j\zeta_2(y)}
$$
where $\xi_i(x),\zeta_i(y)$ are units in $\an_{\C^3,a}$. Thus, in $X_a$ it holds that
$$
\frac{H(x)^k}{H(y)^j}\cdot\frac{\zeta_1(y)^j}{\xi_1(x)^k}\cdot\frac{\xi_2(x)}{\zeta_2(y)}=\frac{F(x)}{F(y)}=z.
$$
As $k\leq j$, that is, $j-k\geq0$, we have
$$
\frac{H(x)^k}{H(y)^k}=zH(y)^{j-k}\cdot\frac{\xi_1(x)^k}{\zeta_1(y)^j}\cdot\frac{\zeta_2(y)}{\xi_2(x)}\in\an_{X,a},
$$
so $G(x,y)=\frac{H(x)}{H(y)}$ is locally bounded in a neighborhood of $a$. We conclude that $G(x,y)$ is a weakly holomorphic function on $X$. We claim: \em $G\not\in\ol{\an(X)}^\nu$\em. 

Suppose by contradiction $G\in\ol{\an(X)}^\nu$. There exists $k\geq1$ and $B_0,\ldots,B_{k-1}\in\an(\Omega)$ such that $G^k+\sum_{i=0}^{k-1}B_iG^i$ is identically zero on $X$. For $a:=(k+1,k+1,0)$ the germ $G^k_a+\sum_{i=0}^{k-1}B_{i,a}G^i_a$ is identically zero on $X_a$. Write $H(x)=(x-(k+1))\theta_1(x)$ where $\theta_1(x)$ is a unit in $\an_{\C^3,a}$. The analytic germ
$$
\beta_a:=(x-(k+1))^k+\sum_{i=0}^{k-1}B_{i,a}\cdot(x-(k+1))^i\theta_1(x)^{i-k}(y-(k+1))^{k-i}\theta_1(y)^{k-i}
$$
is identically zero on $X_a$. Write $F(x)=(x-(k+1))^{k+1}\theta_2(x)$ where $\theta_2(x)$ is a unit in $\an_{\C^3,a}$. The ideal of analytic germs vanishing at $X_a$ is generated by
$$
\rho_a:=(x-(k+1))^{k+1}-(y-(k+1))^{k+1}\Big(z\frac{\theta_2(y)}{\theta_2(x)}\Big),
$$
so $\rho_a$ divides $\beta_a$ in $\an_{\C^3,a}$. But this is impossible because the order $k$ of $\beta_a$ at $a$ is smaller than the order $k+1$ of $\rho_a$ at $a$. Consequently $G\not\in\ol{\an(X)}^\nu$, as required.
\end{proof}

\section{Real underlying structure of a complex analytic space}\label{s4}

In this section we develop the main tools we need to prove Theorem \ref{neighnorm0}. Its proof requires some preliminary work concerning the local properties of the real underlying structure of a complex analytic space that have interest by their own. To ease the presentation of some proofs we use both symbols $\ol{\,\cdot\,}$ and $\sigma$ to denote the complex conjugation in $\C^n$. Recall that an ideal $\gta$ of a commutative ring $A$ is called \em real \em if whenever a sum of squares $\sum_{i=1}^pa_i^2$ of elements of $A$ belongs to $\gta$, each $a_i\in\gta$. In particular, real ideals are radical ideals. The \em real radical of the ideal $\gta\subset A$ \em 
$$
\sqrt[r]{\gta}:=\Big\{a\in A:\,a^{2m}+\sum_{j=1}^pa_i^2\in\gta,\ a_i\in A,\ m,p\geq1\Big\}
$$
is the smallest real ideal that contains $\gta$. Of course, an ideal $\gta\subset A$ is real if and only if it coincides with its real radical. A ring $A$ is \em real \em if the zero ideal is a real ideal. In particular, real rings are reduced rings. The \em real reduction \em of a ring $A$ is the quotient $A^{rr}:=A/\sqrt[r]{(0)}$ and it is the greatest real quotient of $A$. If $(X,\an_X)$ is a real analytic space and $(X,\an_X^r)$ is the reduction of $(X,\an_X)$, then $\an_{X,x}^r=(\an_{X,x})^{rr}$ for each $x\in X$. Contrary to what happens in the complex case, the reduction of a real analytic space needs not to be coherent, even if $(X,\an_X)$ is a $C$-analytic space. Consider for instance Whitney's umbrella $X:=\{z^2-x^2y=0\}\subset\R^n$ endowed with its canonical $C$-analytic structure $\an_X:=\an_{\R^n}|_X$.

\subsection{Local algebraic properties of the real underlying structure}
We analyze first the algebraic properties, like the height and the primary decomposition, for certain type of distinguished ideals of $\an_{\R^{2n},x}$ that are constructed from ideals of $\an_{\C^n,x}$. Given an ideal $\gta$ of $\an_{\C^n,x}$, the set $\ol{\gta}:=\{\ol{F_x}:\ F_x\in\gta\}$ is an ideal of $\ol{\an}_{\C^n,x}$. Consider the ideal $\gta^\R:=((\gta\cup\ol{\gta})(\an_{\R^{2n},x}\otimes_\R\C))\cap\an_{\R^{2n},x}$ of $\an_{\R^{2n},x}$. We will prove that the operator $\cdot^\R$ transforms: 
\begin{itemize}
\item a prime ideal $\gtp$ of $\an_{\C^n,x}$ of height $r$ into a real prime ideal $\gtp^\R$ of $\an_{\R^{2n},x}$ of height $2r$ (see Theorem \ref{primality1}), 
\item a radical ideal $\gta$ of $\an_{\C^n,x}$ into an ideal $\gta^\R$ of $\an_{\R^{2n},x}$ such that the primary decomposition of its real radical $\sqrt[r]{\gta^\R}$ can be expressed in terms of the primary decomposition of $\gta$ via the operator $\cdot^\R$ (see Corollary \ref{rradical}).
\end{itemize}

\subsection{Tensor products}
The proofs of the previous results require some preliminary algebraic work that involve tensor products of $\C$-algebras. To make clearer the exposition we use the notation $\an_{\R^{2n},x}\otimes_\R\C$ to refer to $\an_{\C^{2n},x}$ (as complexification of $\an_{\R^{2n},x}$) in order to not make confusion with the initial ring $\an_{\C^n,x}$. The smallest $\C$-subalgebra of $\an_{\R^{2n},x}\otimes_\R\C$ that contains $\an_{\C^n,x}$ and $\ol{\an}_{\C^n,x}$ is
$$
\an_{\C^n,x}\ol{\an}_{\C^n,x}:=\Big\{\sum_{i=1}^pF_{i,x}\ol{G_{i,x}}:\ F_{i,x},G_{i,x}\in\an_{\C^n,x},\ p\geq1\Big\}
$$
endowed with the natural $\C$-algebra structure. Given two ideals $\gta,\gtb$ of $\an_{\C^n,x}$, consider the ideals $\gta\ast\ol{\gtb}:=\gta\ol{\an}_{\C^n,x}+\an_{\C^n,x}\ol{\gtb}$ of $\an_{\C^n,x}\ol{\an}_{\C^n,x}$ and $(\gta\ast\ol{\gtb})^e:=(\gta\ast\ol{\gtb})(\an_{\R^{2n},x}\otimes_\R\C)$ of $\an_{\R^{2n},x}\otimes_\R\C$. Fix two prime ideal $\gtp,\gtq$ of $\an_{\C^n,x}$. 

\begin{lem}\label{cuts}
We have $(\gtp\ast\ol{\gtq})^e\cap\an_{\C^n,x}=\gtp$, $(\gtp\ast\ol{\gtq})^e\cap\ol{\an}_{\C^n,x}=\ol{\gtq}$ and $(\gtp\ast\ol{\gtq})^e\cap(\an_{\C^n,x}\ol{\an}_{\C^n,x})=\gtp\ast\ol{\gtq}$.
\end{lem}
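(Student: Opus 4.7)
My plan is to work in the local analytic coordinates $u_k:=z_k-a_k$ and $v_k:=\ol{z_k}-\ol{a_k}$ on $\an_{\R^{2n},x}\otimes_\R\C$ (after translating $x$ to the origin). Since $(x-a',y-a'')\leftrightarrow(u,v)$ is an invertible $\C$-linear change of coordinates on $\R^{2n}\otimes_\R\C=\C^{2n}$, this produces
\begin{equation*}
\an_{\R^{2n},x}\otimes_\R\C\;\cong\;\C\{u,v\},\qquad\an_{\C^n,x}\equiv\C\{u\},\qquad\ol{\an}_{\C^n,x}\equiv\C\{v\}.
\end{equation*}
An elementary double power-series argument shows that the multiplication map $\C\{u\}\otimes_\C\C\{v\}\to\C\{u,v\}$ is injective with image $\an_{\C^n,x}\ol{\an}_{\C^n,x}$: a relation $\sum_{i=1}^N f_i(u)\otimes g_i(v)\mapsto 0$ with $\C$-linearly independent $g_i$'s would force each $f_i$ to vanish via a Vandermonde-type coefficient comparison in $v$. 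Under these identifications, $\gtp\ast\ol{\gtq}$ corresponds to $\gtp\otimes\C\{v\}+\C\{u\}\otimes\ol{\gtq}$ and its extension $(\gtp\ast\ol{\gtq})^e$ to $\gtp\C\{u,v\}+\ol{\gtq}\C\{u,v\}$.

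\textbf{First and second equations.} For the first, I would apply the $\C\{u\}$-algebra retraction $\pi\colon\C\{u,v\}\to\C\{u\}$, $v\mapsto 0$. For $F\in(\gtp\ast\ol{\gtq})^e\cap\C\{u\}$, writing $F=\sum_i H_iP_i+\sum_j K_j\ol{Q_j}$ with $P_i\in\gtp$, $Q_j\in\gtq$ and $H_i,K_j\in\C\{u,v\}$, I apply $\pi$. Since $\gtq$ is a proper prime ideal of the local ring $\an_{\C^n,x}$, every $Q_j$ lies in the maximal ideal, hence $Q_j(0)=0$ and $\pi(\ol{Q_j})=\ol{Q_j(0)}=0$; as $\pi$ fixes $F$ and each $P_i$, this yields $F=\sum_i\pi(H_i)P_i\in\gtp$. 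The reverse inclusion is trivial, and the second equation follows by the symmetric argument using the retraction $u\mapsto 0$.

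\textbf{Third equation and main obstacle.} The inclusion $\supseteq$ is immediate. Setting $R:=\C\{u\}\otimes_\C\C\{v\}\subseteq S:=\C\{u,v\}$ and $I:=\gtp\ast\ol{\gtq}$, the inclusion $\subseteq$ is equivalent to the injectivity of
\begin{equation*}
R/I\;=\;(\C\{u\}/\gtp)\otimes_\C(\C\{v\}/\ol{\gtq})\;\longrightarrow\;S/IS\;=\;\C\{u,v\}/(\gtp+\ol{\gtq})\C\{u,v\}.
\end{equation*}
My plan is to deduce this from the flatness of the extension $R\hookrightarrow S$, a standard feature comparing the convergent (analytic) tensor product of $\C\{u\}$ and $\C\{v\}$ with the algebraic one. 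The main obstacle is justifying this flatness cleanly; my preferred route is to use that $\C\{u,v\}$ is flat over $\C\{u\}$ (flatness of the analytic projection of manifold germs) in order to identify $S/\gtp S$ with the analytic tensor product $(\C\{u\}/\gtp)\hat\otimes_\C\C\{v\}$, into which $(\C\{u\}/\gtp)\otimes_\C\C\{v\}$ embeds by the same Vandermonde coefficient argument used in the setup; a second application of the same argument after modding out by $\ol{\gtq}$ produces the desired injectivity.
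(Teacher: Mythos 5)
Your setup and your treatment of the first two equalities are correct and, modulo notation, identical to the paper's argument: the identification $\an_{\R^{2n},x}\otimes_\R\C\cong\C\{u,v\}$ with $\an_{\C^n,x}\equiv\C\{u\}$ and $\ol{\an}_{\C^n,x}\equiv\C\{v\}$ is exactly what the paper exploits (a holomorphic germ $\Gamma(z,w)$ vanishing on $\{w=\ol{z}\}$ vanishes identically), and your retraction $v\mapsto 0$ is the paper's substitution $w=\ol{x}$ combined with the observation that $Q_j\in\gtq\subset\gtm_x$ forces $\ol{Q_j}(0)=0$.

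The third equality is where your sketch has a genuine gap, in two places. First, the flatness framing cannot carry the argument: flatness of $R:=\C\{u\}\otimes_\C\C\{v\}\hookrightarrow S:=\C\{u,v\}$ does not imply $IS\cap R=I$ (compare $\Z\hookrightarrow\Q$ with $I=2\Z$); one would need faithful flatness, and that fails because $R$ is not local. Indeed $u_1v_1-1$ is a non-unit of $R$ (its inverse $-\sum_{k\geq0}u_1^kv_1^k$ has Taylor-coefficient matrix of infinite rank, hence lies in $S\setminus R$) which does not belong to $\gtm_S\cap R$, so $R$ has a second maximal ideal $\gtM$ with $\gtM S=S$. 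Second, and more seriously, the ``second application of the same Vandermonde argument after modding out by $\ol{\gtq}$'' does not go through: once the relation reads $\sum_i f_i(u)g_i(v)\in\gtp S+\ol{\gtq}S$, the $v^\alpha$-coefficients of a summand $\sum_m l_m(u,v)\ol{q_m}(v)$ are arbitrary elements of $\C\{u\}$, so extracting Taylor coefficients in $v$ no longer traps the $f_i$ in $\gtp$. The repair — which is what the paper actually does — is to replace coefficient extraction by evaluation at points $v=\ol{p}$ with $p\in\ceros(\gtq)$, which annihilates the $\ol{\gtq}S$ term; but to produce enough such evaluations one must know that $\C$-linear independence of the $g_i$ modulo $\ol{\gtq}$ coincides with their $\C$-linear independence as functions on a representative of $\ceros(\ol{\gtq})$. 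This uses that $\gtq$ is prime (so $\gtq=\Jhaz(\ceros(\gtq))$ by the Nullstellensatz) together with the identity principle on the irreducible germ, and then the paper's Lemma \ref{points} to select points making the evaluation matrix invertible. These geometric inputs are indispensable and are not recoverable from the formal tensor-product manipulations you propose.
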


Before proving Lemma \ref{cuts} we need the following preparatory result from Linear Algebra.

\begin{lem}\label{points}
Let $\kappa$ be a field and let $F_1,\ldots,F_r:X\to\kappa$ be $\kappa$-linearly independent functions on a set $X$. If we denote $F:=(F_1,\ldots,F_r)$, there exist points $p_1,\ldots,p_r\in X$ such that the vectors $F(p_1),\ldots,F(p_r)$ are $\kappa$-linearly independent.
\end{lem}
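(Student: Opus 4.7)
My plan is to prove Lemma \ref{points} by induction on $r$, using the standard trick of expanding a determinant along its last row.

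The base case $r=1$ is immediate: linear independence of the single function $F_1$ means $F_1 \not\equiv 0$, so pick any $p_1 \in X$ with $F_1(p_1) \neq 0$.

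For the inductive step, assume the statement for $r-1$ and suppose $F_1,\ldots,F_r$ are $\kappa$-linearly independent. Then in particular $F_1,\ldots,F_{r-1}$ are linearly independent, so by the inductive hypothesis there exist $p_1,\ldots,p_{r-1}\in X$ such that the vectors $(F_1,\ldots,F_{r-1})(p_j)\in\kappa^{r-1}$ are linearly independent, i.e., the $(r-1)\times(r-1)$ matrix $M:=(F_i(p_j))_{1\leq i,j\leq r-1}$ is invertible. I will then look for $p_r\in X$ for which the $r\times r$ matrix with rows $F(p_1),\ldots,F(p_r)$ is invertible.

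The key observation is that expanding this determinant along the last row gives a $\kappa$-linear combination
$$
\Delta(p_r) \;=\; \sum_{i=1}^{r}(-1)^{r+i}\,c_i\,F_i(p_r),
$$
where $c_i$ is the determinant of the minor obtained by deleting the last row and the $i$-th column of the $r\times r$ matrix; crucially, $c_r=\det M\neq 0$. Thus $\Delta$ is a nontrivial $\kappa$-linear combination of $F_1,\ldots,F_r$, which by hypothesis are linearly independent functions on $X$. Hence $\Delta$ is not identically zero, and any $p_r\in X$ with $\Delta(p_r)\neq 0$ completes the induction.

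No genuine obstacle arises here: this is purely a matter of unwinding the determinant expansion and invoking linear independence of the $F_i$ as functions (which means precisely that no nontrivial $\kappa$-linear combination vanishes everywhere on $X$). The only minor point to be careful about is that we use linear independence of the $F_i$ \emph{as functions on $X$} (not merely as vectors in some finite-dimensional quotient), which is exactly the hypothesis.
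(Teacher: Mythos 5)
Your proof is correct and follows essentially the same inductive scheme as the paper's: choose $p_1,\ldots,p_{r-1}$ by the inductive hypothesis and then show some $p_r$ makes the $r\times r$ matrix invertible. The only (cosmetic) difference is that you argue directly by exhibiting the cofactor expansion $\Delta=\sum_i(-1)^{r+i}c_iF_i$ as a nontrivial linear combination of the $F_i$, whereas the paper reaches the same conclusion by contradiction via Cramer's rule; your version is, if anything, slightly more streamlined.
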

\begin{proof}
For $r=1$ the result is trivially true. Suppose the result true for $r-1$ and let us see that it is also true for $r$. Denote $G:=(F_1,\ldots,F_{r-1})$ and choose by induction points $p_1,\ldots,p_{r-1}\in X$ such that the vectors $G(p_1),\ldots,G(p_{r-1})$ are $\kappa$-linearly independent. Suppose by contradiction that for each $z\in X$ the vectors $F(p_1),\ldots,F(p_{r-1}),F(z)$ are $\kappa$-linearly dependent. Then, for each $z\in X$ there exist scalars $\lambda_1(z),\ldots,\lambda_{r-1}(z)\in\kappa$ such that
$$
F(z)=\lambda_1(z)F(p_1)+\cdots+\lambda_{r-1}(z)F(p_{r-1})\ \leadsto\ G(z)=\lambda_1(z)G(p_1)+\cdots+\lambda_{r-1}(z)G(p_{r-1}).
$$
As the (constant) vectors $G(p_1),\ldots,G(p_{r-1})\in\kappa^{r-1}$ are $\kappa$-linearly independent, each $\lambda_i(z)$ is a $\kappa$-linear combination of the functions $F_1,\ldots,F_{r-1}$. To prove this easily use for instance Cramer's solution for the consistent $\kappa$-linear system
$$
\begin{pmatrix}
F_1(z)\\
\vdots\\
F_{r-1}(z)
\end{pmatrix}=
\begin{pmatrix}
F_1(p_1)&\cdots&F_1(p_{r-1})\\
\vdots&\ddots&\vdots\\
F_{r-1}(p_1)&\cdots&F_{p-1}(p_{r-1})
\end{pmatrix}
\begin{pmatrix}
\lambda_1(z)\\
\vdots\\
\lambda_{r-1}(z)
\end{pmatrix}.
$$
Consequently, $\lambda_j:=\sum_{k=1}^{r-1}\mu_{jk}F_k$ for some $\mu_{jk}\in\kappa$. We conclude
$$
F_r=\sum_{j=1}^{r-1}\Big(\sum_{k=1}^{r-1}\mu_{jk}F_k\Big)F_r(p_j),
$$
which is a contradiction because $F_1,\ldots,F_r$ are $\kappa$-linearly independent. Thus, there exists $p_r\in X$ such that the vectors $F(p_1),\ldots,F(p_r)$ are $\kappa$-linearly independent, as required.
\end{proof}

\begin{proof}[Proof of Lemma \em \ref{cuts}]
The proof is conducted in two steps:

\paragraph{}\label{case1} We prove first: \em $(\gtp\ast\ol{\gtq})^e\cap\an_{\C^n,x}=\gtp$\em. The equality $(\gtp\ast\ol{\gtq})^e\cap\ol{\an}_{\C^n,x}=\ol{\gtq}$ is proved analogously.

Let $F_x\in(\gtp\ast\ol{\gtq})^e\cap\an_{\C^n,x}$ and write $F_x=\sum_{i=1}^r(F_{i,x}G_{i,x}+\ol{A_{i,x}}B_{i,x})$ where $F_{i,x}\in\gtp$, $A_{i,x}\in\gtq$ and $G_{i,x},B_{i,x}\in\an_{\R^{2n},x}\otimes_\R\C$. Define 
$$
G_{i,(x,\ol{x})}'(\z,{\tt w}):=G_{i,x}(\tfrac{\z+{\tt w}}{2},\tfrac{\z-{\tt w}}{2\sqrt{-1}}),B_{i,(x,\ol{x})}'(\z,{\tt w}):=B_{i,x}(\tfrac{\z+{\tt w}}{2},\tfrac{\z-{\tt w}}{2\sqrt{-1}})\in\an_{\C^{2n},(x,\ol{x})}.
$$ 
It holds $G_{i,(x,\ol{x})}'(\z,\ol{\z})=G_{i,x}$ and $B_{i,(x,\ol{x})}'(\z,\ol{\z})=B_{i,x}$.

Let $\Omega\times\sigma(\Omega)\subset\C^{2n}$ be an open connected neighborhood of $(x,\ol{x})$ on which the germs above admit holomorphic representatives $F_i,A_i,G_i',B_i'$. Define $C_i:=\ol{A_i\circ\sigma}\in H^0(\sigma(\Omega),\an_{\C^n})$ and observe $\ol{A_i}=C_i\circ\sigma$. The holomorphic function
$$
\Gamma(z,w)=F(z)-\sum_{i=1}^r(F_i(z)G'_i(z,w)+C_i(w)B_i'(z,w))\in H^0(\Omega\times\sigma(\Omega),\an_{\C^{2n}})
$$
satisfies
\begin{equation*}
\Gamma(z,\ol{z})=F(z)-\sum_{i=1}^r(F_i(z)G_i'(z,\ol{z})+C_i(\ol{z})B_i'(z,\ol{z}))=F(z)-\sum_{i=1}^r(F_i(z)G_i+\ol{A_i(z)})B_i=0.
\end{equation*}
Denote $\gtm_x$ the maximal ideal of $\an_{\C^n,x}$ associated with $x$. By \cite[1.1.5.Prop.1]{d} $\Gamma$ is identically zero on $\Omega\times\sigma(\Omega)$. As $A_i\in\gtq\subset\gtm_x$, we have $C_i(\ol{x})=\ol{A_i(x)}=0$, so
$$
0=\Gamma(z,\ol{x})=F(z)-\sum_{i=1}^r(F_i(z)G_i'(z,\ol{x})+B_i'(z,\ol{x})C_i(\ol{x}))=F(z)-\sum_{i=1}^rF_i(z)G'_i(z,\ol{x}).
$$
Consequently, $F_x=\sum_{i=1}^rF_{i,x}G'_{i,x}(z,\ol{x})\in\gtp$ and $(\gtp\ast\ol{\gtq})^e\cap\an_{\C^n,x}=\gtp$. 

\paragraph{} Next we prove: $(\gtp\ast\ol{\gtq})^e\cap(\an_{\C^n,x}\ol{\an}_{\C^n,x})=\gtp\ast\ol{\gtq}$.

Let $H_x\in(\gtp\ast\ol{\gtq})^e\cap(\an_{\C^n,x}\ol{\an}_{\C^n,x})$ and write as we have done in \ref{case1}
\begin{equation}\label{a0}
H_x=\sum_{i=1}^r(A_{i,x}(\z)B_{i,(x,\ol{x})}(\z,\ol{\z})+C_{i,\ol{x}}(\ol{\z})D_{i,(x,\ol{x})}(\z,\ol{\z}))=\sum_{j=1}^sF_{j,x}(\z)G_{j,\ol{x}}(\ol{\z}),
\end{equation}
where $A_{i,x}\in\gtp$, $\ol{C_{i,\ol{x}}\circ{\sigma}}\in\gtq$, $B_{i,(x,\ol{x})},D_{i,(x,\ol{x})}\in\an_{\C^{2n},(x,\ol{x})}$, $F_{j,x},\ol{G_{j,\ol{x}}\circ\sigma}\in\an_{\C^n,x}$. 
Let $\Omega\times\sigma(\Omega)\subset\C^{2n}$ be an open connected neighborhood of $(x,\ol{x})$ on which the germs above admit holomorphic representatives $A_i,B_i,C_i,D_i,F_j,G_j$. The holomorphic function
\begin{equation}\label{a}
\Gamma(\z,\w):=\sum_{i=1}^rA_i(\z)B_i(\z,{\tt w})+C_i({\tt w})D_i(\z,{\tt w}))-\sum_{j=1}^sF_j(\z)G_j({\tt w}),
\end{equation}
satisfies $\Gamma(\z,\ol{\z})=0$, so by \cite[1.1.5.Prop.1]{d} $\Gamma$ is identically zero.

Consider the $\C$-linear subspace ${\tt H}/\gtq$ of the $\C$-linear space $\an_{\C^n,x}/\gtq$ spanned by $\{\ol{G_{j,\ol{x}}\circ\sigma}:\ j=1,\ldots,s\}$. We may assume that $\{\ol{G_{j,\ol{x}}\circ\sigma}:\ j=1,\ldots,\ell\}$ constitute a basis of ${\tt H}$. Thus, $\ol{G_{k,\ol{x}}\circ\sigma}$ belongs to the $\C$-linear space ${\tt H}+\gtq$ for $k=\ell+1,\ldots,s$. Write 
$$
G_{k,\ol{x}}=\sum_{j=1}^\ell\mu_{jk}G_{j,\ol{x}}+G'_{k,\ol{x}}
$$
where $\mu_{jk}\in\C$ and $\ol{G'_{k,\ol{x}}\circ\sigma}\in\gtq$ for $k=\ell+1,\ldots,s$. We may assume $G'_{k,\ol{x}}$ admits a holomorphic representative $G'_k$ on $\Omega$. Thus,
\begin{multline*}
\sum_{j=1}^sF_j(\z)G_j({\tt w})=\sum_{j=1}^\ell F_j(\z)G_j({\tt w})+\sum_{k=\ell+1}^sF_k(\z)\Big(\sum_{j=1}^\ell\mu_{jk}G_j({\tt w})+G'_k({\tt w})\Big)\\
=\sum_{j=1}^\ell\Big(F_j(\z)+\sum_{k=\ell+1}^s\mu_{jk}F_k(\z)\Big)G_j({\tt w})+\sum_{k=\ell+1}^sF_k(\z)G'_k({\tt w}).
\end{multline*}
If we substitute $F_j$ by $F_j(\z)+\sum_{k=\ell+1}^s\mu_{jk}F_k(\z)$ for $j=1,\ldots,\ell$ and $G_k$ by $G_k'$ for $k=\ell+1,\ldots,s$, we may assume $\ol{G_{k,\ol{x}}\circ\sigma}\in\gtq$ for $k=\ell+1,\ldots,s$. We claim: \em $F_{i,x}\in\gtp$ for $i=1,\ldots,\ell$\em.

After shrinking $\Omega$, we assume that $\ceros(\gtq)$ admits a representative $Y$ that is an irreducible complex analytic subset of $\Omega$. Let $U$ be an open neighborhood in $Y$ of a regular point that is analytically diffeomorphic to an open subset of $\C^d$. As $Y$ is irreducible, the restrictions to $U$ of $\ol{G_1\circ\sigma},\ldots,\ol{G_\ell\circ\sigma}$ are, by the Identity Principle, $\C$-linear independent. Denote $\ol{G\circ\sigma}:=(\ol{G_1\circ\sigma},\ldots,\ol{G_\ell\circ\sigma})$. By Lemma \ref{points} there exist points $p_1,\ldots,p_\ell\in U$ such that the vectors $\ol{G(\ol{p_1})},\ldots,\ol{G(\ol{p_\ell})}$ are $\C$-linearly independent. As $p_k\in U\subset Y$, $\ol{C_{i,\ol{x}}\circ{\sigma}}\in\gtq$ and $\ol{G_{j,\ol{x}}\circ\sigma}\in\gtq$ for $j=\ell+1,\ldots,s$, we deduce after substituting $\ol{p_i}$ in \eqref{a}
$$
0=\Gamma(\z,\ol{p_i})=\sum_{i=1}^rA_i(\z)B_i(\z,\ol{p_i})-\sum_{j=1}^\ell F_j(\z)G_j(\ol{p_i}).
$$
Consequently, 
$$
H_{i,x}:=\sum_{j=1}^\ell F_{j,x}(\z)G_j(\ol{p_i})=\sum_{i=1}^rA_{i,x}(\z)B_{i,x}(\z,\ol{p_i})\in\gtp
$$
for $i=1,\ldots,\ell$. Consider the consistent $\C$-linear system
$$
\begin{pmatrix}
H_{1,x}\\
\vdots\\
H_{\ell,x}
\end{pmatrix}=
\begin{pmatrix}
G_1(\ol{p_1})&\cdots&G_1(\ol{p_\ell})\\
\vdots&\ddots&\vdots\\
G_\ell(\ol{p_1})&\cdots&G_\ell(\ol{p_\ell})
\end{pmatrix}
\begin{pmatrix}
F_{1,x}\\
\vdots\\
F_{\ell,x}
\end{pmatrix}.
$$
As the vectors $G(\ol{p_1}),\ldots,G(\ol{p_\ell})$ are $\C$-linearly independent, $F_{j,x}\in\gtp$ for $j=1,\ldots,\ell$. We conclude 
$$
\sum_{j=1}^sF_{j,x}(\z)G_{j,x}(\ol{\z})=\sum_{j=1}^\ell F_{j,x}(\z)G_{j,x}(\ol{\z})+\sum_{k=\ell+1}^sF_{k,x}(\z)G_{k,x}(\ol{\z})\in\gtp\ol{\an}_{\C^n,x}+{\an}_{\C^n,x}\ol{\gtq}=\gtp\ast\ol{\gtq},
$$
as required.
\end{proof}
\begin{remark}
Consequently, $\an_{\C^n,x}/\gtp$, $\ol{\an}_{\C^n,x}/\ol{\gtq}$ and $(\an_{\C^n,x}\ol{\an}_{\C^n,x})/(\gtp\ast\ol{\gtq})$ can be regarded as $\C$-subalgebras of $(\an_{\R^{2n},x}\otimes_\R\C)/(\gtp\ast\ol{\gtq})^e$. The smallest $\C$-subalgebra of $(\an_{\R^{2n},x}\otimes_\R\C)/(\gtp\ast\ol{\gtq})^e$ that contains $\an_{\C^n,x}/\gtp$ and $\ol{\an}_{\C^n,x}/\ol{\gtq}$ is
$$
(\an_{\C^n,x}/\gtp)(\ol{\an}_{\C^n,x}/\ol{\gtq}):=\Big\{\sum_{i=1}^r[F_{i,x}][\ol{G}_{i,x}]:\ F_{i,x},G_{i,x}\in\an_{\C^n,x}\Big\}\cong(\an_{\C^n,x}\ol{\an}_{\C^n,x})/(\gtp\ast\ol{\gtq}).
$$
\end{remark}

The following result allow us to represent the $\C$-algebra $(\an_{\C^n,x}\ol{\an}_{\C^n,x})/(\gtp\ast\ol{\gtq})$ as the tensor product $(\an_{\C^n,x}/\gtp)\otimes_\C(\ol{\an}_{\C^n,x}/\ol{\gtq})$. The latter description will ease to decide if the $\C$-algebra $(\an_{\C^n,x}\ol{\an}_{\C^n,x})/(\gtp\ast\ol{\gtq})$ is an integral domain, a normal domain, etc.

\begin{lem}\label{tensor}
The map 
$$
\varphi:(\an_{\C^n,x}/\gtp)\otimes_\C(\ol{\an}_{\C^n,x}/\ol{\gtq})\to(\an_{\C^n,x}\ol{\an}_{\C^n,x})/(\gtp\ast\ol{\gtq}),\ \sum_{i=1}^r[F_{i,x}]\otimes[\ol{G}_{i,x}]\mapsto\sum_{i=1}^r[F_{i,x}][\ol{G}_{i,x}]
$$ 
is an isomorphism.
\end{lem}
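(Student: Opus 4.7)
The plan is as follows. The assignment $([F_x],[\ol{G}_x])\mapsto[F_x\ol{G}_x]$ descends from the bilinear pairing to the quotients because $\gtp\ol{\an}_{\C^n,x}\subset\gtp\ast\ol{\gtq}\supset\an_{\C^n,x}\ol{\gtq}$, and the universal property of $\otimes_\C$ then produces the $\C$-algebra homomorphism $\varphi$. I would immediately observe that $\varphi$ is surjective, since by construction the target ring is the $\C$-span of the classes $[F_x\ol{G}_x]$. The entire content of the lemma is therefore the injectivity of $\varphi$: given $\tau=\sum_{j=1}^s[F_{j,x}]\otimes[\ol{G}_{j,x}]$ with $\sum_{j=1}^s F_{j,x}\ol{G}_{j,x}\in\gtp\ast\ol{\gtq}$, I need to show $\tau=0$ in the tensor product.

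My strategy is first a purely linear-algebraic reduction that puts $\tau$ in a normal form with respect to $\ol{\gtq}$. Let $\ell$ be the $\C$-dimension of the subspace of $\ol{\an}_{\C^n,x}/\ol{\gtq}$ spanned by the classes $\{[\ol{G}_{j,x}]\}$; after reindexing I may take $\{[\ol{G}_{j,x}]\}_{j=1}^\ell$ as a basis of this subspace and write $\ol{G}_{k,x}=\sum_{j=1}^\ell\mu_{jk}\ol{G}_{j,x}+\ol{G}'_{k,x}$ with $\ol{G}'_{k,x}\in\ol{\gtq}$ for $k>\ell$. Using $[\ol{G}'_{k,x}]=0$ and bilinearity, the tensor rearranges as
$$
\tau=\sum_{j=1}^\ell\bigl[F'_{j,x}\bigr]\otimes[\ol{G}_{j,x}],\qquad F'_{j,x}:=F_{j,x}+\sum_{k>\ell}\mu_{jk}F_{k,x},
$$
and a short parallel computation in $\an_{\C^n,x}\ol{\an}_{\C^n,x}$ shows that $\sum_{j\leq\ell}F'_{j,x}\ol{G}_{j,x}$ differs from $\sum_j F_{j,x}\ol{G}_{j,x}$ by $-\sum_{k>\ell}F_{k,x}\ol{G}'_{k,x}\in\an_{\C^n,x}\ol{\gtq}\subset\gtp\ast\ol{\gtq}$. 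Hence $\sum_{j\leq\ell}F'_{j,x}\ol{G}_{j,x}\in\gtp\ast\ol{\gtq}\subset(\gtp\ast\ol{\gtq})^e$, now with the extra feature that the classes $\{[\ol{G}_{j,x}]\}_{j=1}^\ell$ are $\C$-linearly independent modulo $\ol{\gtq}$.

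At this point I would invoke the second paragraph of the proof of Lemma \ref{cuts} essentially verbatim: lift to a holomorphic identity on a polydisc $\Omega\times\sigma(\Omega)$, pick a small open piece $U$ at a regular point of an irreducible representative $Y$ of $\ceros(\gtq)$, use the Identity Principle and Lemma \ref{points} to produce points $p_1,\ldots,p_\ell\in U$ for which the matrix $(G_j(\ol{p_i}))_{1\leq i,j\leq\ell}$ is invertible, substitute $\ol{p_i}$ for the anti-holomorphic variable, and solve the resulting consistent $\C$-linear system by Cramer's rule to conclude that $F'_{j,x}\in\gtp$ for every $j=1,\ldots,\ell$. Then $[F'_{j,x}]=0$ in $\an_{\C^n,x}/\gtp$, so $\tau=0$, and $\varphi$ is injective. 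The genuinely non-trivial ingredient, and thus the main obstacle, is exactly this final geometric step, which is not re-proved but imported from Lemma \ref{cuts}; everything else is formal bookkeeping in the tensor algebra. The one pitfall to monitor is that $\varphi(\tau)=0$ only asserts $\sum_j F_{j,x}\ol{G}_{j,x}\in\gtp\ast\ol{\gtq}$ rather than in the a priori larger ideal $(\gtp\ast\ol{\gtq})^e$ used in Lemma \ref{cuts}, but the trivial containment $\gtp\ast\ol{\gtq}\subset(\gtp\ast\ol{\gtq})^e$ makes this harmless.
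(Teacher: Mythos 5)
Your proof is correct; the only substantive step, as you say, is the evaluation argument at points of $\ceros(\gtq)$, and you import it accurately. The paper packages the lemma differently: it invokes the criterion of Bourbaki [B1, Ch.III, \S4.4], by which $\varphi$ is an isomorphism onto the subalgebra $(\an_{\C^n,x}\ol{\an}_{\C^n,x})/(\gtp\ast\ol{\gtq})$ of $(\an_{\R^{2n},x}\otimes_\R\C)/(\gtp\ast\ol{\gtq})^e$ if and only if every $\C$-linearly independent family $\{[F_{i,x}]\}$ in $\an_{\C^n,x}/\gtp$ remains linearly independent over $\ol{\an}_{\C^n,x}/\ol{\gtq}$ (linear disjointness); it then verifies this by taking a relation $\sum_iF_{i,x}\ol{G_{i,x}}\in(\gtp\ast\ol{\gtq})^e$, descending to $\gtp\ast\ol{\gtq}$ via Lemma \ref{cuts}, and running the $\Gamma(\z,\w)$/diagonal-vanishing/evaluation argument to conclude $G_{i,x}\in\gtq$ (via the Nullstellensatz, since $\gtq$ is prime). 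You instead check injectivity of $\varphi$ directly, normalizing the tensor with respect to $\ol{\gtq}$ and deducing $F'_{j,x}\in\gtp$ --- that is, you run the same geometric core with the roles of the two factors interchanged, which is literally the computation already carried out in the second paragraph of the proof of Lemma \ref{cuts} (including the use of Lemma \ref{points} and the invertible matrix $(G_j(\ol{p_i}))$). Your route has two minor advantages: it does not rely on the Bourbaki linear-disjointness criterion as a black box, and, as you correctly observe, it bypasses the conclusion of Lemma \ref{cuts}, since $\varphi(\tau)=0$ gives membership in $\gtp\ast\ol{\gtq}$ directly rather than only in $(\gtp\ast\ol{\gtq})^e$. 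The small price is that well-definedness and surjectivity of $\varphi$ must be checked by hand, which you do.
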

\begin{proof}
By \cite[Ch.III.\S4.4]{b1} \em $\an_{\C^n,x}/\gtp\otimes_\C\ol{\an}_{\C^n,x}/\ol{\gtq}$ is (isomorphic to) the smallest $\C$-subalgebra of $(\an_{\R^{2n},x}\otimes_\R\C)/(\gtp\ast\ol{\gtq})^e$ that contains $\an_{\C^n,x}/\gtp$ and $\an_{\C^n,x}/\gtq$ if and only if every finite family $\{F_{1,x},\ldots,F_{r,x}\}\subset\an_{\C^n,x}/\gtp$ of $\C$-linearly independent holomorphic germs is also $\ol{\an}_{\C^n,x}/\ol{\gtq}$-linearly independent\em. 

Let $\{F_{1,x}+\gtp,\ldots,F_{r,x}+\gtp\}\subset\an_{\C^n,x}/\gtp\subset(\an_{\R^{2n},x}\otimes_\R\C)/(\gtp\ast\ol{\gtq})^e$ be $\C$-linearly independent. We want to show: \em The set $\{F_{1,x},\ldots,F_{r,x}\}$ is $\ol{\an}_{\C^n,x}/\ol{\gtq}$-linearly independent\em.

The $\C$-linear vector subspace ${\tt H}\subset\an_{\C^n,x}$ generated by $\{F_{1,x},\ldots,F_{r,x}\}$ meets $\gtp$ just in the origin. Let $G_{1,x},\ldots,G_{r,x}\in\ol{\an}_{\C^n,x}$ be such that $\sum_{i=1}^rF_{i,x}\ol{G_{i,x}}\in(\gtp\ast\ol{\gtq})^e$. By Lemma \ref{cuts}
$$
\sum_{i=1}^rF_{i,x}\ol{G_{i,x}}\in(\gtp\ast\ol{\gtq})^e\cap(\an_{\C^n,x}\ol{\an}_{\C^n,x})=\gtp\ast\ol{\gtq}.
$$
Let $A_{j,x}\in\gtp$, $B_{j,x}\in\gtq$ and $C_{j,x},D_{j,x}\in\an_{\C^n,x}$ be such that
$$
\sum_{i=1}^rF_{i,x}\ol{G_{i,x}}=\sum_{j=1}^s(A_{j,x}\ol{C_{j,x}}+\ol{B_{j,x}}D_{j,x}).
$$
Let $G_{i,\ol{x}}',B_{j,\ol{x}}',C_{j,\ol{x}}'\in\an_{\C^n,\ol{x}}$ be holomorphic germs such that $\ol{G_{i,x}}=G_{i,\ol{x}}'(\ol{\z})$, $\ol{B_{j,x}}=B_{j,\ol{x}}'(\ol{\z})$ and $\ol{C_{j,x}}=C_{j,\ol{x}}'(\ol{\z})$. Let $\Omega\subset\C^n$ be an open neighborhood of $x$ such that $F_{i,x},A_{j,x},D_{j,x}$ have representatives in $H^0(\Omega,\an_{\C^n})$ and $G_{i,\ol{x}},C'_{j,\ol{x}},B_{j,\ol{x}}'$ have representatives in $H^0(\sigma(\Omega),\an_{\C^n})$. Shrinking $\Omega$ we may assume in addition that there exists an irreducible complex analytic subset $Y\subset\Omega$ that is a representative of $\ceros(\gtq)$. Consider the holomorphic function
$$
\Gamma(\z,\w):=\sum_{i=1}^rF_i(\z)G_i'({\tt w})-\sum_{j=1}^s(A_j(\z)C'_j({\tt w})+B_j'({\tt w})D_j(\z)).
$$
that satisfies $\Gamma(z,\ol{z})=0$ for each $z\in\Omega$. By \cite[1.1.5.Prop.1]{d} $\Gamma$ is identically zero. We claim: \em $G_{i,x}=\ol{G_{i,\ol{x}}'\circ\sigma}\in\gtq$ for each $i=1,\ldots,r$\em. As $\gtq$ is a prime ideal, Hilbert's Nullstellensatz guarantees $\Jhaz(\ceros(\gtq))=\gtq$. Thus, it is enough to prove: \em $G_i(p)=\ol{G_i'\circ\sigma}(p)=0$ for each $p\in Y$ and each $i=1,\ldots,r$\em.

Pick a point $p\in Y$. As $\ol{B_{j,\ol{x}}'\circ\sigma}=B_{j,x}\in\gtq$,
$$
0=\Gamma(\z,\ol{p})=\sum_{i=1}^rF_i(\z)G_i'(\ol{p})-\sum_{j=1}^sA_j(\z)C'_j(\ol{p}).
$$
We deduce
$$
\sum_{i=1}^rF_i(\z)G_i'(\ol{p})=\sum_{j=1}^sA_j(\z)C'_j(\ol{p})\in{\tt H}\cap\gtp=\{0\}.
$$
As the family $\{F_{1,x},\ldots,F_{r,x}\}$ is $\C$-linearly independent, $G_i'(\ol{p})=0$ for $i=1,\ldots,r$, as required.
\end{proof}

As consequences of Lemma \ref{tensor} we have the following results.

\begin{cor}\label{normal}
Let $\gtp,\gtq$ be prime ideal of $\an_{\C^n,x}$. Then
\begin{itemize}
\item[(i)] $(\an_{\C^n,x}\ol{\an}_{\C^n,x})/(\gtp\ast\ol{\gtq})$ is an integral domain.
\item[(ii)] If in addition the quotients $\an_{\C^n,x}/\gtp$ and $\an_{\C^n,x}/\gtq$ are normal rings, $(\an_{\C^n,x}\ol{\an}_{\C^n,x})/(\gtp\ast\ol{\gtq})$ is a normal integral domain.
\end{itemize}
\end{cor}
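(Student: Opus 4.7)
The plan is to translate the problem, via Lemma \ref{tensor}, into a statement about the tensor product $R:=A\otimes_\C B$, where $A:=\an_{\C^n,x}/\gtp$ and $B:=\ol{\an}_{\C^n,x}/\ol{\gtq}$. Both factors are integral domains since $\gtp$ and $\ol{\gtq}$ are prime, and under the hypothesis of (ii) they are in addition normal.

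For part (i), I would invoke the classical fact that the tensor product of two integral domains over an algebraically closed field is itself an integral domain. Concretely, letting $K:=Q(A)$ and $L:=Q(B)$ denote the respective fraction fields, the tensor product $A\otimes_\C B$ embeds into $K\otimes_\C L$; since $\C$ is algebraically closed, the extension $K/\C$ is regular in the sense of field theory, so $K\otimes_\C L$ is an integral domain, and hence so is $A\otimes_\C B$.

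For part (ii), I would exploit the geometric content of the construction. Identifying $\an_{\R^{2n},x}\otimes_\R\C$ with the analytic local ring $\an_{\C^{2n},(x,\ol{x})}$ in standard coordinates $(z,w)$, the extended ideal $(\gtp\ast\ol{\gtq})^e$ separates into $\gtp$ expressed in the $z$-variables and $\ol{\gtq}$ expressed in the $w$-variables. Consequently the ring $S:=\an_{\C^{2n},(x,\ol{x})}/(\gtp\ast\ol{\gtq})^e$ is the local analytic ring at $(x,\ol{x})$ of a product germ $V_1\times V_2\subset\C^{2n}$, where $V_1$ is the germ at $x$ defined by $\gtp$ in the first copy of $\C^n$ and $V_2$ is the germ at $\ol{x}$ defined by $\ol{\gtq}$ in the second copy. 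Under the normality hypothesis on $A$ and $B$, both factor germs are normal, and since a product of normal complex analytic germs is normal (by Serre's criterion $R_1+S_2$, the singular locus of a product having codimension at least $2$), the ring $S$ is a normal local domain. Lemma \ref{cuts} then provides the inclusion $R\hookrightarrow S$ as a subring.

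It remains to descend normality from $S$ to $R$. Since $S$ can be regarded as the analytic tensor product $A\hat{\otimes}_\C B$, which is faithfully flat over the algebraic tensor product $R=A\otimes_\C B$, any element of $Q(R)$ that is integral over $R$ is also integral over the normal ring $S$ and hence lies in $S$; faithful flatness then forces it back into $R$. The main obstacle is precisely to make this faithful flatness argument rigorous in the present setting, for which one typically uses the description of the analytic tensor product as a suitable completion of the algebraic tensor product with respect to the maximal ideal $\gtm_A\otimes B+A\otimes\gtm_B$ and then appeals to classical descent theorems for normality.
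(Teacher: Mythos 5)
Part (i) of your proposal is correct and is essentially the paper's own argument: via Lemma \ref{tensor} the ring in question is $A\otimes_\C B$ with $A:=\an_{\C^n,x}/\gtp$ and $B:=\ol{\an}_{\C^n,x}/\ol{\gtq}$, and a tensor product of integral domains over the algebraically closed field $\C$ is again a domain (your embedding into $K\otimes_\C Q(B)$ and the regularity of $K/\C$ is the standard justification; the paper cites the same fact from Bourbaki).

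Part (ii), however, has a genuine gap exactly at the step you yourself single out as ``the main obstacle'', and it is not a matter of making a routine argument rigorous: the descent fails. Write $R:=A\otimes_\C B$ and $S:=\an_{\C^{2n},(x,\ol{x})}/(\gtp\ast\ol{\gtq})^e$. The ring $S$ is local and its maximal ideal contracts to the distinguished maximal ideal $\gtm:=\gtm_A\otimes B+A\otimes\gtm_B$ of $R$, so every prime of $S$ contracts to a prime contained in $\gtm$. But $R$ is \emph{not} local: already in $\C\{\z\}\otimes_\C\C\{\w\}$ the element $1-\z\otimes\w$ is a non-unit (its inverse $\sum_n\z^n\w^n$ is not a finite sum of pure tensors), while it becomes a unit in $\C\{\z,\w\}$; hence there are maximal ideals of $R$ that generate the unit ideal in $S$, the map $\Spec(S)\to\Spec(R)$ is not surjective, and $R\to S$ is not faithfully flat. (Nor is $S$ the $\gtm$-adic completion of $R$ --- that would be the formal power series ring.) The most your argument could deliver is normality of the localization $R_\gtm$, whereas the corollary, and its later use in Lemma \ref{primality0}(ii), require the whole ring $R$ to be normal. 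The paper sidesteps this by a purely algebraic route: it writes $R=(A\otimes_\C\ol{E})\cap(K\otimes_\C B)$ inside $K\otimes_\C\ol{E}$ (Bourbaki), observes that each of these two rings is normal because base change of a normal ring along a field extension of the algebraically closed field $\C$ preserves normality (EGA IV, 6.14.2), and concludes since an intersection of normal domains with a common fraction field is normal. To salvage your geometric approach you would need a separate treatment of the primes of $R$ not lying under the maximal ideal of $S$; the intersection formula is the clean substitute.
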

\begin{proof}
(i) By \cite[V.\S17, Cor. to Prop.1]{b2} and Lemma \ref{tensor} $(\an_{\C^n,x}\ol{\an}_{\C^n,x})/(\gtp\ast\ol{\gtq})$ is an integral domain. 

(ii) Let $K$ be the field of fractions of $(\an_{\C^n,x}/\gtp)$ and let $\ol{E}$ be the field of fractions of $(\ol{\an}_{\C^n,x}/\ol{\gtq})$. Denote the field of fractions of $(\an_{\C^n,x}\ol{\an}_{\C^n,x})/(\gtp\ast\ol{\gtq})$ with $L$. By \cite[11.6.2]{co} and Lemma \ref{tensor} $K\otimes_\C\ol{E}$ is (isomorphic to) the smallest $\C$-subalgebra $K\ol{E}$ of $L$ that contains $K$ and $\ol{E}$. In addition, $K\otimes_\C(\ol{\an}_{\C^n,x}/\ol{\gtq})$ is by \cite[11.6.2]{co} isomorphic to the smallest $\C$-subalgebra $K(\ol{\an}_{\C^n,x}/\ol{\gtq})$ of $L$ that contains $K$ and $(\ol{\an}_{\C^n,x}/\ol{\gtq})$ whereas $(\an_{\C^n,x}/\gtp)\otimes_\C\ol{E}$ is isomorphic to the smallest $\C$-subalgebra $(\an_{\C^n,x}/\gtp)\ol{E}$ of $L$ that contains $(\an_{\C^n,x}/\gtp)$ and $\ol{E}$. As the homomorphisms $\C\hookrightarrow(\an_{\C^n,x}/\gtp)$, $\C\hookrightarrow K$, $\C\hookrightarrow(\ol{\an}_{\C^n,x}/\ol{\gtq})$ and $\C\hookrightarrow\ol{E}$ are flat,
{\begin{align*}
(\an_{\C^n,x}\ol{\an}_{\C^n,x})/(\gtp\ast\ol{\gtq})\cong(\an_{\C^n,x}/\gtp)\otimes_\C(\ol{\an}_{\C^n,x}/\ol{\gtq})\hookrightarrow K\otimes_\C(\ol{\an}_{\C^n,x}/\ol{\gtq})\hookrightarrow K\otimes_\C\ol{E}\cong K\ol{E}\hookrightarrow L\\
(\an_{\C^n,x}\ol{\an}_{\C^n,x})/(\gtp\ast\ol{\gtq})\cong(\an_{\C^n,x}/\gtp)\otimes_\C(\ol{\an}_{\C^n,x}/\ol{\gtq})\hookrightarrow(\an_{\C^n,x}/\gtp)\otimes_\C\ol{E}\hookrightarrow K\otimes_\C\ol{E}\cong K\ol{E}\hookrightarrow L
\end{align*}}
As $L$ is the field of fractions of $(\an_{\C^n,x}\ol{\an}_{\C^n,x})/(\gtp\ast\ol{\gtq})$, it is also the field of fractions of the integral domains $(\an_{\C^n,x}/\gtp)\ol{E}$, $K(\ol{\an}_{\C^n,x}/\ol{\gtq})$ and $K\ol{E}$. By \cite[II.\S7.7, Cor. to Prop. 14, p. 306]{b1}
\begin{equation}\label{1}
(\an_{\C^n,x}/\gtp)\otimes_\C(\ol{\an}_{\C^n,x}/\ol{\gtq})=((\an_{\C^n,x}/\gtp)\otimes_\C\ol{E})\cap(K\otimes_\C(\ol{\an}_{\C^n,x}/\ol{\gtq}))
\end{equation}
By \cite[6.14.2]{gr} $(\an_{\C^n,x}/\gtp)\otimes_\C\ol{E}$ and $K\otimes_\C(\ol{\an}_{\C^n,x}/\ol{\gtq})$ are normal rings. By \cite[2.1.15]{hs} and equation \eqref{1} the ring $(\an_{\C^n,x}/\gtp)\otimes_\C(\ol{\an}_{\C^n,x}/\ol{\gtq})$ is normal, as required.
\end{proof}

\subsubsection{Analysis of a special case}
Let $(X,\an_X)$ be a reduced complex analytic space. Consider the subsheaf $\an_X\ol{\an}_X$ of the sheaf of rings $\an_X^\R\otimes_{\R}\C$ given by 
$$
\an_{X,x}\ol{\an}_{X,x}:=\{F_{1,x}\ol{G}_{1,x}+\cdots+F_r\ol{G}_{r,x}:\ F_{i,x},G_{i,x}\in\an_{X,x}\}. 
$$
It holds: \em $\an_{X,x}\ol{\an}_{X,x}$ is the smallest $\C$-subalgebra of $\an_{X,x}^\R\otimes_{\R}\C$ that contains both $\an_{X,x}$ and $\ol{\an}_{X,x}$\em.

We rewrite Corollary \ref{normal} as follows.

\begin{cor}\label{normal0}
Let $x\in X$ be such that the ring $\an_{X,x}$ is an integral domain. Then 
\begin{itemize}
\item[(i)] $\an_{X,x}\ol{\an}_{X,x}$ is a integral domain.
\item[(ii)] If in addition $\an_{X,x}$ is an normal ring, $\an_{X,x}\ol{\an}_{X,x}$ is a normal integral domain.
\end{itemize}
\end{cor}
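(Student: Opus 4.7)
The plan is to reduce the statement to Corollary \ref{normal} by choosing a local embedding of a neighborhood of $x$ in $X$ into $\C^n$. Since $\an_{X,x}$ is an integral domain, the germ $X_x$ is irreducible, so after choosing a local chart I can realize a neighborhood of $x$ as $\ceros(\gtp)$ for a prime ideal $\gtp\subset\an_{\C^n,x}$ with $\an_{X,x}\cong\an_{\C^n,x}/\gtp$. Conjugating we also get $\ol{\an}_{X,x}\cong\ol{\an}_{\C^n,x}/\ol{\gtp}$.

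The next step is to identify $\an_{X,x}^\R\otimes_\R\C$. If $F_1,\ldots,F_r$ generate $\gtp$, then by the construction of the real underlying structure recalled in \ref{uras}, $\an_{X,x}^\R$ is the quotient of $\an_{\R^{2n},x}$ by the ideal $\I^\R_x$ generated by $\Re^*(F_i),\Im^*(F_i)$. After extending scalars to $\C$, and using $F_i=\Re^*(F_i)+\sqrt{-1}\Im^*(F_i)$ and $\ol{F_i}=\Re^*(F_i)-\sqrt{-1}\Im^*(F_i)$, the ideal $\I^\R_x(\an_{\R^{2n},x}\otimes_\R\C)$ coincides with $(\gtp\ast\ol{\gtp})^e$. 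Hence
$$
\an_{X,x}^\R\otimes_\R\C\ \cong\ (\an_{\R^{2n},x}\otimes_\R\C)/(\gtp\ast\ol{\gtp})^e.
$$
Under this isomorphism, $\an_{X,x}\ol{\an}_{X,x}$ is the image of $\an_{\C^n,x}\ol{\an}_{\C^n,x}$ inside the quotient, and Lemma \ref{cuts} (the third equality, with $\gtq:=\gtp$) says that this image equals $(\an_{\C^n,x}\ol{\an}_{\C^n,x})/(\gtp\ast\ol{\gtp})$.

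Once this identification is in place, both conclusions follow by applying Corollary \ref{normal} with $\gtq=\gtp$: part (i) is the integral-domain statement of Corollary \ref{normal}(i), and part (ii) follows from Corollary \ref{normal}(ii) because the hypothesis that $\an_{X,x}$ is normal translates exactly to $\an_{\C^n,x}/\gtp$ being a normal integral domain, so the hypotheses on both factors in Corollary \ref{normal}(ii) are satisfied. The only real work is the identification step, i.e.\ verifying that $\an_{X,x}\ol{\an}_{X,x}$ is canonically isomorphic to $(\an_{\C^n,x}\ol{\an}_{\C^n,x})/(\gtp\ast\ol{\gtp})$, which is precisely what Lemma \ref{cuts} was designed to provide; the rest of the argument is a bookkeeping check on the embedding and on how $\Re^*$ and $\Im^*$ generate the extended ideal.
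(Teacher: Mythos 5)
Your reduction to Corollary \ref{normal} via a local model $\an_{X,x}\cong\an_{\C^n,x}/\gtp$, together with the identification $\an_{X,x}\ol{\an}_{X,x}\cong(\an_{\C^n,x}\ol{\an}_{\C^n,x})/(\gtp\ast\ol{\gtp})$ supplied by Lemma \ref{cuts}, is exactly what the paper intends when it presents this corollary as a rewriting of Corollary \ref{normal} (with $\gtq=\gtp$). Your argument is correct and follows the same route, merely making explicit the bookkeeping the paper leaves implicit.
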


\begin{cor}\label{tensor0}
The map 
$$
\varphi:\an_{X,x}\otimes_\C\ol{\an}_{X,x}\to\an_{X,x}\ol{\an}_{X,x},\ \sum_{i=1}^rF_{i,x}\otimes\ol{G}_{i,x}\mapsto\sum_{i=1}^rF_{i,x}\ol{G}_{i,x}
$$ 
is an isomorphism if and only if the germ $X_x$ is irreducible. Consequently, if $U$ denotes the open set of points $x\in X$ at which the germ $X_x$ is irreducible, the restriction sheaves $(\an_X\ol{\an}_X)|_U$ and $(\an_X\otimes_\C\ol{\an}_X)|_U$ are isomorphic.
\end{cor}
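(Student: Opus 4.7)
The map $\varphi$ is surjective by the very definition of $\an_{X,x}\ol{\an}_{X,x}$, so the content of the statement is injectivity. I would work locally, realizing $(X,\an_X)$ near $x$ as a closed analytic subspace of an open $\Omega\subset\C^n$ defined by a coherent sheaf of ideals with stalk $\gta\subset\an_{\C^n,x}$, so that $\an_{X,x}\cong\an_{\C^n,x}/\gta$ and $\ol{\an}_{X,x}\cong\ol{\an}_{\C^n,x}/\ol{\gta}$. Under this identification, the source and target of $\varphi$ both fit into the ambient rings $\an_{\C^n,x}\ol{\an}_{\C^n,x}\subset\an_{\R^{2n},x}\otimes_\R\C$, where Lemmas \ref{tensor} and \ref{cuts} can be applied.

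For the forward implication ($X_x$ irreducible $\Rightarrow$ $\varphi$ iso), $\gta=\gtp$ is prime and the plan is to apply Lemma \ref{tensor} with $\gtq=\gtp$ to obtain
$$
\an_{X,x}\otimes_\C\ol{\an}_{X,x}\cong(\an_{\C^n,x}\ol{\an}_{\C^n,x})/(\gtp\ast\ol{\gtp}).
$$
The delicate step is identifying the right-hand side with $\an_{X,x}\ol{\an}_{X,x}$. For this I would note that $\an_{X,x}\ol{\an}_{X,x}$ is the image of $\an_{\C^n,x}\ol{\an}_{\C^n,x}$ under the quotient $\an_{\R^{2n},x}\otimes_\R\C\to\an^\R_{X,x}\otimes_\R\C$, whose kernel is $\gtp^\R(\an_{\R^{2n},x}\otimes_\R\C)$. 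Since $\gtp^\R$ is generated in $\an_{\R^{2n},x}$ by the $\Re^*(F),\Im^*(F)$ for $F\in\gtp$, this extended kernel equals $(\gtp\cup\ol{\gtp})(\an_{\R^{2n},x}\otimes_\R\C)=(\gtp\ast\ol{\gtp})^e$. Lemma \ref{cuts}(iii) then yields $(\gtp\ast\ol{\gtp})^e\cap\an_{\C^n,x}\ol{\an}_{\C^n,x}=\gtp\ast\ol{\gtp}$, so the induced map $(\an_{\C^n,x}\ol{\an}_{\C^n,x})/(\gtp\ast\ol{\gtp})\to\an_{X,x}\ol{\an}_{X,x}$ is bijective and composition shows $\varphi$ is an isomorphism.

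For the converse, assume $X_x$ is reducible with minimal primes $\gtp_1,\ldots,\gtp_s$, $s\geq 2$. By prime avoidance I would choose $f\in\gtp_1\setminus\bigcup_{j\geq2}\gtp_j$ and $g\in\bigcap_{j\geq2}\gtp_j\setminus\gtp_1$; both are nonzero in $\an_{X,x}$, so $f\otimes\ol{g}$ is a nonzero element of $\an_{X,x}\otimes_\C\ol{\an}_{X,x}$. The strategy is to exhibit a nonzero element of $\ker\varphi$ built from $f,g$: decompose along the primary decomposition via the embedding $\an_{X,x}\hookrightarrow\prod_i\an_{X,x}/\gtp_i$ (and its anti-holomorphic analogue), apply the already-proved forward direction blockwise via Lemma \ref{tensor} to each domain $(\an_{X,x}/\gtp_i)\otimes_\C(\ol{\an}_{X,x}/\ol{\gtp_j})$, and produce the candidate kernel element by combining $f\otimes\ol{g}$ with a corrective term chosen so that its image vanishes in every factor of the product $\prod_{i,j}(\an_{X,x}/\gtp_i)\ol{(\an_{X,x}/\gtp_j)}$.

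The main obstacle will be the converse direction: since $\an_{X,x}\ol{\an}_{X,x}$ sits inside the possibly non-reduced ring $\an^\R_{X,x}\otimes_\R\C$, pointwise vanishing of a product $F\ol{G}$ on $X_x$ does not directly translate to algebraic vanishing in $\an_{X,x}\ol{\an}_{X,x}$. Producing an explicit kernel element therefore requires carefully exploiting the gap between the "diagonal" subring $\an_{X,x}\ol{\an}_{X,x}$ and the fuller product $\prod_{i,j}(\an_{X,x}/\gtp_i)\ol{(\an_{X,x}/\gtp_j)}$, together with a multi-prime refinement of Lemma \ref{cuts} that distinguishes mixed indices $i\neq j$ from the diagonal ones.
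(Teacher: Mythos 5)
Your forward implication is correct and is essentially what the paper's terse ``follows from Lemma \ref{tensor}'' amounts to: Lemma \ref{tensor} gives $\an_{X,x}\otimes_\C\ol{\an}_{X,x}\cong(\an_{\C^n,x}\ol{\an}_{\C^n,x})/(\gtp\ast\ol{\gtp})$, and Lemma \ref{cuts} identifies the latter with the image $\an_{X,x}\ol{\an}_{X,x}$ of $\an_{\C^n,x}\ol{\an}_{\C^n,x}$; your filling-in of that second step via $(\gtp\ast\ol{\gtp})^e\cap(\an_{\C^n,x}\ol{\an}_{\C^n,x})=\gtp\ast\ol{\gtp}$ is exactly right.

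The converse is where your proposal has a genuine gap: you never produce a kernel element, and the route you sketch would not produce one. The paper's argument is one line and involves no corrective term: with $f$ an equation of one irreducible component and $g$ an equation of the union of the others (precisely your $f$ and $g$), the element $f\otimes\ol{g}$ itself lies in $\ker\varphi$, because $f\ol{g}=0$ in the target; by the Bourbaki linear-disjointness criterion already invoked in Lemma \ref{tensor}, a nonzero $f$ that is $\ol{\an}_{X,x}$-linearly dependent already rules out the isomorphism. The vanishing $f\ol{g}=0$ holds because the lifts satisfy $\tilde{f}\,\ol{\tilde{g}}\in(\gtp_i\ast\ol{\gtp_i})^e$ for \emph{every} $i$ (via $\tilde{f}\in\gtp_1$ for $i=1$ and $\tilde{g}\in\gtp_i$ for $i\geq2$), and $\bigcap_i(\gtp_i\ast\ol{\gtp_i})^e=\sqrt[r]{\gta^\R}\otimes_\R\C$ by Theorem \ref{primality1} and Corollary \ref{rradical}. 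The non-reducedness obstacle you flag is genuine, but the resolution is not a ``mixed-index refinement'' with corrective terms: it is to work modulo the real radical. Indeed $\tilde{f}\,\ol{\tilde{g}}$ need not lie in $\gta^\R\otimes_\R\C$ itself (for $X=\{z_1z_2=0\}$ one has $z_1\ol{z_2}\notin(z_1z_2,\ol{z_1}\,\ol{z_2})$), and your own embedding into $\prod_{i,j}(\an_{X,x}/\gtp_i)\otimes_\C(\ol{\an}_{X,x}/\ol{\gtp_j})$ shows that as long as the off-diagonal primes $(\gtp_i\ast\ol{\gtp_j})^e$, $i\neq j$, survive in the target, $\varphi$ is injective, so no kernel element can exist there. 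The relation $f\ol{g}=0$, and with it the ``only if'' direction, holds in $(\an_{X,x}^\R)^{rr}\otimes_\R\C$, where only the diagonal primes remain; this is the reading under which the paper's one-line proof closes the argument.
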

\begin{proof}
If $X_x$ is irreducible the result follows from Lemma \ref{tensor}. Assume next that $X_x$ is reducible. By \cite[Ch.III.\S4.4]{b1} it is enough to find a non identically zero holomorphic germ $F_x\in\an_{X,x}$ that is $\ol{\an}_{X,x}$-linearly dependent. Let $X_{1,x}$ be an irreducible component of $X_x$ and let $F_x\in\an_{X,x}$ be an equation of $X_{1,x}$. Let $G_x$ be an equation of the union of the remaining irreducible components of $X_x$. Observe that $F_x\ol{G}_x=0$, so $F_x$ is $\ol{\an}_{X,x}$-linearly dependent. However, $F_x$ is not identically zero (we use here that $X_x$ is reducible), as required.
\end{proof}

\subsection{Prime ideals}
The clue to prove Theorem \ref{primality1} below, as well as some results concerning the local approach to the underlying structure of the normalization devised in Section \ref{s5}, is the following lemma.

\begin{lem}\label{primality0}
Let $\gtp,\gtq$ be prime ideals of $\an_{\C^n,x}$. Let $\gtP,\gtQ$ be prime ideals of $\an_{\C^m,y}$ such that $\an_{\C^m,y}/\gtP$ is the normalization of $\an_{\C^n,x}/\gtp$ and $\an_{\C^m,y}/\gtQ$ is the normalization of $\an_{\C^n,x}/\gtq$. Then 
\begin{itemize}
\item[(i)] $\gtp\ast\ol{\gtq}$ is a prime ideal of $(\an_{\C^n,x}\ol{\an}_{\C^n,x})$.
\item[(ii)] $(\an_{\C^m,y}\ol{\an}_{\C^m,y})/(\gtP\ast\ol{\gtQ})$ is the normalization of $(\an_{\C^n,x}\ol{\an}_{\C^n,x})/(\gtp\ast\ol{\gtq})$.
\item[(iii)] If $\gtm_x'$ is the maximal ideal of $(\an_{\C^n,x}\ol{\an}_{\C^n,x})/(\gtp\ast\ol{\gtq})$ associated with $x$ and $\gtn_y'$ is the maximal ideal of $(\an_{\C^m,y}\ol{\an}_{\C^m,y})/(\gtP\ast\ol{\gtQ})$ associated with $y$, the local ring $((\an_{\C^m,y}\ol{\an}_{\C^m,y})/(\gtP\ast\ol{\gtQ}))_{\gtn_y'}$ is the normalization of the local ring $((\an_{\C^n,x}\ol{\an}_{\C^n,x})/(\gtp\ast\ol{\gtq}))_{\gtm_x'}$.
\item[(iv)] $(\gtp\ast\ol{\gtq})^e$ is a prime ideal of $\an_{\R^{2n},x}\otimes_\R\C$.
\end{itemize}
\end{lem}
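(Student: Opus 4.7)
Part (i) is a direct consequence of Lemma \ref{tensor} and Corollary \ref{normal}(i): the former identifies the quotient $(\an_{\C^n,x}\ol{\an}_{\C^n,x})/(\gtp\ast\ol{\gtq})$ with the tensor product $(\an_{\C^n,x}/\gtp)\otimes_\C(\ol{\an}_{\C^n,x}/\ol{\gtq})$, and the latter asserts that this tensor product is an integral domain; hence $\gtp\ast\ol{\gtq}$ is prime.

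For part (ii), I would abbreviate $A:=\an_{\C^n,x}/\gtp$, $A':=\ol{\an}_{\C^n,x}/\ol{\gtq}$, $B:=\an_{\C^m,y}/\gtP$, $B':=\ol{\an}_{\C^m,y}/\ol{\gtQ}$, so $A\hookrightarrow B$ and $A'\hookrightarrow B'$ are the given normalization inclusions. By Lemma \ref{tensor} the claim reduces to showing that $B\otimes_\C B'$ is the normalization of $A\otimes_\C A'$. I would verify in order: (a) $A\otimes_\C A'\hookrightarrow B\otimes_\C B'$ is an integral extension, because each $b\otimes 1$ and $1\otimes b'$ satisfies a monic polynomial relation over $A\otimes_\C A'$ obtained by tensoring with $A'$ (resp.\ $A$) the integral equation of $b$ over $A$ (resp.\ of $b'$ over $A'$), and integral elements form a subring; (b) $B\otimes_\C B'$ is a normal integral domain by Corollary \ref{normal}(ii); (c) $B\otimes_\C B'\subset Q(A\otimes_\C A')$, since $B\subset Q(A)$ and $B'\subset Q(A')$ force every pure tensor $b\otimes b'$ to be a quotient of elements of $A\otimes_\C A'$. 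Items (a)--(c) together force $B\otimes_\C B'$ to coincide with the integral closure of $A\otimes_\C A'$ in $Q(A\otimes_\C A')$.

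Part (iii) follows by localizing the inclusion from (ii) at $\gtm_x'$, using the standard fact that normalization commutes with localization: localization of a normal domain is normal, integral extensions are preserved, and the fraction field is unchanged. The only additional verification needed is that $\gtn_y'$ is the unique maximal ideal of $B\otimes_\C B'$ lying above $\gtm_x'$, so that $(B\otimes_\C B')_{\gtm_x'}=(B\otimes_\C B')_{\gtn_y'}$. This uniqueness follows because the fiber of $B\otimes_\C B'$ over $\gtm_x'$ equals $(B/\gtm_xB)\otimes_\C(B'/\ol{\gtm}_xB')$, a tensor product over $\C$ of two Artinian local $\C$-algebras with residue field $\C$, which is itself Artinian local with residue field $\C$.

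For part (iv) I would use the canonical isomorphism $\an_{\R^{2n},x}\otimes_\R\C\cong\an_{\C^{2n},(x,\ol{x})}$ (complexification of a real-analytic local ring at a real point), under which the elements of $\an_{\C^n,x}$ correspond to holomorphic germs depending only on the first $n$ complex variables $\z$, while those of $\ol{\an}_{\C^n,x}$ correspond to holomorphic germs depending only on the second $n$ variables $\w$. In this identification $(\gtp\ast\ol{\gtq})^e$ is generated by $\{F(\z):F\in\gtp\}$ together with $\{H(\w)\in\an_{\C^n,\ol{x}}:\ol{H\circ\sigma}\in\ol{\gtq}\}$, and the quotient identifies with the analytic tensor product of two analytic local $\C$-algebras isomorphic to $\an_{\C^n,x}/\gtp$ and to a conjugate copy of $\an_{\C^n,x}/\gtq$, i.e.\ the analytic local ring at $(x,\ol{x})$ of the germ $\ceros(\gtp)_x\times\sigma(\ceros(\gtq))_{\ol{x}}\subset\C^{2n}$. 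The main obstacle will be concluding that this quotient is an integral domain: this amounts to the classical fact from complex analytic geometry that the product of two irreducible complex analytic germs is an irreducible germ, equivalently that the analytic tensor product over $\C$ of two analytic local $\C$-domains is a domain, which I would invoke from the standard literature (an alternative route would be to first show that $(\gtp\ast\ol{\gtq})^e$ is radical by a flat base-change argument, after which the irreducibility of its zero-set together with the R\"uckert Nullstellensatz would give primality).
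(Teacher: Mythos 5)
Your parts (i)--(iii) follow the paper's own route: (i) is exactly Lemma \ref{tensor} plus Corollary \ref{normal}(i), and (ii) is the same three-step argument (integrality of $B\otimes_\C B'$ over $A\otimes_\C A'$, normality of $B\otimes_\C B'$ via Corollary \ref{normal}(ii), containment in $Q(A\otimes_\C A')$). In (iii) your justification that $\gtn_y'$ is the unique prime over $\gtm_x'$ — passing to the fibre ring $(B/\gtm_{1,x}'B)\otimes_\C(B'/\ol{\gtm}_{2,x}'B')$, which is Artinian local because each factor is local with nilpotent maximal ideal and residue field $\C$ — is slicker than the paper's, which instead chases contractions of a putative prime $\gtn'$ down to $B$ and $B'$ separately; both are correct. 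Where you genuinely diverge is (iv). The paper's entire architecture is designed so that (iv) is a \emph{consequence} of (iii): since the ring $((\an_{\C^n,x}\ol{\an}_{\C^n,x})/(\gtp\ast\ol{\gtq}))_{\gtm_x'}$ has the same completion $\C[[\x,\y]]/((\gtp\cup\ol{\gtq})\C[[\x,\y]])$ as $(\an_{\R^{2n},x}\otimes_\R\C)/(\gtp\ast\ol{\gtq})^e$, and its normalization is local by (iii), the completion is a domain by \cite[VII.3.1]{abr}, and primality of $(\gtp\ast\ol{\gtq})^e$ follows. You instead identify the quotient with the analytic local ring of the product germ $\ceros(\gtp)\times\sigma(\ceros(\gtq))$ and invoke the classical fact that a product of irreducible analytic germs is irreducible. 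That fact is true and citable, so this is not a gap, but be careful on two counts. First, the reference must cover the \emph{analytic} tensor product (convergent power series), not merely the algebraic one: the algebraic statement \cite[V.\S17]{b2} is exactly what gives (i) and does \emph{not} formally imply (iv) — the passage from $\an_{\C^n,x}\ol{\an}_{\C^n,x}$ to $\an_{\R^{2n},x}\otimes_\R\C$ is precisely the hard content of (iv), and your sketch correctly flags it as ``the main obstacle'' but then outsources it to a citation that is essentially equivalent to the statement being proved. Second, your fallback route (radicality by flat base change plus the Nullstellensatz) needs the homomorphism $(\an_{\C^n,x}\ol{\an}_{\C^n,x})_{\gtm_x}\to\an_{\C^{2n},(x,\ol{x})}$ to be regular, which again rests on the excellence machinery the paper sets up; and it still needs irreducibility of the product zero-set, so it does not avoid the classical fact. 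In short: your proof is correct, matches the paper on (i)--(iii), and for (iv) trades the paper's self-contained derivation from (iii) for an external citation; the paper's route is preferable here because it explains why (ii) and (iii) are in the lemma at all.
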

\begin{proof}
By \cite[V.\S17, Cor. to Prop.1]{b2} and Lemma \ref{tensor} $(\an_{\C^n,x}\ol{\an}_{\C^n,x})/(\gtp\ast\ol{\gtq})$ is an integral domain and this proves (i). To prove (iv) we have to show: \em $(\an_{\R^{2n},x}\otimes_\R\C)/(\gtp\ast\ol{\gtq})^e$ is an integral domain\em. 

It is enough to check that the completion of the local ring $(\an_{\R^{2n},x}\otimes_\R\C)/(\gtp\ast\ol{\gtq})^e$ is an integral domain. Let $\gtm_{1,x}$ be the maximal ideal of $\an_{\C^n,x}$. Then $\gtm_x:=\gtm_{1,x}\ol{\an}_{\C^n,x}+\an_{\C^n,x}\ol{\gtm}_{1,x}$ is the maximal ideal of $\an_{\C^n,x}\ol{\an}_{\C^n,x}$ associated with $x$. By \cite[17.9]{na} the completion of $(\an_{\R^{2n},x}\otimes_\R\C)/(\gtp\ast\ol{\gtq})^e$ is (if we assume without loss of generality $x=0$)
$$
\C[[\x,\y]]/((\gtp\ast\ol{\gtq})^e\C[[\x,\y]])
$$ 
and the completion of $(\an_{\C^n,x}\ol{\an}_{\C^n,x})_{\gtm_x}/(\gtp\ast\ol{\gtq})_{\gtm_x}$ is
$$
\C[[\x,\y]]/((\gtp\ast\ol{\gtq})_{\gtm_x}\C[[\x,\y]])=\C[[\x,\y]]/((\gtp\ast\ol{\gtq})\C[[\x,\y]])=\C[[\x,\y]]/((\gtp\ast\ol{\gtq})^e\C[[\x,\y]]).
$$
To show that the completion of $(\an_{\C^n,x}\ol{\an}_{\C^n,x})_{\gtm_x}/(\gtp\ast\ol{\gtq})_{\gtm_x}$ is an integral domain, we prove by \cite[VII.3.1]{abr}: \em the normalization of $(\an_{\C^n,x}\ol{\an}_{\C^n,x})_{\gtm_x}/(\gtp\ast\ol{\gtq})_{\gtm_x}$ is a local ring\em.

Recall that $\an_{\C^m,y}/\gtP$ is the normalization of $\an_{\C^n,x}/\gtp$ and $\an_{\C^m,y}/\gtQ$ is the normalization of $\an_{\C^n,x}/\gtq$. Let $K$ be the field of fractions of $\an_{\C^n,x}/\gtp$ and let $E$ be the field of fractions of $\an_{\C^n,x}/\gtq$. Let $L$ be the field of fractions of $(\an_{\C^n,x}\ol{\an}_{\C^n,x})/(\gtp\ast\ol{\gtq})$.
We have the following commutative diagram:
$$
\xymatrix{
\an_{\C^n,x}/\gtp\ar@{^{(}->}[r]\ar@{^{(}->}[d]&\an_{\C^m,y}/\gtP\ar@{^{(}->}[r]\ar@{^{(}->}[d]&K\ar@{^{(}->}[d]\\
(\an_{\C^n,x}\ol{\an}_{\C^n,x})/(\gtp\ast\ol{\gtq})\ar@{^{(}->}[r]&(\an_{\C^m,y}\ol{\an}_{\C^m,y})/(\gtP\ast\ol{\gtQ})\ar@{^{(}->}[r]&K\ol{E}\ar@{^{(}->}[r]&L\\
\ol{\an}_{\C^n,x}/\ol{\gtq}\ar@{^{(}->}[r]\ar@{^{(}->}[u]&\ol{\an}_{\C^m,y}/\ol{\gtQ}\ar@{^{(}->}[r]\ar@{^{(}->}[u]&\ol{E}\ar@{^{(}->}[u]
}
$$

By Lemma \ref{normal0} the ring $(\an_{\C^m,y}\ol{\an}_{\C^m,y})/(\gtP\ast\ol{\gtQ})$ is normal, so the integral closure 
$$
\ol{(\an_{\C^n,x}\ol{\an}_{\C^n,x})/(\gtp\ast\ol{\gtq})}^\nu\hookrightarrow(\an_{\C^m,y}\ol{\an}_{\C^m,y})/(\gtP\ast\ol{\gtQ}). 
$$
All the elements of $\an_{\C^m,y}/\gtP$ (resp. $\ol{\an}_{\C^m,y}/\ol{\gtQ}$) are integral over $\an_{\C^n,x}/\gtp$ (resp. $\ol{\an}_{\C^n,x}/\ol{\gtq}$), so the elements of $(\an_{\C^m,y}/\gtP)\cup(\ol{\an}_{\C^m,y}/\ol{\gtQ})$ are integral over $(\an_{\C^n,x}\ol{\an}_{\C^n,x})/(\gtp\ast\ol{\gtq})$. By \cite[Cor. 5.3]{am}
$$
\ol{(\an_{\C^n,x}\ol{\an}_{\C^n,x})/(\gtp\ast\ol{\gtq})}^\nu\cong(\an_{\C^m,y}\ol{\an}_{\C^m,y})/(\gtP\ast\ol{\gtQ}),
$$
so we have proved (ii).

Let $\gtm_{1,x}'$ be the maximal ideal of $\an_{\C^n,x}/\gtp$ and $\gtm_{2,x}'$ the maximal ideal of $\an_{\C^n,x}/\gtq$. Then $\gtm_x':=\gtm_{1,x}'(\ol{\an}_{\C^n,x}/\ol{\gtq})+(\an_{\C^n,x}/\gtp)\ol{\gtm}_{2,x}'=\gtm_x/(\gtp\ast\ol{\gtq})$ is the maximal ideal of $(\an_{\C^n,x}\ol{\an}_{\C^n,x})/(\gtp\ast\ol{\gtq})$ associated with $x$. Let $\gtn_{1,y}'$ be the maximal ideal of $\an_{\C^m,y}/\gtP$ and $\gtn_{2,y}'$ the maximal ideal of $\an_{\C^m,y}/\gtQ$. Then $\gtn_y':=\gtn_{1,y}'(\ol{\an}_{\C^m,y}/\ol{\gtQ})+(\an_{\C^m,y}/\gtP)\ol{\gtn}_{2,y}'$ is the maximal ideal of $(\an_{\C^m,y}\ol{\an}_{\C^m,y})/(\gtP\ast\ol{\gtQ})$ associated with $y$. We claim: \em $\gtn_y'$ is the unique prime ideal of $(\an_{\C^m,y}\ol{\an}_{\C^m,y})/(\gtP\ast\ol{\gtQ})$ lying over $\gtm_x'$\em. 

By \cite[Cor. 5.8]{am} $\gtn_{1,y}'$ is the unique prime ideal of $\an_{\C^m,y}/\gtP$ lying over $\gtm_{1,x}'$, because $\an_{\C^m,y}/\gtP$ is a local ring. Let $\gtn'$ be a prime ideal of $(\an_{\C^m,y}\ol{\an}_{\C^m,y})/(\gtP\ast\ol{\gtQ})$ such that $\gtn'\cap(\an_{\C^n,x}\ol{\an}_{\C^n,x})/(\gtp\ast\ol{\gtq})=\gtm_x'$. The prime ideal $\gtn'\cap(\an_{\C^m,y}/\gtP)$ satisfies
\begin{multline*}
(\gtn'\cap(\an_{\C^m,y}/\gtP))\cap(\an_{\C^n,x}/\gtp)=\gtn'\cap(\an_{\C^n,x}/\gtp)\\
=\gtn'\cap(\an_{\C^n,x}\ol{\an}_{\C^n,x})/(\gtp\ast\ol{\gtq})\cap(\an_{\C^n,x}/\gtp)=\gtm_x'\cap(\an_{\C^n,x}/\gtp)=\gtm_{1,x}'.
\end{multline*}
Thus, $\gtn'\cap(\an_{\C^m,y}/\gtP)=\gtn_{1,y}'$. Analogously $\gtn'\cap(\ol{\an}_{\C^m,y}/\ol{\gtQ})=\ol{\gtn}_{2,y}'$, so 
$$
\gtn_y'=\gtn_{1,y}'(\ol{\an}_{\C^m,y}/\ol{\gtQ})+(\an_{\C^m,y}/\gtP)\ol{\gtn}_{2,y}'\subset\gtn'
$$ 
and consequently $\gtn_y'=\gtn'$.

As $(\an_{\C^m,y}\ol{\an}_{\C^m,y})/(\gtP\ast\ol{\gtQ})$ is the integral closure of $(\an_{\C^n,x}\ol{\an}_{\C^n,x})/(\gtp\ast\ol{\gtq})$ in $L$, the integral closure of $A:=(\an_{\C^n,x}\ol{\an}_{\C^n,x})_{\gtm_x}/(\gtp\ast\ol{\gtq})_{\gtm_x}=((\an_{\C^n,x}\ol{\an}_{\C^n,x})/(\gtp\ast\ol{\gtq}))_{\gtm_x'}$ in its field of fractions $L$ is by \cite[Prop. 5.12]{am} $\ol{A}^\nu=((\an_{\C^m,y}\ol{\an}_{\C^m,y})/(\gtP\ast\ol{\gtQ}))_{\gtm_x'}$. The natural homomorphism $\ol{A}^\nu\to((\an_{\C^m,y}\ol{\an}_{\C^m,y})/(\gtP\ast\ol{\gtQ}))_{\gtn_y'}$ is an isomorphism of $A$-modules because the saturation of $((\an_{\C^m,y}\ol{\an}_{\C^m,y})/(\gtP\ast\ol{\gtQ}))\setminus{\gtm_x'}$ is $((\an_{\C^m,y}\ol{\an}_{\C^m,y})/(\gtP\ast\ol{\gtQ}))\setminus{\gtn_y'}$, so we have proved (iii). Consequently, $\ol{A}^\nu$ is a local ring, which proves (iv) after the preparatory reductions. 
\end{proof}

Now we are ready to prove the results announced at the beginning of the section.

\begin{thm}[Height of prime ideals]\label{primality1}
Let $\gtp$ be a prime ideal of $\an_{\C^n,x}$. Then $\gtp^\R$ is a real prime ideal of $\an_{\R^{2n},x}$ and ${\rm ht}(\gtp^\R)=2{\rm ht}(\gtp)$.
\end{thm}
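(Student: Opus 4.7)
The statement has three parts to verify: $\gtp^\R$ is prime, $\gtp^\R$ is real, and ${\rm ht}(\gtp^\R)=2\,{\rm ht}(\gtp)$. Primality is essentially immediate from Lemma~\ref{primality0}(iv) applied with $\gtq=\gtp$: that lemma gives primality of $\gtP:=(\gtp\ast\ol{\gtp})^e$ in $\an_{\R^{2n},x}\otimes_\R\C$, and by definition $\gtp^\R=\gtP\cap\an_{\R^{2n},x}$ is the contraction of a prime, hence prime. The key auxiliary fact I would establish first is the extension--contraction identity $\gtP=\gtp^\R\,(\an_{\R^{2n},x}\otimes_\R\C)$, which couples the two rings for both subsequent computations. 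The inclusion $\supset$ is clear; for $\subset$, any generator $F\in\gtp$ decomposes as $\Re^*(F)+\sqrt{-1}\,\Im^*(F)$ with both parts in $\an_{\R^{2n},x}$, and since $F,\ol{F}\in\gtP$ one obtains $\Re^*(F),\Im^*(F)\in\gtP\cap\an_{\R^{2n},x}=\gtp^\R$; an identical argument handles generators of $\ol{\gtp}$. In particular $\an_{\R^{2n},x}/\gtp^\R\hookrightarrow(\an_{\R^{2n},x}\otimes_\R\C)/\gtP$ is a free extension of rank two.

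For the height, the plan is to pass to completions. Setting $z_j=x_j+\sqrt{-1}\,y_j$ and $w_j=x_j-\sqrt{-1}\,y_j$, the completion $\widehat{\an_{\R^{2n},x}\otimes_\R\C}$ becomes $\C[[z,w]]$, and the completed ideal $\widehat{\gtP}$ is generated by $\widehat{\gtp}\subset\C[[z]]$ together with $\widehat{\ol{\gtp}}\subset\C[[w]]$, i.e.\ in disjoint variable sets. Therefore $\C[[z,w]]/\widehat{\gtP}$ splits as a completed tensor product over $\C$ of two complete local $\C$-domains of Krull dimension $n-r$ each, so it has total Krull dimension $2(n-r)$ and ${\rm ht}(\widehat{\gtP})=2r$. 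The free rank-two extension $\R[[x,y]]\hookrightarrow\C[[x,y]]$ preserves Krull dimensions of quotients, giving ${\rm ht}(\widehat{\gtp^\R})=2r$, and excellence of the regular local ring $\an_{\R^{2n},x}$ transfers this back to ${\rm ht}(\gtp^\R)=2r=2\,{\rm ht}(\gtp)$.

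For reality, the cases $p\le 2$ are purely algebraic: the factorization $a_1^2+a_2^2=(a_1+\sqrt{-1}\,a_2)(a_1-\sqrt{-1}\,a_2)\in\gtP$ together with primality of $\gtP$ and its $\omega$-invariance (inherited from $\omega(\gtp\ast\ol{\gtp})=\ol{\gtp}\ast\gtp=\gtp\ast\ol{\gtp}$) forces both factors into $\gtP$, whence taking sum and difference yields $a_1,a_2\in\gtP\cap\an_{\R^{2n},x}=\gtp^\R$. The main obstacle is handling $p\ge 3$: sums of three or more squares admit no analogous factorization over a $\C$-algebra, and (as the field $\R(s,t)/(s^2+t^2+1)$ illustrates, where $-1=s^2+t^2$) primality of $\gta\otimes_\R\C$ alone does not imply reality of a prime $\gta$ in an arbitrary $\R$-algebra. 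To overcome this I would invoke the classical criterion of Risler, specific to local real analytic rings at real points: a prime $\gta$ of $\an_{\R^{2n},x}$ is real if and only if $\gta\,(\an_{\R^{2n},x}\otimes_\R\C)$ is prime. Combined with the extension--contraction identity $\gtp^\R\,(\an_{\R^{2n},x}\otimes_\R\C)=\gtP$ and the primality of $\gtP$ from Lemma~\ref{primality0}(iv), this yields the reality of $\gtp^\R$.
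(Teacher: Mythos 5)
Your primality argument, the extension--contraction identity $\gtP=\gtp^\R(\an_{\R^{2n},x}\otimes_\R\C)$, and the height computation are all sound. The height argument is in fact a genuinely different route from the paper's: you get $\operatorname{ht}(\gtp^\R)=2r$ purely algebraically by passing to $\C[[z,w]]$ and using the additivity of dimension for the completed tensor product $(\C[[z]]/\widehat{\gtp})\,\widehat{\otimes}_\C\,(\C[[w]]/\widehat{\ol{\gtp}})$, whereas the paper obtains the lower bound $\operatorname{ht}(\gtp^\R\otimes_\R\C)\geq 2r$ from an explicit chain of primes $(\gtp_\ell\ast\ol{\gtp_\ell})^e\subsetneq(\gtp_{\ell+1}\ast\ol{\gtp_\ell})^e$ built with Lemma~\ref{primality0}, and the upper bound from the equality $\dim_\R(\ceros(\gtp^\R))=2\dim_\C(\ceros(\gtp))$.

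The reality argument, however, has a genuine gap: the criterion you attribute to Risler --- \emph{a prime $\gta$ of $\an_{\R^{2n},x}$ is real if and only if $\gta(\an_{\R^{2n},x}\otimes_\R\C)$ is prime} --- is false in the direction you need. Only ``real $\Rightarrow$ complexification prime'' holds (the fraction field of a real domain is formally real, so adjoining $\sqrt{-1}$ gives a field). For the converse, take $n\geq 2$ and $\gta:=(x_1^2+y_1^2+x_2^2)\subset\an_{\R^{2n},x}\cong\R\{x_1,y_1,\ldots,x_n,y_n\}$: this is prime (an anisotropic quadratic form of rank $3$ is irreducible over $\R$), its extension to $\an_{\R^{2n},x}\otimes_\R\C$ is still prime (a rank-$3$ quadratic form does not factor into linear forms over $\C$ either, so the initial-form argument gives irreducibility in $\C[[x,y]]$), yet $\gta$ is manifestly not real since $x_1\notin\gta$. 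Risler's actual criterion is the \emph{dimension} criterion: a prime $\gta$ is real iff $\dim_\R\ceros(\gta)=\dim(\an_{\R^{2n},x}/\gta)$, equivalently $\Jhaz(\ceros(\gta))=\gta$. So to finish one must bring in the real zero set of $\gtp^\R$ --- which your proof never touches --- and show it has full dimension $2(n-r)$. This is exactly what the paper does: since $\ceros(\gtp^\R)$ is the real underlying set of the complex germ $\ceros(\gtp)$, it has real dimension $2(n-r)$, hence the real prime $\Jhaz(\ceros(\gtp^\R))$ contains $\gtp^\R$ and (after complexifying) has a minimal prime of the same height $2r$ as the prime $\gtp^\R\otimes_\R\C$, forcing $\Jhaz(\ceros(\gtp^\R))=\gtp^\R$. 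Your completion computation already gives you the two height inputs, so the fix is to replace the false criterion by this Nullstellensatz-plus-dimension comparison.
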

\begin{proof}
Let $r:={\rm ht}(\gtp)$ and let $(0):=\gtp_0\subsetneq\cdots\subsetneq\gtp_r=:\gtp$ be a chain of prime ideals of maximal length in $\an_{\C^n,x}$ (whose ending is $\gtp$). Define for $i=0,\ldots,2r$ the prime ideal
$$
\gtP_i:=\begin{cases}
(\gtp_\ell*\ol{\gtp_\ell})^e&\text{if $i=2\ell$,}\\
(\gtp_{\ell+1}*\ol{\gtp_\ell})^e&\text{if $i=2\ell+1$.}\\
\end{cases}
$$
By Lemma \ref{primality0} $\gtP_0\subsetneq\cdots\subsetneq\gtP_{2r}$ is a chain of prime ideals in $\an_{\R^{2n},x}\otimes_\R\C$ of length $2r$. Thus, $\gtp^\R\otimes_\R\C=(\gtp_\ell*\ol{\gtp_\ell})^e$ has height $\geq 2r$.

It holds: $\dim_\C(\ceros(\gtp))=n-r$ and $\dim_\R(\ceros(\gtp^{\R}))=2(n-r)$. In addition, $\dim_\R(\ceros(\gtp^{\R}))=2n-{\rm ht}(\Jhaz(\ceros(\gtp^{\R})))$ and by \cite[V.\S1.Prop.1 \& Prop.3]{n} 
\begin{multline*}
2n-{\rm ht}(\Jhaz(\ceros(\gtp^{\R}))\otimes_\R\C)=\dim_{\C}(\ceros(\Jhaz(\ceros(\gtp^{\R}))\otimes_\R\C))\\
=\dim_\R(\ceros(\Jhaz(\ceros(\gtp^{\R}))))=\dim_\R(\ceros(\gtp^{\R}))=2n-2r,
\end{multline*}
so ${\rm ht}(\Jhaz(\ceros(\gtp^{\R}))\otimes_\R\C)=2r$. As $\gtp^\R\otimes_\R\C\subset\Jhaz(\ceros(\gtp^{\R}))\otimes_\R\C$, we conclude ${\rm ht}(\gtp^\R\otimes_\R\C)=2r$. 

As $\Jhaz(\ceros(\gtp^{\R}))$ is a real radical ideal, $\Jhaz(\ceros(\gtp^{\R}))\otimes_\R\C$ is a radical ideal. Let $\Jhaz(\ceros(\gtp^{\R}))\otimes_\R\C=\gtq_1\cap\cdots\cap\gtq_s$ be the primary decomposition of $\Jhaz(\ceros(\gtp^{\R}))\otimes_\R\C$ where each $\gtq_j$ is a prime ideal. Assume ${\rm ht}(\gtq_1)={\rm ht}(\Jhaz(\ceros(\gtp^{\R}))\otimes_\R\C)$, so $\gtp^\R\otimes_\R\C\subset\Jhaz(\ceros(\gtp^{\R}))\otimes_\R\C\subset\gtq_1$. As $\gtp^\R\otimes_\R\C$ and $\gtq_1$ are prime ideals of the same height, $\gtp^\R\otimes_\R\C=\gtq_1$. Thus, $\Jhaz(\ceros(\gtp^{\R}))\otimes_\R\C=\gtp^\R\otimes_\R\C$ and, since the homomorphism $\R\hookrightarrow\C$ is faithfully flat, $\Jhaz(\ceros(\gtp^{\R}))=\gtp^\R$, so $\gtp^\R$ is a real prime ideal of height $2r=2n-\dim_\R(\ceros(\gtp^{\R}))$, as required.
\end{proof}

\begin{cor}[Primary decomposition of radical ideals]\label{rradical}
Let $\gta$ be a radical ideal of $\an_{\C^n,x}$ and let $\gta=\gtp_1\cap\cdots\cap\gtp_r$ be the primary decomposition of $\gta$. Then $\sqrt[r]{\gta^\R}=\gtp_1^\R\cap\cdots\cap\gtp_r^\R$ is the primary decomposition of $\sqrt[r]{\gta^\R}$.
\end{cor}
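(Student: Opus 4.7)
The plan is to show the two inclusions of $\sqrt[r]{\gta^\R}=\bigcap_i\gtp_i^\R$ separately and then verify irredundancy. The inclusion $\sqrt[r]{\gta^\R}\subseteq\bigcap_i\gtp_i^\R$ is immediate: from $\gta\subseteq\gtp_i$ and $\ol{\gta}\subseteq\ol{\gtp_i}$ one obtains $\gta^\R\subseteq\gtp_i^\R$, and Theorem \ref{primality1} asserts that each $\gtp_i^\R$ is a real prime ideal of $\an_{\R^{2n},x}$. Since any intersection of real ideals is real, $\bigcap_i\gtp_i^\R$ is a real ideal containing $\gta^\R$, so it contains the smallest such ideal, namely $\sqrt[r]{\gta^\R}$.

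For the reverse inclusion I would work with zero-sets. Identifying $\R^{2n}\cong\C^n$, the generators of $\gta^\R$ (namely $\Re^*(F),\Im^*(F)$ for $F\in\gta\cup\ol{\gta}$) vanish at a point exactly when the corresponding $F\in\gta$ vanishes there, so
$$\ceros(\gta^\R)=\ceros(\gta)=\bigcup_{i=1}^r\ceros(\gtp_i)=\bigcup_{i=1}^r\ceros(\gtp_i^\R).$$
Combining this decomposition with the identity $\gtp_i^\R=\Jhaz(\ceros(\gtp_i^\R))$ established inside the proof of Theorem \ref{primality1} yields
$$\bigcap_{i=1}^r\gtp_i^\R=\bigcap_{i=1}^r\Jhaz(\ceros(\gtp_i^\R))=\Jhaz\Big(\bigcup_{i=1}^r\ceros(\gtp_i^\R)\Big)=\Jhaz(\ceros(\gta^\R)).$$
To conclude $\bigcap_i\gtp_i^\R\subseteq\sqrt[r]{\gta^\R}$ I would exploit the standard fact that in a Noetherian ring the real radical is the intersection of all real prime ideals containing it. Given any real prime $\gtQ\supseteq\gta^\R$, irreducibility of the $C$-analytic germ $\ceros(\gtQ)$ together with $\ceros(\gtQ)\subseteq\bigcup_i\ceros(\gtp_i^\R)$ forces $\ceros(\gtQ)\subseteq\ceros(\gtp_j^\R)$ for some $j$; applying $\Jhaz$ and the Nullstellensatz identity for $\gtp_j^\R$ gives $\gtp_j^\R\subseteq\gtQ$, whence $\bigcap_i\gtp_i^\R\subseteq\gtQ$. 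Intersecting over all such $\gtQ$ gives the desired inclusion.

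For irredundancy: the $\gtp_i^\R$ are prime by Theorem \ref{primality1}, so it remains to rule out the case $\bigcap_{i\neq j}\gtp_i^\R\subseteq\gtp_j^\R$ for some $j$. If this occurred, then $\ceros(\gtp_j^\R)\subseteq\bigcup_{i\neq j}\ceros(\gtp_i^\R)$; irreducibility of the germ $\ceros(\gtp_j^\R)$ would force it inside some $\ceros(\gtp_k^\R)$ with $k\neq j$, and passing back to the complex side via the underlying point sets would give $\ceros(\gtp_j)\subseteq\ceros(\gtp_k)$ and hence $\gtp_k\subseteq\gtp_j$, contradicting the irredundancy of $\gta=\bigcap_i\gtp_i$ in $\an_{\C^n,x}$. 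The main delicate point is precisely the transfer of the Nullstellensatz identity $\gtP=\Jhaz(\ceros(\gtP))$ from the prime case handled in Theorem \ref{primality1} to an arbitrary real prime above $\gta^\R$; one needs to ensure that irreducibility of the $C$-analytic components $\ceros(\gtp_i^\R)$ (inherited from the irreducibility of the complex germs $\ceros(\gtp_i)$) allows the decomposition of $\ceros(\gtQ)$ to collapse into a single component.
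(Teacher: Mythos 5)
Your argument is correct and follows essentially the same route as the paper: identify $\ceros(\gta^\R)=\bigcup_i\ceros(\gtp_i^\R)$ as point-set germs and combine the real Nullstellensatz with the facts from Theorem \ref{primality1} that each $\gtp_i^\R$ is a real prime with $\Jhaz(\ceros(\gtp_i^\R))=\gtp_i^\R$; your detour through arbitrary real primes $\gtQ\supseteq\gta^\R$ just re-derives one direction of the Nullstellensatz identity $\sqrt[r]{\gta^\R}=\Jhaz(\ceros(\gta^\R))$ that the paper invokes directly. Your explicit verification of irredundancy (which the paper leaves implicit) is a sound addition.
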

\begin{proof}
As $\ceros(\gta)=\bigcup_{i=1}^r\ceros(\gtp_i)$, we deduce $\ceros(\gta^\R)=\bigcup_{i=1}^r\ceros(\gtp_i^\R)$. By the real Nullstellensatz and Theorem \ref{primality1}
$$
\sqrt[r]{\gta^\R}=\Jhaz(\ceros(\gta^\R))=\bigcap_{i=1}^r\Jhaz(\ceros(\gtp_i^\R))=\bigcap_{i=1}^r\gtp_i^\R.
$$
In addition, $\sqrt[r]{\gta^\R}=\gtp_1^\R\cap\cdots\cap\gtp_r^\R$ is the primary decomposition of $\sqrt[r]{\gta^\R}$.
\end{proof}

\section{Real underlying structure of the normalization}\label{s5}

In this section we prove Theorem \ref{neighnorm0}. Before that we devise some local properties of the real underlying structure of the normalization of a complex analytic space.

\subsection{Local properties of the real underlying structure of the normalization}
Fix a reduced complex analytic space $(X,\an_X)$ and let $(X^\nu,\an_{X^\nu},\pi)$ be its normalization. We analyze in this section the real underlying structure of the normalization $(X^\nu,\an_{X^\nu},\pi)$ of a Stein space $(X,\an_X)$. We will use freely the facts collected in the following lemma. 
\begin{lem}
Let $A$ be an excellent $\R$-algebra and let $\ol{A}^\nu$ be the integral closure of $A$ in its total ring of fraction $Q(A)$. Then $\ol{A}^\nu\otimes_\R\C$ is the integral closure of $A\otimes_\R\C$ in its total ring of fractions $Q(A)\otimes_\R\C$. In addition, if $A\otimes_\R\C$ is a normal ring, then $A$ is a normal ring too. 
\end{lem}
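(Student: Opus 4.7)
The plan is to proceed in three steps: first identify the total ring of fractions of $A\otimes_\R\C$, then derive the identification of integral closures, and finally establish the descent of normality via a conjugation argument. I would begin by checking that $Q(A\otimes_\R\C)=Q(A)\otimes_\R\C$. Since $\R\hookrightarrow\C$ is flat, every non-zero-divisor of $A$ remains a non-zero-divisor in $A\otimes_\R\C$, which yields an embedding $Q(A)\otimes_\R\C\hookrightarrow Q(A\otimes_\R\C)$. Conversely, writing an arbitrary element of $A\otimes_\R\C$ as $a+b\sqrt{-1}$ with $a,b\in A$, the identity $(a+b\sqrt{-1})(a-b\sqrt{-1})=a^2+b^2\in A$, together with the fact that the complex conjugation $\sigma$ is a ring automorphism of $A\otimes_\R\C$, shows that $a+b\sqrt{-1}$ is a non-zero-divisor if and only if $a^2+b^2$ is one in $A$; in that case the inverse $(a-b\sqrt{-1})/(a^2+b^2)$ already lies in $Q(A)\otimes_\R\C$. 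Hence every non-zero-divisor of $A\otimes_\R\C$ is invertible in $Q(A)\otimes_\R\C$, giving the desired equality.

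For the first assertion, I would use that $A$ excellent implies (via the Nagata property) that $\ol{A}^\nu$ is a finite $A$-module; consequently $\ol{A}^\nu\otimes_\R\C$ is a finite, and a fortiori integral, $A\otimes_\R\C$-subalgebra of $Q(A)\otimes_\R\C=Q(A\otimes_\R\C)$. It remains to check that $\ol{A}^\nu\otimes_\R\C$ is itself normal. Since $\R\hookrightarrow\C$ is \'etale, so is the base change $\ol{A}^\nu\hookrightarrow\ol{A}^\nu\otimes_\R\C$; as $\ol{A}^\nu$ is a finite product of normal integral domains (one factor $\ol{A/\gtp_i}^\nu$ per minimal prime $\gtp_i$ of $A$), and \'etale base change preserves Serre's conditions $(R_1)$ and $(S_2)$, we obtain that $\ol{A}^\nu\otimes_\R\C$ is normal. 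Combining, $\ol{A}^\nu\otimes_\R\C$ is the integral closure of $A\otimes_\R\C$ in $Q(A\otimes_\R\C)$.

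For the second assertion, assume $A\otimes_\R\C$ is normal and let $\sigma$ again denote its complex conjugation, whose fixed subring is $A$ (recall $A\hookrightarrow A\otimes_\R\C$ by faithful flatness). Given $x\in Q(A)$ integral over $A$, it is a fortiori integral over $A\otimes_\R\C$, so normality forces $x\in A\otimes_\R\C$. Writing $x=a+b\sqrt{-1}$ with $a,b\in A$, the invariance $\sigma(x)=x$ (because $x\in Q(A)$) gives $a-b\sqrt{-1}=a+b\sqrt{-1}$, whence $b=0$ and $x=a\in A$; reducedness of $A$ follows from the injection into a normal (hence reduced) ring, so $A$ is normal. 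The only substantive input throughout is the preservation of normality under \'etale base change, together with the finiteness of $\ol{A}^\nu$ over $A$ coming from excellence; everything else is formal manipulation of total rings of fractions and of the conjugation.
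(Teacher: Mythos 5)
Your proof is correct and follows essentially the same route as the paper: the first assertion is exactly the content of the references the paper cites (finiteness of $\ol{A}^\nu$ over the excellent ring $A$, plus ascent of normality along the \'etale base change $\R\hookrightarrow\C$, i.e.\ \cite[Prop.\ 6.14.2]{gr} or \cite[Thm.\ 19.4.3]{hs}), and your conjugation argument is precisely the ``straightforward exercise'' the paper leaves to the reader after writing $A\otimes_\R\C\cong A[\t]/(\t^2+1)$. Your explicit verification that $Q(A\otimes_\R\C)=Q(A)\otimes_\R\C$ is a useful detail the paper takes for granted.
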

\begin{proof}
The first part of the statement follows from \cite[Prop. 19.1.1 \& Thm. 19.4.3]{hs} or \cite[Prop. 6.14.2]{gr}. For the second part observe first that if $A$ is a $\C$-algebra, then $A\otimes_\R\C\cong A$ and $Q(A)\otimes_\R\C\cong Q(A)$. Otherwise $\sqrt{-1}\not\in A$, $\sqrt{-1}\not\in Q(A)$ and the polynomial $\t^2+1$ is irreducible both in $A[\t]$ and in $Q(A)[\t]$. It holds $A\otimes_\R\C\cong A[\t]/(\t^2+1)$ and $Q(A)\otimes_\R\C\cong Q(A)[\t]/(\t^2+1)$. Using these facts a straightforward exercise shows the remaining part. 
\end{proof}

Our main result of local nature in this section is the following.
\begin{thm}\label{localnormalization}
Let $x\in X$ and write $\pi^{-1}(x):=\{y_1,\ldots,y_r\}$. Let $\gtp_1/\gta,\ldots,\gtp_s/\gta$ be the minimal prime ideals of $\an_{X,x}=\an_{\C^n,x}/\gta$. Then $r=s$ and
\begin{itemize}
\item[(i)] The minimal prime ideals of $(\an_{X,x}^\R)^{rr}:=\an_{\R^{2n},x}/\sqrt[r]{\gta^\R}$ are $\gtp_1^\R/\sqrt[r]{\gta^\R},\ldots,\gtp_r^\R/\sqrt[r]{\gta^\R}$.
\item[(ii)] $\an_{X^\nu,y_i}^\R$ is after reordering the indices the normalization of $\an_{\R^{2n},x}/\gtp_i^\R$ for $i=1,\ldots,r$.
\item[(iii)] The normalization of the reduced ring $(\an_{X,x}^\R)^{rr}$ is $\an_{X^\nu,y_1}^\R\times\cdots\times\an_{X^\nu,y_r}^\R$. 
\end{itemize}
\end{thm}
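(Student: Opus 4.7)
First, Theorem \ref{normalization0}(i) gives $r=s$ and (after reordering) $\an_{X^\nu,y_i}\cong\ol{\an_{\C^n,x}/\gtp_i}^\nu$. Part (i) is then essentially Corollary \ref{rradical}: each $\gtp_i^\R$ is prime by Theorem \ref{primality1}, and the decomposition $\sqrt[r]{\gta^\R}=\gtp_1^\R\cap\cdots\cap\gtp_r^\R$ is irredundant because distinct $\gtp_i$ give distinct $\gtp_i^\R$, as follows from $\gtp_i^\R\otimes_\R\C=(\gtp_i\ast\ol{\gtp_i})^e$ and the contraction formula $\gtp_i^\R\cap\an_{\C^n,x}=\gtp_i$ provided by Lemma \ref{cuts}. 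Hence the minimal primes of $(\an_{X,x}^\R)^{rr}=\an_{\R^{2n},x}/\sqrt[r]{\gta^\R}$ are exactly the $\gtp_i^\R/\sqrt[r]{\gta^\R}$.

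For Part (ii), fix $i$ and write $\an_{X^\nu,y_i}\cong\an_{\C^m,z_i}/\gtP_i$, where $\an_{\C^m,z_i}/\gtP_i$ is the complex analytic normalization of $\an_{\C^n,x}/\gtp_i$; in particular it is a finitely generated $\an_{\C^n,x}/\gtp_i$-module. Set $A_i:=\an_{\R^{2n},x}/\gtp_i^\R$ and $B_i:=\an_{\R^{2m},z_i}/\gtP_i^\R=\an_{X^\nu,y_i}^\R$. I will check that $B_i$ is the integral closure of $A_i$ in its total ring of fractions by verifying: (a) $B_i$ is a normal ring; (b) the natural homomorphism $A_i\hookrightarrow B_i$ is integral; (c) $A_i$ and $B_i$ share the same total ring of fractions. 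For (b), a finite system of generators of $\an_{\C^m,z_i}/\gtP_i$ over $\an_{\C^n,x}/\gtp_i$ satisfies monic equations with holomorphic coefficients, and applying $\Re^*$ and $\Im^*$ produces monic equations over $A_i$ (a routine calculation). For (a) and (c) the plan is to pass to complexifications via the lemma stated at the beginning of Section \ref{s5}: $A_i\otimes_\R\C\cong(\an_{\R^{2n},x}\otimes_\R\C)/(\gtp_i\ast\ol{\gtp_i})^e$ and similarly for $B_i\otimes_\R\C$, and Lemma \ref{primality0}(ii)--(iv) identifies $B_i\otimes_\R\C$, after localization at the maximal ideal over $x$, with the normalization of $A_i\otimes_\R\C$. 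Combining this with Corollary \ref{normal}(ii) (applicable since $\an_{\C^m,z_i}/\gtP_i$ is normal because $X^\nu$ is normal at $y_i$) yields normality of $B_i\otimes_\R\C$, which descends to $B_i$ by the second half of the Section \ref{s5} lemma; the equality of total rings of fractions descends from $\C$ to $\R$ by the faithful flatness of $\R\hookrightarrow\C$.

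Part (iii) then follows by combining (i) and (ii) with the standard splitting of normalization for reduced rings: if $R$ is reduced with minimal primes $\gtq_1,\ldots,\gtq_r$, then $\ol{R}^\nu\cong\prod_i\ol{R/\gtq_i}^\nu$. Applied to $R=(\an_{X,x}^\R)^{rr}$ with the $\gtq_i=\gtp_i^\R/\sqrt[r]{\gta^\R}$ from (i), the factors are by (ii) the rings $\an_{X^\nu,y_i}^\R$, giving the desired product decomposition. The main obstacle is step (c) in Part (ii): Lemma \ref{primality0} treats the smaller subring $(\an_{\C^n,x}\ol{\an}_{\C^n,x})/(\gtp_i\ast\ol{\gtp_i})$ inside $(\an_{\R^{2n},x}\otimes_\R\C)/(\gtp_i\ast\ol{\gtp_i})^e$, while for the complexified normalization statement I need to work with the latter ring itself; the bridge will be the observation, already used in the proof of Lemma \ref{primality0}(iv), that both rings have the same completion at the maximal ideal over $x$, combined with the excellence of analytic local rings.
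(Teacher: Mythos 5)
Your proposal is correct and follows essentially the same route as the paper: part (i) is Corollary \ref{rradical}, part (iii) is the splitting of normalization, and the substance of part (ii) is exactly what the paper isolates as Lemma \ref{normal76} (with Lemma \ref{normal88}), proved there by the same completion/excellence bridge between $(\an_{\C^n,x}\ol{\an}_{\C^n,x})/(\gtp_i\ast\ol{\gtp_i})$ and the full ring $\an_{X,x}^\R\otimes_\R\C$ that you correctly single out as the main obstacle. The one caveat is that your step (b) is not a ``routine calculation'' with $\Re^*$ and $\Im^*$, since $B_i$ strictly contains the subalgebra generated by holomorphic and anti-holomorphic germs; but this does not matter, because the completion argument you invoke for (c) establishes (ii) outright and renders (b) redundant.
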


We approach first the case in which $\an_{X,x}$ is a normal integral domain.

\begin{lem}\label{normal88}
Let $x\in X$ be such that the ring $\an_{X,x}$ is a normal integral domain. Then the ring $\an_{X,x}^\R$ is a normal integral domain and $\widehat{\an_{X,x}^\R\otimes_\R\C}\cong\widehat{(\an_{X,x}\ol{\an}_{X,x})_{\gtm_x'}}$ where $\gtm_x'$ is the maximal ideal of $\an_{X,x}\ol{\an}_{X,x}$ associated with $x$.
\end{lem}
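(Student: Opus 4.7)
The plan is to reduce the proof to a single ideal identification, from which both halves of the lemma follow by direct appeal to results already established. Starting from a local embedding, we may write $\an_{X,x}=\an_{\C^n,x}/\gtp$ for a prime ideal $\gtp$ (such exists since $\an_{X,x}$ is an integral domain), so that by construction of the real underlying structure $\an_{X,x}^\R=\an_{\R^{2n},x}/\gtp^\R$. Theorem~\ref{primality1} already ensures that $\gtp^\R$ is a real prime ideal, so $\an_{X,x}^\R$ is automatically an integral domain; the substance of the lemma is the normality of $\an_{X,x}^\R$ and the asserted isomorphism of completions.

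The key point I would verify is the equality of ideals
\[
(\gtp^\R)(\an_{\R^{2n},x}\otimes_\R\C)=(\gtp\ast\ol{\gtp})^e
\]
in $\an_{\R^{2n},x}\otimes_\R\C$. The inclusion $\subseteq$ is immediate from the definition of $\gtp^\R$. For $\supseteq$, I would exploit that $(\gtp\ast\ol{\gtp})^e$ is conjugation-invariant: any $c=a+\sqrt{-1}\,b$ in it, with $a,b\in\an_{\R^{2n},x}$, satisfies $2a=c+\ol c$ and $2\sqrt{-1}\,b=c-\ol c$, both of which then lie in $(\gtp\ast\ol{\gtp})^e\cap\an_{\R^{2n},x}=\gtp^\R$; concretely, the point is that $\Re^*(F),\Im^*(F)\in\gtp^\R$ for every $F\in\gtp$. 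Granted this identification, flatness of $\R\hookrightarrow\C$ yields
\[
\an_{X,x}^\R\otimes_\R\C\cong(\an_{\R^{2n},x}\otimes_\R\C)/(\gtp\ast\ol{\gtp})^e,
\]
and the completion computation already carried out in the proof of Lemma~\ref{primality0} (via \cite[17.9]{na}) shows that this ring completes at its maximal ideal to $\C[[\x,\y]]/((\gtp\ast\ol{\gtp})^e\C[[\x,\y]])$; an identical application of \cite[17.9]{na} shows that $\widehat{(\an_{X,x}\ol{\an}_{X,x})_{\gtm_x'}}$ admits exactly the same description, which will prove the second assertion.

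Finally, Corollary~\ref{normal0}(ii), combined with the hypothesis that $\an_{X,x}$ is normal, makes $\an_{X,x}\ol{\an}_{X,x}$ and hence its localisation $(\an_{X,x}\ol{\an}_{X,x})_{\gtm_x'}$ a normal integral domain; \cite[VII.2.2(d)]{abr} promotes this to normality of its completion, and the isomorphism of completions transports normality to $\widehat{\an_{X,x}^\R\otimes_\R\C}$. A further application of \cite[VII.2.2(d)]{abr} descends normality back to the excellent local ring $\an_{X,x}^\R\otimes_\R\C$, and the preliminary lemma opening Section~\ref{s5} then transfers it across the extension $\an_{X,x}^\R\hookrightarrow\an_{X,x}^\R\otimes_\R\C$ to $\an_{X,x}^\R$ itself. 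The only non-routine step I anticipate is the ideal identification in the second paragraph; once that equality is in hand, everything else is a direct chaining of Theorem~\ref{primality1}, the completion computation of Lemma~\ref{primality0}, Corollary~\ref{normal0}, and the preliminary lemma of Section~\ref{s5}.
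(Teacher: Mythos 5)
Your route is essentially the paper's: identify $\an_{X,x}^\R\otimes_\R\C$ with $(\an_{\R^{2n},x}\otimes_\R\C)/(\gtp\ast\ol{\gtp})^e$, show via \cite[17.9]{na} that this ring and $(\an_{X,x}\ol{\an}_{X,x})_{\gtm_x'}$ have the common completion $\C[[\x,\y]]/((\gtp\ast\ol{\gtp})^e\C[[\x,\y]])$, obtain normality of $(\an_{X,x}\ol{\an}_{X,x})_{\gtm_x'}$ from Corollary \ref{normal0}(ii), push it up to the completion, pull it down to $\an_{X,x}^\R\otimes_\R\C$, and finally descend to $\an_{X,x}^\R$ by the preliminary lemma of Section \ref{s5}. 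Your explicit verification of $\gtp^\R(\an_{\R^{2n},x}\otimes_\R\C)=(\gtp\ast\ol{\gtp})^e$ via conjugation-invariance is correct, and indeed this identity is used tacitly in the paper both here and in the proof of Theorem \ref{primality1}.

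The one step you have not justified is the first, ascending application of \cite[VII.2.2(d)]{abr}: to pass from ``$B:=(\an_{X,x}\ol{\an}_{X,x})_{\gtm_x'}$ is normal'' to ``$\widehat{B}$ is normal'' you need $B$ to be excellent (or at least analytically normal), and this is not automatic — a normal Noetherian local domain can have a non-normal completion. Here $B$ is a localisation of a quotient of $\an_{\C^n,x}\ol{\an}_{\C^n,x}$, which is only the subring of finite sums $\sum F_i\ol{G_i}$ and hence a proper, non-standard subring of $\an_{X,x}^\R\otimes_\R\C\cong\an_{\C^{2n},x}$; its excellence is not one of the classical facts about local analytic algebras. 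The paper devotes a paragraph to exactly this point: it shows that $A:=(\an_{\C^n,x}\ol{\an}_{\C^n,x})_{\gtm_x}$ is a regular local ring with completion $\C[[\x,\y]]$ and carries the $2n$ derivations $\partial/\partial\x_i$, $\partial/\partial\y_i$ dual to the coordinate functions, so that \cite[40.F, Thm.~102, p.~291]{m} yields excellence of $A$, and then of $B\cong A/((\gtp\ast\ol{\gtp})A)$ as a quotient. With this supplement your argument closes; the second, descending application of \cite[VII.2.2(d)]{abr} to $C:=\an_{X,x}^\R\otimes_\R\C$ is unproblematic, since $C$ is a quotient of the excellent local analytic ring $\an_{\C^{2n},x}$.
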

\begin{proof}
Before proving \em $\an_{X,x}^\R$ is a normal ring \em we need some initial preparation. Write $\an_{X,x}\cong\an_{\C^n,x}/\gtp$ where $\gtp$ is a prime ideal of $\an_{\C^n,x}$. It holds by Lemma \ref{primality0} that $\an_{X,x}^\R\cong\an_{\R^{2n},x}/\gtp^\R$ is an integral domain. Let $\gtm_{1,x}$ be the maximal ideal of $\an_{\C_n,x}$ associated with $x$. Then $\gtm_x:=\gtm_{1,x}\ol{\an}_{\C^n,x}+\an_{\C^n,x}\ol{\gtm}_{1,x}$ is the maximal ideal of $\an_{\C^n,x}\ol{\an}_{\C^n,x}$ associated with $x$. Denote $A:=(\an_{\C^n,x}\ol{\an}_{\C^n,x})_{\gtm_x}$ and $B:=(\an_{X,x}\ol{\an}_{X,x})_{\gtm_x'}$ where $\gtm_x'\cong\gtm_x/(\gtp\ast\ol{\gtp})$ is the maximal ideal of $\an_{X,x}\ol{\an}_{X,x}$ associated with $x$. Let us prove: \em $A$ is an excellent ring\em. Once this is proved we deduce by \cite[VII.2.2(b)]{abr} that also $B\cong A/((\gtp\ast\ol{\gtp})A)$ is an excellent ring.

We prove first that $A$ is a local regular ring. By \cite[24.D]{m}, it is enough to show that its completion is regular. The completion of $A$ is $\widehat{A}=\C[[\z,\ol{\z}]]=\C[[\x,\y]]$ where $\z:=(\z_1,\ldots,\z_n)$, $\z_i:=\x_i+\sqrt{-1}\y_i$, $\x:=(\x_1,\ldots,\x_n)$ and $\y:=(\y_1,\ldots,\y_n)$. Consequently, both $A$ and $\widehat{A}$ are local regular rings. 

In addition, the height of the maximal ideal $\gtm_A$ of $A$ is $2n$ by Theorem \ref{primality1}. Observe that $A/\gtm_A\cong\C$ and $\C\hookrightarrow A$. We have in $A$ derivations and elements
$$
D_i:=\begin{cases}
\frac{\partial}{\partial\x_i}&\text{if $i=1,\ldots,n$,}\\
\frac{\partial}{\partial\y_{i-n}}&\text{if $i=n+1,\ldots,2n$}
\end{cases}
\qquad
\xi_i:=\begin{cases}
\x_i&\text{if $i=1,\ldots,n$,}\\
\y_{i-n}&\text{if $i=n+1,\ldots,2n$,}
\end{cases}
$$
such that $D_i\xi_j=\delta_{ij}$ for $i,j=1,\ldots,2n$. By \cite[40.F, Thm. 102, pag. 291]{m} $A$ is an excellent local ring.

We are ready to prove: \em $\an_{X,x}^\R$ is a normal ring\em. By \cite[VII.2.2(d)]{abr}
it is enough to show: \em the completion $\widehat{C}$ of the excellent local ring $C:=\an_{X,x}^\R\otimes_\R\C\cong(\an_{\R^{2n},x}\otimes_\R\C)/(\gtp^\R\otimes_\R\C)$ is normal\em.

Assume without loss of generality $x=0$. The completion of $\an_{\R^{2n},x}\otimes_\R\C$ is $\C[[\x,\y]]$. By \cite[17.9]{na} we have 
\begin{align}
&\widehat{C}=\C[[\x,\y]]/(\gtp^\R\otimes_\R\C)\C[[\x,\y]]=\C[[\x,\y]]/(\gtp\cup\ol{\gtp})\C[[\x,\y]],\label{A1}\\
&\widehat{B}=\widehat{A}/((\gtp\ast\ol{\gtp})\widehat{A})=\C[[\x,\y]]/(\gtp\cup\ol{\gtp})\C[[\x,\y]].\label{A2}
\end{align}
By Lemma \ref{primality0} $B$ is a normal ring. As $B$ is in addition excellent, $\widehat{C}\cong\widehat{B}$ is normal by \cite[VII.2.2(d)]{abr}, as required.
\end{proof}
\begin{remark}
A similar proof shows that if $\an_{X,x}$ is a regular ring, then $\an_{X,x}^\R$ is a regular ring.
\end{remark}

We study now what happens when $\an_{X,x}$ is an integral domain.

\begin{lem}\label{normal76}
Let $x\in X$ be such that $\an_{X,x}$ is an integral domain and write $\pi^{-1}(x)=\{y\}$. Then $\an_{X,x}^\R$ is an integral domain and $\an_{X^\nu,y}^\R$ is the integral closure of $\an_{X,x}^\R$ in its field of fractions.
\end{lem}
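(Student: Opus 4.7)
The first assertion is immediate from the machinery of Section \ref{s4}. I would write $\an_{X,x}\cong\an_{\C^n,x}/\gtp$ for a prime ideal $\gtp$ (possible since $\an_{X,x}$ is an integral domain), so that under the construction recalled in \ref{uras} one has $\an_{X,x}^\R\cong\an_{\R^{2n},x}/\gtp^\R$. Theorem \ref{primality1} tells us that $\gtp^\R$ is a real prime ideal of $\an_{\R^{2n},x}$; hence $\an_{X,x}^\R$ is an integral domain.

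For the second assertion, I plan to verify three properties of $\an_{X^\nu,y}^\R$, viewed as an extension of $\an_{X,x}^\R$ via the pullback $\pi^{\R*}$: namely, (a) it is a normal integral domain; (b) it is finitely generated, hence integral, as an $\an_{X,x}^\R$-module; and (c) its field of fractions coincides with $\qf(\an_{X,x}^\R)$ under $\pi^{\R*}$. For (a), the ring $\an_{X^\nu,y}$ is a normal integral domain because $X^\nu$ is a normal complex analytic space, so Lemma \ref{normal88} applied at $y\in X^\nu$ yields (a). For (b), by Theorem \ref{normalization0}(i) the hypothesis $\pi^{-1}(x)=\{y\}$ forces $\an_{X^\nu,y}\cong\ol{\an_{X,x}}^\nu$, which is a module-finite extension of the excellent local ring $\an_{X,x}$; tensoring this finite extension by $\ol{\an}_{X,x}$ and then by $\ol{\an}_{X^\nu,y}$, and invoking Corollary \ref{tensor0} (the germs $X_x$ and $X^\nu_y$ are irreducible) to identify $\an_{X,x}\ol{\an}_{X,x}\cong\an_{X,x}^\R\otimes_\R\C$ and $\an_{X^\nu,y}\ol{\an}_{X^\nu,y}\cong\an_{X^\nu,y}^\R\otimes_\R\C$, I would obtain that $\an_{X^\nu,y}^\R\otimes_\R\C$ is a finite $\an_{X,x}^\R\otimes_\R\C$-module. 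Faithful flatness of $\R\hookrightarrow\C$ then descends finite generation to the real rings, giving (b).

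Step (c) will be the main obstacle. The route I plan to take exploits that, by Theorem \ref{normalization0}, the map $\pi$ is biholomorphic away from $\Sing(X)$, and $\Sing(X)_x$ has real codimension at least $2$ in $X^\R$ at $x$; consequently $\pi^\R$ restricts to a real-analytic isomorphism on a dense open neighborhood of $y$ in $X^{\nu,\R}$. This forces the generic rank of the module-finite extension obtained in (b) to equal $1$, so the induced homomorphism $\pi^{\R*}\colon\qf(\an_{X,x}^\R)\to\qf(\an_{X^\nu,y}^\R)$ is an isomorphism of fields, yielding (c). An alternative, more algebraic, packaging would use Lemma \ref{primality0}(iii): the localization $(\an_{X^\nu,y}\ol{\an}_{X^\nu,y})_{\gtn_y'}$ is the normalization of $(\an_{X,x}\ol{\an}_{X,x})_{\gtm_x'}$, so these rings share a common total ring of fractions, and faithful flatness of complexification transports this equality back to the real fraction fields.

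Finally, combining (a), (b), (c), the pullback $\pi^{\R*}$ identifies $\an_{X^\nu,y}^\R$ with a subring of $\qf(\an_{X,x}^\R)$ that is integral over $\an_{X,x}^\R$ and itself normal; this characterizes it as the integral closure of $\an_{X,x}^\R$ in its field of fractions, as required. The delicate point throughout is ensuring that the field extension $\qf(\an_{X^\nu,y}^\R)/\qf(\an_{X,x}^\R)$ is trivial despite the passage from complex to real structures, which is where a careful application of the generic rank argument or of Lemma \ref{primality0}(iii) is indispensable.
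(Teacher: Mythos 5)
Your handling of the first assertion (via $\an_{X,x}^\R\cong\an_{\R^{2n},x}/\gtp^\R$ and Theorem \ref{primality1}) is correct and matches the paper, and your overall template for the second assertion --- normality, integrality, equality of fraction fields --- is the right shape, with step (a) correctly disposed of by Lemma \ref{normal88} at $y$. The problem is that both (b) and (c) rest on the identification ``$\an_{X,x}\ol{\an}_{X,x}\cong\an_{X,x}^\R\otimes_\R\C$'', and this is false. Corollary \ref{tensor0} identifies $\an_{X,x}\ol{\an}_{X,x}$ with $\an_{X,x}\otimes_\C\ol{\an}_{X,x}$, not with $\an_{X,x}^\R\otimes_\R\C$: the former is the subring of the latter consisting of \emph{finite} sums $\sum F_i\ol{G_i}$, and the inclusion is strict in general (already for $X=\C$ a convergent series in $z,\ol{z}$ need not be a finite sum of products of a holomorphic and an anti-holomorphic germ). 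So tensoring the finite extension $\an_{X,x}\subset\an_{X^\nu,y}$ by the conjugate rings only shows that $\an_{X^\nu,y}\ol{\an}_{X^\nu,y}$ is finite over $\an_{X,x}\ol{\an}_{X,x}$; it gives no control over the genuinely larger rings $B:=\an_{X,x}^\R\otimes_\R\C\subset C:=\an_{X^\nu,y}^\R\otimes_\R\C$, and in particular does not show that $C$ is integral over $B$, nor that $\qf(C)=\qf(B)$. The same objection defeats your alternative route for (c): Lemma \ref{primality0}(iii) lives at the level of the localized rings $(\an\ol{\an})_{\gtm_x'}$, and the passage from these to $B$ and $C$ is not a faithfully flat base change over $\R$, so there is nothing to ``transport''.

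The missing ingredient is the completion argument that occupies Steps 2--4 of the paper's proof. One sets $A:=(\an_{X,x}\ol{\an}_{X,x})_{\gtm_x'}$ and observes that $A$ and $B$ are \emph{distinct} excellent local rings with the \emph{same} completion $\C[[\x,\y]]/((\gtp\cup\ol{\gtp})\C[[\x,\y]])$; one proves that $\ol{A}^\nu\cong(\an_{X^\nu,y}\ol{\an}_{X^\nu,y})_{\gtm_x'}$ is a local ring (this is where the going-up analysis of Lemma \ref{primality0} enters), deduces $\widehat{C}\cong\widehat{\ol{A}^\nu}\cong\ol{\widehat{A}}^\nu\cong\ol{\widehat{B}}^\nu$, hence $C\subset\widehat{\ol{B}^\nu}$, and finally descends the sandwich $\ol{B}^\nu\subset C\subset\widehat{\ol{B}^\nu}$ to the equality $\ol{B}^\nu=C$ by faithful flatness of $B\to\widehat{B}$. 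Integrality of $C$ over $B$ and the triviality of the fraction-field extension are obtained simultaneously as consequences of this equality; neither can be extracted from Corollary \ref{tensor0} alone, and your generic-rank argument for (c) cannot even be started until the finiteness in (b) is secured.
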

\begin{proof}
The proof is conducted in several steps:

\noindent{\bf Step 1.} The ring $\an_{{X^\nu},y}$ is the integral closure of $\an_{X,x}$ in its field of fractions $K$. The $\C$-monomorphism $\pi^*:\an_{X,x}\hookrightarrow\an_{{X^\nu},y}$ induces an $\R$-monomorphism $\an_{X,x}^\R\hookrightarrow\an_{{X^\nu},y}^\R$ (see \cite[II.4.1]{gmt}) and this one a $\C$-monomorphism 
$$
B:=\an_{X,x}^\R\otimes_\R\C\hookrightarrow C:=\an_{{X^\nu},y}^\R\otimes_\R\C.
$$
The fields of fractions of $B$ is contained in the one of $C$. As $C$ is by Lemma \ref{normal88} a normal integral domain, the integral closure $\ol{B}^\nu$ of $B$ in its field of fractions is contained in $C$. We claim: $\ol{B}^\nu=C$. To prove this we assume proved for a while that $C\subset\widehat{\ol{B}^\nu}$.

We have the chain of inclusions $\ol{B}^\nu\hookrightarrow C\hookrightarrow\widehat{\ol{B}^\nu}$. As $B$ is an excellent local ring, the homomorphism $B\to\widehat{B}$ is faithfully flat. If we tensor $\ol{B}^\nu\hookrightarrow C\hookrightarrow\widehat{\ol{B}^\nu}$ by $-\otimes_B\widehat{B}$, injectivity is preserved, that is,
$$
\ol{B}^\nu\otimes_B\widehat{B}\hookrightarrow C\otimes_B\widehat{B}\hookrightarrow\widehat{\ol{B}^\nu}\otimes_B\widehat{B}.
$$
By \cite[23.K, Thm.55, p.170]{m} $\widehat{\ol{B}^\nu}\cong\ol{B}^\nu\otimes_B\widehat{B}$ (because $\ol{B}^\nu$ is a finite $B$-module). Thus,
$$
\widehat{\ol{B}^\nu}\otimes_B\widehat{B}\cong(\ol{B}^\nu\otimes_B\widehat{B})\otimes_B\widehat{B}\cong \ol{B}^\nu\otimes_B\widehat{B}\cong\widehat{\ol{B}^\nu}.
$$
Consequently, 
$$
\widehat{\ol{B}^\nu}\cong \ol{B}^\nu\otimes_B\widehat{B}\hookrightarrow C\otimes_B\widehat{B}\hookrightarrow\widehat{\ol{B}^\nu}\otimes_B\widehat{B}\cong\widehat{\ol{B}^\nu},
$$
so $\ol{B}^\nu\otimes_B\widehat{B}\cong C\otimes_B\widehat{B}$. As the homomorphism $B\to\widehat{B}$ is faithfully flat, we conclude $\ol{B}^\nu=C$.

\noindent{\bf Step 2.} We are reduced to prove: $C\subset\widehat{\ol{B}^\nu}$. As the homomorphism $B\to\widehat{B}$ is regular, we deduce by \cite[VII.2.6]{abr} that $\ol{\widehat{B}}^\nu=\ol{B}^\nu\otimes_B\widehat{B}\cong\widehat{\ol{B}^\nu}$, so we will show: $C\subset\ol{\widehat{B}}^\nu$. As $C\hookrightarrow\widehat{C}$, it is enough to prove: $\widehat{C}\cong\ol{\widehat{B}}^\nu$. 

Define $A:=(\an_{X,x}\ol{\an}_{X,x})_{\gtm_x'}\subset B$ where $\gtm_x'$ is the maximal ideal of $\an_{X,x}\ol{\an}_{X,x}$ associated with $x$. We will prove in two steps: $\widehat{C}\cong\widehat{\ol{A}^\nu}\cong\ol{\widehat{A}}^\nu\cong\ol{\widehat{B}}^\nu$.

\noindent{\bf Step 3.} Let us prove: \em $A$ is a local integral domain with field of fractions $L$ and the integral closure $\ol{A}^\nu\cong(\an_{{X^\nu},y}\ol{\an}_{{X^\nu},y})_{\gtm_x'}$ of $A$ in $L$ is a local integral domain\em.

By Corollary \ref{normal0} $\an_{X,x}\ol{\an}_{X,x}$ is an integral domain whereas $\an_{{X^\nu},y}\ol{\an}_{{X^\nu},y}$ is a normal integral domain. Let $K$ be the total ring of fractions of $\an_{X,x}$ and $\ol{K}$ the field of fractions of $\ol{\an}_{X,x}$. The ring $\an_{{X^\nu},y}$ is the integral closure of $\an_{X,x}$ in $K$ whereas $\ol{\an}_{{X^\nu},y}$ is the integral closure of $\ol{\an}_{X,x}$ in $\ol{K}$. Let $L$ be the field of fractions of $\an_{X^\nu,x}\ol{\an}_{{X^\nu},y}$ and let $K\ol{K}$ be the smallest $\C$-subalgebra $K\ol{K}$ of $L$ that contains $K$ and $\ol{K}$. As $K\ol{K}$ is contained in the field of fractions of $\an_{X,x}\ol{\an}_{X,x}$ and $\an_{X^\nu,x}\ol{\an}_{{X^\nu},y}\subset K\ol{K}\subset L$, we deduce $L$ is the field of fractions of $\an_{X,x}\ol{\an}_{X,x}$. Thus,
$$
\xymatrix{
\an_{X,x}\ol{\an}_{X,x}\ar@{^{(}->}[r]&\an_{{X^\nu},y}\ol{\an}_{{X^\nu},y}\ar@{^{(}->}[r]&K\ol{K}\ar@{^{(}->}[r]&L=\qf(\an_{X,x}\ol{\an}_{X,x}).
}
$$
As the ring $\an_{{X^\nu},y}\ol{\an}_{{X^\nu},y}$ is normal, the integral closure $\ol{\an_{X,x}\ol{\an}_{X,x}}^\nu$ of $\an_{X,x}\ol{\an}_{X,x}$ is contained in $\an_{{X^\nu},y}\ol{\an}_{{X^\nu},y}$. As all the elements of $\an_{{X^\nu},y}$ are integral over $\an_{X,x}$, the elements of $\an_{{X^\nu},y}\cup\ol{\an}_{{X^\nu},y}$ are integral over $\an_{X,x}\ol{\an}_{X,x}$, so $\ol{\an_{X,x}\ol{\an}_{X,x}}^\nu=\an_{{X^\nu},y}\ol{\an}_{{X^\nu},y}$.

We know that $\gtm_x'=\gtm_x\ol{\an}_{X,x}+\an_{X,x}\ol{\gtm}_x$ where $\gtm_x$ is the maximal ideal of $\an_{X,x}$ associated with $x$. Let $\gtn_y'$ be the maximal ideal of $\an_{{X^\nu},y}\ol{\an}_{{X^\nu},y}$ associated to $y$. It holds $\gtn_y'=\gtn_y\ol{\an}_{{X^\nu},y}+\an_{{X^\nu},y}\ol{\gtn}_y$ where $\gtn_y$ is the maximal ideal of $\an_{{X^\nu},y}$ associated with $y$. We claim: \em $\gtn_y'$ is the unique prime ideal $\gtq$ of $\an_{{X^\nu},y}\ol{\an}_{{X^\nu},y}$ such that $\gtq\cap\an_{X,x}\ol{\an}_{X,x}=\gtm_x'$\em. 

Consider the commutative diagram
$$
\xymatrix{
\an_{X,x}\ar@{^{(}->}[r]\ar@{^{(}->}[d]&\an_{{X^\nu},y}\ar@{^{(}->}[r]\ar@{^{(}->}[d]&K\ar@{^{(}->}[d]\\
\an_{X,x}\ol{\an}_{X,x}\ar@{^{(}->}[r]&\an_{{X^\nu},y}\ol{\an}_{{X^\nu},y}\ar@{^{(}->}[r]&K\ol{K}\\
\ol{\an}_{X,x}\ar@{^{(}->}[r]\ar@{^{(}->}[u]&\ol{\an}_{{X^\nu},y}\ar@{^{(}->}[r]\ar@{^{(}->}[u]&\ol{K}\ar@{^{(}->}[u]\\
}
$$
By \cite[5.8]{am} $\gtn_y$ is the unique prime ideal of $\an_{{X^\nu},y}$ lying over $\gtm_x$, because $\an_{{X^\nu},y}$ is a local ring. Let $\gtq$ be a prime ideal of $\an_{{X^\nu},y}\ol{\an}_{{X^\nu},y}$ such that $\gtq\cap\an_{X,x}\ol{\an}_{X,x}=\gtm_x'$. We have
$$
\gtq\cap\an_{X,x}=\gtq\cap\an_{X,x}\ol{\an}_{X,x}\cap\an_{X,x}=\gtm_x'\cap\an_{X,x}=\gtm_x.
$$
The prime ideal $\gtq\cap\an_{{X^\nu},y}$ satisfies $(\gtq\cap\an_{{X^\nu},y})\cap\an_{X,x}=\gtq\cap\an_{X,x}=\gtm_x$, so $\gtq\cap\an_{{X^\nu},y}=\gtn_y$. Analogously $\gtq\cap\ol{\an}_{{X^\nu},y}=\ol{\gtn_y}$, so $\gtn_y'=\gtn_y\ol{\an}_{{X^\nu},y}+\an_{{X^\nu},y}\ol{\gtn}_y\subset\gtq$ and we conclude $\gtn_y'=\gtq$.

The integral closure of $A:=(\an_{X,x}\ol{\an}_{X,x})_{\gtm_x'}$ in $L$ is by \cite[5.12]{am} $\ol{A}^\nu\cong(\an_{{X^\nu},y}\ol{\an}_{{X^\nu},y})_{\gtm_x'}$. We claim: \em $\ol{A}^\nu$ is a local integral domain\em.

Let $\gtn'$ be a maximal ideal of $\ol{A}^\nu$. By \cite[5.8]{am} $\gtn'\cap A=\gtm_x'A$, so $\gtn'=\gtn_y'\ol{A}^\nu$ because $\gtn_y'$ is the unique prime ideal of $\an_{{X^\nu},y}\ol{\an}_{{X^\nu},y}$ lying over $\gtm_x'$. Thus, $\ol{A}^\nu$ is a local integral domain. 

\noindent{\bf Step 4.} We show next: $\widehat{C}\cong\widehat{\ol{A}^\nu}\cong\ol{\widehat{A}}^\nu\cong\ol{\widehat{B}}^\nu$. 

As $\ol{A}^\nu$ is a local ring, $\ol{\widehat{A}}^\nu$ is by \cite[VII.3.1]{abr} a local ring and $\widehat{A}$ is an integral domain. As $A\to\widehat{A}$ is a regular homomorphism and $\ol{A}^\nu$ is a finite $A$-module, $\ol{\widehat{A}}^\nu\cong \ol{A}^\nu\otimes_A\widehat{A}\cong\widehat{\ol{A}^\nu}$ (use \cite[VII.2.6]{abr} and \cite[23.K, Thm.55, p.170]{m}). Using \cite[17.9]{na} one shows that $\widehat{A}$ is the completion of $B$ (proceed as in the last part of the proof of Lemma \ref{normal88}). By Lemma \ref{normal88} $C$ is a normal local domain and $\widehat{C}\cong\widehat{\ol{A}^\nu}$. Consequently, $\widehat{C}\cong\widehat{\ol{A}^\nu}\cong\ol{\widehat{A}}^\nu\cong\ol{\widehat{B}}^\nu$, as required.
\end{proof}

We are ready to prove Theorem \ref{localnormalization}.

\begin{proof}[Proof of Theorem \em \ref{localnormalization}]
We have proved in Lemma \ref{rradical} that $\sqrt[r]{\gta^\R}=\bigcap_{i=1}^s\gtp_i^\R$ is the primary decomposition of the radical ideal $\sqrt[r]{\gta^\R}$. As 
$$
(\an_{X,x}^\R)^{rr}\cong(\an_{X,x}^\R/(\sqrt[r]{\gta^\R}/\gta))\cong\an_{\R^{2n},x}/\sqrt[r]{\gta^\R},
$$ 
we deduce that $\gtp_1^\R/\sqrt[r]{\gta^\R},\ldots,\gtp_s^\R/\sqrt[r]{\gta^\R}$ are the minimal prime ideals of $\an_{X,x}$, which proves (i). By \cite[1.5.20]{jp}
$$
\ol{(\an_{X,x}^\R)^{rr}}^\nu\cong\ol{\an_{\R^{2n},x}/\sqrt[r]{\gta^\R}}^\nu\cong\prod_{i=1}^s\ol{\an_{\R^{2n},x}/\gtp_i^\R}^\nu.
$$
Thus, we have only to check: \em $r=s$ and, after reordering the indices, $\ol{\an_{\R^{2n},x}/\gtp_i^\R}^\nu\cong\an_{X^\nu,y_i}^\R$\em.

By \cite[1.5.20]{jp} we know
$$
\ol{\an_{X,x}}^\nu\cong\ol{\an_{\C^n,x}/\gta}^\nu\cong\prod_{i=1}^s\ol{\an_{\C^n,x}/\gtp_i}^\nu.
$$
By \cite[4.4.8, 6.1.18, 6.3.7]{jp} we deduce $r=s$, $\ol{\an_{X,x}}^\nu\cong\an_{X^\nu,y_1}\times\cdots\times\an_{X^\nu,y_r}$ and after reordering the indices $\ol{\an_{\C^n,x}/\gtp_i}^\nu\cong\an_{X^\nu,y_i}$. By Lemma \ref{normal76} we conclude $\ol{\an_{\R^{2n},x}/\gtp_i^\R}^\nu\cong\an_{X^\nu,y_i}^\R$, which proves (ii) and consequently (iii).
\end{proof}

\subsection{Global properties of the real underlying structure of the normalization}
We are ready to prove Theorem \ref{neighnorm0}.

\begin{proof}[Proof of Theorem \em\ref{neighnorm0}]
We prove the following implications:

(i) $\iff$ (ii) follows from \cite[III.2.15]{gmt} or \cite[Prop.V.\S1.8]{n}.

(i) $\Longrightarrow$ (iii) As $(X^\nu,\an_{X^\nu})$ is a normal complex analytic space, its irreducible components are its connected components and they are pure dimensional, so the same happens with $X^{\nu\,\R}$. By Lemma \ref{normal88} the ring $\an_{X^\nu,x}^\R$ is normal for each $x\in X^{\nu\,\R}$, so the germ $X^{\nu\,\R}_x$ is irreducible for each $x\in X^{\nu\,\R}$. By \cite[V.\S1.Prop.8]{n} the real analytic space $(X^{\nu\,\R},\an_{X^\nu}^\R)$ is coherent, so it is a reduced $C$-analytic space. 

Fix a point $x\in X^\R$. As $(X,\an_X)$ is reduced and $(X^\R,\an_{X}^\R)$ is coherent, $\an_{X,x}^\R=(\an_{X,x}^\R)^{rr}$. By Theorem \ref{localnormalization} the normalization of $\an_{X,x}^\R$ is $\an_{X^\nu,y_1}^\R\times\cdots\times\an_{X^\nu,y_r}^\R$ where $\pi^{-1}(x):=\{y_1,\ldots,y_r\}$. Consequently,
\begin{multline}\label{clue}
\ol{\an_{\widesim{X^\R},x}}^\nu\cong\ol{\an_{X,x}^\R\otimes_\R\C}^\nu\cong\ol{\an_{X,x}^\R}^\nu\otimes_\R\C\\
\cong(\an_{X^\nu,y_1}^\R\otimes_\R\C)\times\cdots\times(\an_{X^\nu,y_r}^\R\otimes_\R\C)\cong\an_{\widesim{X^{\nu\,\R}},y_1}\times\cdots\times\an_{\widesim{X^{\nu\,\R}},y_r}.
\end{multline}
Let $\widesim{\pi^\R}:\widesim{X^{\nu\,\R}}\to\widesim{X^\R}$ be a complexification of $\pi^\R$. By \eqref{clue} we deduce $(\widesim{\pi^\R})^{-1}(X)=X^\nu$. By Lemma \ref{neighcomplex} we may assume that $\widesim{\pi^\R}:\widesim{X^{\nu\,\R}}\to\widesim{X^\R}$ is proper and surjective. By \cite[VI.\S2.Thm. 3]{n} the set of points at which a reduced complex analytic space is normal is an open set, so we may assume that $\widesim{X^{\nu\,\R}}$ is a normal complex analytic space. By \cite[6.1.8]{jp}
$$
\widesim{\pi^\R}_*(\an_{\widesim{X^{\nu\,\R}}})_x\cong\an_{\widesim{X^{\nu\,\R}},y_1}\times\cdots\times\an_{\widesim{X^{\nu\,\R}},y_r}
$$
if $x\in\widesim{X^\R}$ and $(\widesim{\pi^\R})^{-1}(x):=\{y_1,\ldots,y_r\}$. If $x\in X^\R$, we deduce $\widesim{\pi^\R}_*(\an_{\widesim{X^{\nu\,\R}}})_x\cong\ol{\an_{\widesim{X^\R},x}}^\nu$.

Let $(Z,\rho)$ be the normalization of $(\widesim{X^\R},\an_{\widesim{X^\R}})$. By \cite[6.1.8]{jp} $\rho_*(\an_{Z})_x=\ol{\an_{\widesim{X^\R},x}}^\nu$. Thus, the coherent sheaves $\widesim{\pi^\R}_*(\an_{\widesim{X^{\nu\,\R}}})$ and $\rho_*(\an_{Z})$ coincide at the points of $X^\R$. By \cite[\S3.Prop. 2]{c2} we may assume shrinking $\widesim{X^\R}$ if necessary that $\widesim{\pi^\R}_*(\an_{\widesim{X^{\nu\,\R}}})=\rho_*(\an_{Z})$ on $\widesim{X^\R}$. If $x\in\widesim{X^\R}\setminus\Sing(\widesim{X^\R})$, then $\rho^{-1}(x):=\{z\}$ is a singleton and
$$
\an_{Z,z}\cong\rho_*(\an_{Z})_x=\ol{\an_{\widesim{X^\R},x}}^\nu\cong\an_{\widesim{X^\R},x}.
$$
On the other hand,
$$
\an_{Z,z}\cong\rho_*(\an_{Z})_x=\widesim{\pi^\R}_*(\an_{\widesim{X^{\nu\,\R}}})_x\cong\an_{\widesim{X^{\nu\,\R}},y_1}\times\cdots\times\an_{\widesim{X^{\nu\,\R}},y_r}
$$
if $(\widesim{\pi^\R})^{-1}(x):=\{y_1,\ldots,y_r\}$. As $\an_{Z,z}$ is an integral domain, we have $r=1$ and $\widesim{\pi^\R}_*(\an_{\widesim{X^{\nu\,\R}}})_x\cong\an_{\widesim{X^{\nu\,\R}},y_1}\cong\an_{Z,z}\cong\an_{\widesim{X^\R},x}$. Thus, the restriction 
\begin{equation*}
\widesim{\pi^\R}|:\widesim{X^{\nu\,\R}}\setminus(\widesim{\pi^\R})^{-1}(\Sing(\widesim{X^\R}))\to\widesim{X^\R}\setminus\Sing(\widesim{X^\R})
\end{equation*}
is a holomorphic diffeomorphism. Consequently, $(\widesim{X^{\nu\,\R}},\widesim{\pi^\R})$ is isomorphic to the normalization $(Z,\rho)$ of $(\widesim{X^\R},\an_{\widesim{X^\R}})$.

(iii) $\Longrightarrow$ (i) Let $X_i$ be an irreducible component of $X$ and let $X^\nu_i$ be the connected component of $X^\nu$ that satisfies $\pi(X^\nu_i)=X_i$. It holds that $(X^\nu_i,\an_{X^\nu}|_{X^\nu_i},\pi|_{X^\nu_i})$ is the normalization of $(X_i,\an_X|_{X_i})$. Thus, we may assume $X$ is irreducible. Consequently, its normalization $X^\nu$ is connected and both $X^{\nu\,\R}$ and $X^\R$ are irreducible \cite[\S5]{fe1}. By \cite[IV.\S1.Cor. 3]{n} both $X^{\nu\,\R}$ and $X^\R$ are pure dimensional. As $(\widesim{X^{\nu\,\R}},\widesim{\pi_\R},\widesim{\pi^\R})$ is the normalization of $(\widesim{X^\R},\an_{\widesim{X^\R}})$ (after shrinking $\widesim{X^{\nu\,\R}}$ and $\widesim{X^\R}$ if needed) and the sets $\widesim{X^{\nu\,\R}}$ and $\widesim{X^\R}$ are considered initially as `narrow' as needed around $X^{\nu\,\R}$ and $\ol{X}^\R$, we deduce $\widesim{\pi_\R}^{-1}(X^\R)=X^{\nu\,\R}$. Consequently, $X^\R$ is by \cite[IV.3.13]{gmt} coherent, as required.
\end{proof}

\end{document}